\theoremstyle{plain}
\newtheorem{theorem}{Theorem}
\newtheorem{lemma}[theorem]{Lemma}
\newtheorem{proposition}[theorem]{Proposition}
\newtheorem{corollary}[theorem]{Corollary}
\theoremstyle{definition}
\newtheorem{definition}[theorem]{Definition}
\newtheorem{example}[theorem]{Example}
\theoremstyle{remark}
\newtheorem{remark}[theorem]{Remark}
\numberwithin{equation}{section}
\DeclareMathOperator{\N}{\mathbb{N}}
\DeclareMathOperator{\Q}{\mathbb{Q}}
\DeclareMathOperator{\C}{\mathbb{C}}
\DeclareMathOperator{\GL}{GL}
\DeclareMathOperator{\SL}{SL}
\DeclareMathOperator{\PSL}{\mathbb{P}SL}
\DeclareMathOperator{\Aff}{\mathbb{A}}
\DeclareMathOperator{\Prj}{\mathbb{P}}
\DeclareMathOperator{\im}{Im}
\DeclareMathOperator{\Hom}{Hom}
\DeclareMathOperator{\End}{End}
\DeclareMathOperator{\Ext}{Ext}
\DeclareMathOperator{\Mod}{Mod}
\DeclareMathOperator{\Coh}{Coh}
\DeclareMathOperator{\Per}{Per}
\DeclareMathOperator{\Db}{D^b}
\newcommand{\Dba}[1]{\mathrm{D}_{#1}^{\mathrm{b}}}
\newcommand*{\SHom}{\mathcal{H}\kern -.5pt om}
\newcommand*{\SEnd}{\mathcal{E}\kern -.5pt nd}
\newcommand*{\SExt}{\mathcal{E}\kern -.5pt xt}
\DeclareMathOperator{\Sym}{Sym}
\DeclareMathOperator{\Spec}{Spec}
\DeclareMathOperator{\rank}{rank}
\DeclareMathOperator{\Stab}{Stab}
\DeclareMathOperator{\Reg}{\mathcal{O}}
\DeclareMathOperator{\Pic}{Pic}
\DeclareMathOperator{\univ}{univ}
\DeclareMathOperator{\Rep}{Rep}
\DeclareMathOperator{\Hilb}{Hilb}
\newcommand{\GHilb}[1]{\mathop{\mbox{${#1}$-$\Hilb$}}}
\DeclareMathOperator{\Quot}{Quot}
\DeclareMathOperator{\NQV}{\mathfrak{M}}
\DeclareMathOperator{\Sing}{\mathbb{A}^2\!/\Gamma}
\newcommand{\Singa}[1]{\mathbb{A}^2\!/{#1}}
\DeclareMathOperator{\Ealg}{\mathcal{E}}
\DeclareMathOperator{\Sstack}{\mathscr{S}}
\DeclareMathOperator{\maxid}{\mathfrak{m}}
\DeclareMathOperator{\Lineb}{\mathcal{L}}
\DeclareMathOperator{\Fb}{\mathcal{F}}
\definecolor{burntorange}{rgb}{0.8, 0.33, 0.0}
\definecolor{burntumber}{rgb}{0.54, 0.2, 0.14}
\newcommand{\one}{\ensuremath{(\mathrm{i})}}
\newcommand{\two}{\ensuremath{(\mathrm{ii})}}
\newcommand{\three}{\ensuremath{(\mathrm{iii})}}
\newcommand{\CC}{\ensuremath{\mathbb{C}}} 
\newcommand{\NN}{\ensuremath{\mathbb{N}}} 
\newcommand{\QQ}{\ensuremath{\mathbb{Q}}}
\newcommand{\ZZ}{\ensuremath{\mathbb{Z}}} 
\newcommand{\M}{\ensuremath{\mathfrak{M}}} 
\newcommand{\Oo}{\ensuremath{\mathcal{O}}}
\newcommand{\git}{\ensuremath{\operatorname{\!/\!\!/\!}}}
\newcommand{\head}{\operatorname{h}}
\newcommand{\ltensor}{\overset{\mathbf{L}}{\otimes}}
\newcommand{\Mov}{\operatorname{Mov}}
\newcommand{\Nef}{\operatorname{Nef}}
\newcommand{\relint}{\operatorname{relint}} 
\newcommand{\Seshadri}{\operatorname{S}}
\newcommand{\tail}{\operatorname{t}}
\newcommand{\RHom}{\operatorname{\mathbf{R}Hom}}
\title{Stacky Resolutions of Kleinian Singularities and Nakajima Quiver Varieties}
\author{Lukas Bertsch}
\address{
Faculty of Mathematics, 
University of Vienna, 
Oskar-Morgenstern-Platz 1, 1090 Vienna, 
Austria}
\email{lukas.bertsch@univie.ac.at}
\author{Ruth Wye}
\address{Department of Mathematical Sciences,
University of Bath,
Claverton Down,
Bath,
BA2 7AY,
England}
\email{rep46@bath.ac.uk}
\begin{document}

\begin{abstract}
    We construct a class of noncommutative crepant resolutions of any Kleinian singularity as noncommutative sheaves of algebras over its crepant partial resolutions. We argue that such resolutions are Morita equivalent to the canonical orbifold resolutions of the partial resolutions. Further, we study Hilbert schemes of points on both the crepant partial and noncommutative resolutions and show that they are Nakajima quiver varieties. Finally, we describe the nef and movable cones of these Hilbert schemes of points using tautological bundles on the crepant partial resolutions.
\end{abstract}

\maketitle

\section*{Introduction}

Let $\Gamma < \SL(2,\C)$ be a finite subgroup, $\Sing$ be the associated Kleinian singularity, and \[f \colon S \to \Sing\] be its minimal resolution. The McKay correspondence \cite{mckay_graphs_80} establishes a relationship between the geometry of $S$ and the representation theory of $\Gamma$. One incarnation of this relationship \cite{kapranov_vasserot_kleinian-derived-hall_00} is an equivalence of derived categories \[\Db(S) \cong \Dba{\Gamma}(\Aff^2) \; .\] Here, the right-hand side is the bounded derived category of $\Gamma$-equivariant coherent sheaves on $\Aff^2$. Equivalently, this is the bounded derived category of coherent sheaves on the Deligne-Mumford stack $[\Sing]$, which, like $S$, is a crepant resolution of $\Sing$. Yet another crepant resolution is given in the sense of Van den Bergh \cite{van-den-bergh_nc-crepant_04} by the preprojective algebra $\Pi$, which is not just derived, but Morita equivalent to $[\Sing]$ \cite{reiten_van-den-bergh_two-dimensional-tame_89}: \[\Coh([\Sing]) \cong \Mod(\Pi) \; .\]

In this work, we study a larger family of crepant resolutions of $\Sing$ interpolating between $S$ and $[\Sing]$, respectively $\Pi$. We obtain these by first factoring $f$, as in the diagram below, through a partial resolution $S_K$, which we then resolve in a noncommutative or stacky way. \[\begin{tikzcd}[row sep = small] & S \arrow[dd,"f"] \arrow[ld,"h"'] \\ S_K \arrow[rd,"g"'] & \\ & \Sing \end{tikzcd}\] Here, the parameter is any subset $K \subseteq I\setminus\{0\}$, where $I$ is a set in bijection with the set of irreducible representations of $\Gamma$ and $0$ corresponds to the trivial representation.

After setting up some preliminary material, we introduce in section \ref{sec:nc-res} the sheaf of noncommutative algebras $h_* \SEnd(\mathcal{V})$ on $S_K$, where $\mathcal{V}$ is the tautological bundle on $S$ in the description of $S$ as the equivariant Hilbert scheme $\GHilb{\Gamma}(\Aff^2)$. This generalises the noncommutative resolution $\Pi$ since $\Pi = \End(\mathcal{V})$, to which $f_* \SEnd(\mathcal{V})$ is the associated sheaf on the affine variety $\Sing$. In section \ref{sec:stacky-res}, we introduce the DM stack $\Sstack_K$. Both $h_* \SEnd(\mathcal{V})$ and $\Sstack_K$ are crepant resolutions of $S_K$ and therefore of $\Sing$. We show the following equivalences.

\begin{theorem}[Theorem \ref{thm:derived-equivalence}, Corollary \ref{cor:nc-orbifold-equivalence}]\label{thm:intro:derived-equivalence}
    We have the following quasi-commutative diagram in which every functor is an equivalence of triangulated categories.
    \[\begin{tikzcd}
        \Db(S) \arrow[r] \arrow[d, "\mathcal{V} \otimes -"'] & \Db(\Sstack_K) \arrow[r] \arrow[d] & \Db([\Sing]) \arrow[d] \\ \Db(\SEnd(\mathcal{V})) \arrow[r, "Rh_*"] & \Db(h_*\SEnd(\mathcal{V})) \arrow[r, "Rg_*"] & \Db(\Pi)
    \end{tikzcd}\]
    Here, the vertical arrows preserve the natural t-structures, while the horizontal arrows modify them by a tilt.
\end{theorem}

The existence of equivalences in the top row follow from a theorem of Chen and Tseng \cite{chen_tseng_derived-mckay_08}, but here we establish them via the equivalences in the bottom row, which we prove following Van den Bergh \cite{van-den-bergh_flops-nc_04}. Given that, the least obvious part of Theorem \ref{thm:intro:derived-equivalence} is the Morita equivalence between $\Sstack_K$ and $h_* \SEnd(\mathcal{V})$, which we establish in Theorem \ref{thm:nc-orbifold-equivalence}.

In section \ref{sec:partial}, we also show the following analogue of Theorem \ref{thm:intro:derived-equivalence} for the partial resolutions.

\begin{theorem}[Theorem \ref{thm:partial}]\label{thm:intro:partial}
    We have the following equivalences of triangulated categories. \[\begin{tikzcd}
        \Db(S_K) \arrow[d, "\mathcal{T}_J \otimes -"'] & \\
        \Db(\SEnd(\mathcal{T}_J)) \arrow[r, "Rg_*"] & \Db(\Pi_J) \; .
    \end{tikzcd}\]
    Here, $\Pi_J$ is the preprojective algebra cornered in the subset $J \coloneqq I \setminus K$, $\mathcal{T}_J$ is the direct summand consisting of the $J$-components in $h_* \mathcal{V}$ and satisfies $\End(\mathcal{T}_J) \cong \Pi_J$, and $\SEnd(\mathcal{T}_J)$ is isomorphic to $h_*\SEnd(\mathcal{V})$ cornered in $J$. Again, the vertical arrow preserves the natural t-structure, while the horizontal arrow modifies it by a tilt.

    Furthermore, $S_K$ is a fine moduli space for $0$-generated $\Pi_J$-modules of dimension vector $\delta\vert_J$, and $\mathcal{T}_J$ is the universal family.
\end{theorem}

In section \ref{sec:hilbs}, we define the Hilbert schemes of points $\Hilb^n(\Sstack_K)$. In section \ref{sec:quots}, we define the schemes $\Quot_J^v$ parametrising certain zero-dimensional quotients of sheaves over $h_*\SEnd(\mathcal{V})$. We show that they are quasi-projective, smooth varieties and that the Morita equivalence between $\Sstack_K$ and $h_*\SEnd(\mathcal{V})$ gives rise to an isomorphism \[\Quot^{n \delta}_J \cong \Hilb^n(\Sstack_K) \; .\] In section \ref{sec:nqvs}, we introduce a class of Nakajima quiver varieties, which, originally due to Kronheimer \cite{kronheimer_ale-hk-quotients_89} and Nakajima \cite{nakajima_instantons_94}, are GIT quotients parametrising certain finite-dimensional framed $\Pi$-representations. We recall the VGIT wall-and-chamber structure described by Bellamy--Craw \cite{bellamy_craw_birational-symplectic_20}, and single out specific cones $C_K$ and $\sigma_K$. We prove the following.

\begin{theorem}[Theorem \ref{thm:isom-quot-nqv}]\label{thm:intro:isom-quot-nqv}
    For $\theta_K \in C_K$, and $\theta'_K$ in the relative interior of $\sigma_K$, we have the following commutative diagram in which the horizontal morphisms are isomorphisms and the vertical morphisms are resolutions of singularities. \[\begin{tikzcd} \Hilb^n(\Sstack_K) \arrow[r,"\sim"] \arrow[d] & \NQV_{\theta_K}(n \delta, \Lambda_0) \arrow[d] \\ \Hilb^n(S_K) \arrow[r, "\sim"] & \NQV_{\theta'_K}(n\delta, \Lambda_0) \end{tikzcd}\] Here, the horizontal maps are induced by the functors in Theorems \ref{thm:intro:derived-equivalence} and \ref{thm:intro:partial}, the morphism on the left is induced by pushforward of sheaves from $\Sstack_K$ to $S_K$, and the morphism on the right is a VGIT morphism.
\end{theorem}

In the statement above (and throughout the paper) we give $\Hilb^n(S_K)$ the underlying reduced structure instead of the natural scheme structure.\footnote{There is no known case where the natural scheme structure on $\Hilb^n(S_K)$ is non-reduced. In fact, it is known to be reduced for $n \leq 7$ \cite{craw_yamagishi_hilb-can_26}, and it seems reasonable to expect it is reduced in general, in which case the statement of Theorem \ref{thm:intro:isom-quot-nqv} would hold for the natural scheme structure. $\NQV_{\theta'_K}(n\delta, \Lambda_0)$, on the other hand, is known to be always reduced.} The bottom isomorphism was shown to exist by work of Craw with the second author \cite{craw_wye_hilb-crepant-partial_25-v2} using a combinatorial argument about the various GIT cones. Here we improve on the non-constructive proof and describe the morphism more explicitly. The isomorphism on top generalises the well-known isomorphisms $\GHilb{n\Gamma}(\Aff^2) \cong \NQV_{\theta_+}(n\delta,\Lambda_0)$ for $\theta_+ \in C_+$, and $\Hilb^n(S) \cong \NQV_{\theta_-}(n\delta,\Lambda_0)$ for $\theta_- \in C_-$ \cite{kuznetsov_quiver-hilb_07}. We deduce (Corollary \ref{cor:quot-from-nqv}) from existing wall-crossing results for Nakajima quiver varieties that the $\Hilb^n(\Sstack_K)$ are diffeomorphic and derived-equivalent for varying $K$. We also give the following application.

\begin{corollary}[Example \ref{ex:isom-eq-hilb-nqv}]\label{cor:normal-eq-hilb}
    Let $N \triangleleft \Gamma$ be a non-trivial, proper, normal subgroup. Let $Y$ be the minimal resolution of $\Singa{N}$, which naturally carries an action of $H \coloneqq \Gamma/N$. Then there exists a subset $K \subseteq I \setminus \{0\}$ such that, for $\theta_K \in C_K$, \[\GHilb{nH}(Y) \cong \NQV_{\theta_K}(n \delta, w) \; .\]
\end{corollary}

\noindent A special case of Corollary \ref{cor:normal-eq-hilb} is that of $N = \{\pm 1\}$, where we can identify $Y = T^{\vee}\!\Prj^1$. Here, $K$ is given explicitly in Example \ref{ex:morita-projective-mckay}.

Sections \ref{sec:geometric} and \ref{sec:proofsmooth} are dedicated to the proof of Theorem \ref{thm:intro:isom-quot-nqv}: In section \ref{sec:geometric}, we construct the isomorphism at the bottom by identifiying the coarse moduli space $\NQV_{\theta'_K}(n\delta, \Lambda_0)$ with a fine moduli space of framed modules over the cornered algebra $\Pi_J$. In section \ref{sec:proofsmooth}, we construct the isomorphism at the top and conclude the proof of Theorem \ref{thm:intro:isom-quot-nqv}. In the final section (section \ref{sec:cones}), we use the tautological bundles $\mathcal{T}_J$ to provide a description of the nef and moveable cones of $\Hilb^n(S_K)$ in terms of bundles on the surfaces $S_K$.

\subsection*{Acknowledgements}

The authors want to thank Alastair Craw, the second author's PhD advisor, for the insights he contributed to this project, as well as for his guidance and advice. The first author is grateful to his PhD advisors, Bal\'azs Szendr\H{o}i and \'Ad\'am Gyenge, for their support, and to them and S\o{}ren Gammelgaard for helpful comments on an earlier version of this paper. The second author would like to acknowledge the support of a PhD studentship awarded by the Heilbronn Institute for Mathematical Research, Grant number EP/V521917/1, from UKRI.

\numberwithin{theorem}{section}

\section{Noncommutative resolutions of Kleinian singularities}\label{sec:resolutions}

\subsection{Morita equivalence}

Throughout this paper, we will use various versions of Morita equivalence. We call two algebras Morita equivalent if their categories of finitely generated left modules are equivalent.

\begin{example}\label{ex:group-morita}
    An important instance of Morita equivalence is the following. Let $\Gamma$ be a finite group. Let $I$ be an index set in bijection $i \leftrightarrow \rho_i$ with the set of irreducible representations of $\Gamma$. Let $\delta \in \N^I$ be the vector of dimensions of the irreducible representations, that is, \[\delta_i \coloneqq \dim(\rho_i) \; .\] Then the group algebra $\C \Gamma$ is Morita equivalent to the semisimple commutative algebra $\C^I$, where the functor from $\Mod(\C^I)$ to $\Mod(\C \Gamma)$ is given by tensoring over $\C^I$ with the $(\C \Gamma,\C^I)$-bimodule $\bigoplus_{i \in I} \rho_i$, and the functor in the other direction is given by tensoring with the $(\C^I,\C \Gamma)$-bimodule $\bigoplus_{i \in I} \rho_i^{\vee}$. As a consequence, we have a natural isomorphism $\C \Gamma \cong \prod_{i \in I}\End_{\C}(\rho_i)$.
\end{example}

We extend the notion of Morita equivalence to sheaves of algebras on a given scheme: consider a coherent sheaf $\mathcal{A}$ on a Noetherian scheme $X$, equipped with the structure of an algebra with respect to the standard tensor product of quasi-coherent sheaves. Then there is a category $\Coh(\mathcal{A})$ of coherent sheaves on $X$ endowed with the structure of a left $\mathcal{A}$-module. We call two such sheaves of algebras $\mathcal{A}$ and $\mathcal{B}$ Morita equivalent if the categories $\Coh(\mathcal{A})$ and $\Coh(\mathcal{B})$ are equivalent.

The theorem below gives a general criterion for Morita equivalence of coherent sheaves of algebras in the presence of an $I$-grading. If $\mathcal{F} = \bigoplus_{i \in I} \mathcal{F}_i$ is any $I$-graded coherent sheaf on $X$, then $\mathcal{A} \coloneqq \SEnd_X(\mathcal{F})$ is a coherent sheaf of algebras on $X$ with idempotent global sections $e_i$ for $i \in I$ projecting $\mathcal{F}$ onto the summand $\mathcal{F}_i$. Accordingly, any sheaf $\mathcal{M}$ in $\Coh(\mathcal{A})$ decomposes as $\mathcal{M} = \bigoplus_{i \in I} e_i \mathcal{M}$. If its space of global sections is finite-dimensional, we call \[(\dim H^0(e_i\mathcal{M}))_{i \in I} \in \N^I\] the \emph{dimension vector} of $\mathcal{M}$. Analogously, we define the \emph{rank vector} of an $I$-graded locally-free sheaf of constant rank.

\begin{theorem}\label{thm:morita-general}
    Let $X$ be a Noetherian scheme, and let \[\mathcal{F} = \bigoplus_{i \in I} \mathcal{F}_i \quad \text{and} \quad \mathcal{G} = \bigoplus_{i \in I} \mathcal{G}_i\] be coherent sheaves graded by a fixed finite index set $I$. Let $a_i$ and $b_i$ be positive integers such that $(\mathcal{F}_i)^{a_i}$ is locally isomorphic to $(\mathcal{G}_i)^{b_i}$ for all $i \in I$. Define the coherent sheaves of $\C^I$-algebras \[\mathcal{A} \coloneqq \SEnd_X(\mathcal{F}) \quad \text{and} \quad \mathcal{B} \coloneqq \SEnd_X(\mathcal{G}) \; .\] Then there is a Morita equivalence \[\Coh(\mathcal{A}) \xleftrightharpoons[\; \Phi \;]{\; \Psi \;} \Coh(\mathcal{B}) \; ,\] where \[\Phi = \SHom_X(\mathcal{F}, \mathcal{G}) \otimes_{\mathcal{A}} - \quad \text{and} \quad \Psi = \SHom_X(\mathcal{G}, \mathcal{F}) \otimes_{\mathcal{B}} - \]
    
    Furthermore, if an object $\mathcal{M}$ of $\Coh(\mathcal{A})$ is supported on finitely many $\C$-points and has dimension vector $(l_i)_{i \in I} \in \N^I$, then the corresponding object of $\Coh(\mathcal{B})$ has dimension vector $(l_ia_i/b_i)_{i \in I}$. (In particular, $l_ia_i/b_i \in \N$ for all $i \in I$.) If $\mathcal{M}$ is locally-free of constant rank, its rank vector transforms in the same way.
\end{theorem}
\begin{proof}
    In order to show that $\Phi$ and $\Psi$ are quasi-inverse to each other and therefore define an equivalence, we show that, under the assumptions on the $\mathcal{F}_i$ and $\mathcal{G}_i$, the natural homomorphisms \[\SHom_X(\mathcal{F}, \mathcal{G}) \otimes_{\mathcal{A}} \SHom_X(\mathcal{G}, \mathcal{F}) \xrightarrow{\sim} \mathcal{B} \quad \text{and} \quad \SHom_X(\mathcal{G}, \mathcal{F}) \otimes_{\mathcal{B}} \SHom_X(\mathcal{F}, \mathcal{G}) \xrightarrow{\sim} \mathcal{A}\] defined by composition are isomorphisms. This can be checked locally as follows. Let $R$ be a commutative ring and $F = \bigoplus_i F_i$, $G = \bigoplus_i G_i$ be $I$-graded finitely generated $R$-modules such that $F_i^{a_i} \cong G_i^{b_i}$ for all $i \in I$. Let $A \coloneqq \End_R(F)$, and $B \coloneqq \End_R(G)$. Let $H \coloneqq \oplus_i F_i^{a_i} \cong \oplus_i G_i^{b_i}$, and $C \coloneqq \End_R(H)$. Then, any choice of projection from $H$ onto its summands $F$ and $G$ defines full idempotents $e_F, e_G \in C$ (\emph{full} meaning that $Ce_FC=C$ and $Ce_GC=C$) with $F \cong e_F H$ and $G \cong e_G H$. Then $A \cong e_F C e_F$, $B \cong e_G C e_G$, and \[\Hom_R(F,G) \otimes_A \Hom_R(G,F) \cong e_G C e_F \otimes_{e_F C e_F} e_F C e_G \cong e_GCe_G \cong B ,\] where the second isomorphism is proved in the same way as, e.g., in \cite[Proof of Cor.\ 2.3.4]{ginzburg_nc-geometry_05}. The same of course holds also with the roles of $F$ and $G$ reversed.

    To see how this equivalence transforms dimensions and ranks as stated, note that we have isomorphisms of locally defined right $e_F C e_F$-modules \[(e_{G_i}Ce_F)^{b_i} \cong e_iCe_F \cong (e_{F_i}Ce_F)^{a_i} \; .\] Hence, the $i$-th component of the image of a sheaf $\mathcal{M} \in \Coh(\mathcal{A})$ under the Morita equivalence is locally isomorphic to one of $b_i$ mutually isomorphic direct summands in the $a_i$-fold direct sum $(e_i\mathcal{M})^{a_i}$.
\end{proof}

\subsection{Kleinian singularities}

We work over $\C$. Let $\Gamma < \SL(2)$ be a finite subgroup. Let $\rho_{\text{given}}$ be its given two-dimensional representation. Let $u,v$ be the standard coordinates on $\Aff^2$. The quotient \[\Sing = \Spec(\C[u,v]^{\Gamma})\] has an isolated singularity at the image $o \in \Sing$ of the origin in $\Aff^2$ known as a \emph{Kleinian singularity}.

As in Example \ref{ex:group-morita}, let $I$ be an index set in bijection with the irreducible representations $\{\rho_i\}_{i \in I}$ of $\Gamma$. Furthermore, we fix a distinguished element $0 \in I$ corresponding to the trivial one-dimensional representation $\rho_0$. We denote the $i$-th idempotent in both $\C \Gamma$ and $\C^I$ by $e_i$.

The \emph{skew-group algebra} associated to the action of $\Gamma$ on $\Aff^2$, denoted by $\C[u,v] \rtimes \Gamma$, is constructed in such a manner that the category of left $\C[u,v] \rtimes \Gamma$-modules is equivalent to the category of $\Gamma$-equivariant quasicoherent sheaves on $\Aff^2$. The skew-group algebra contains $\C[u,v]$ and $\C \Gamma$ as subalgebras and is generated by them. Its center is $\C[u,v]^{\Gamma}$, the coordinate ring of $\Sing$.

The space $\C u \oplus \C v$ of linear functions on $\Aff^2$, as a representation of $\Gamma$, is dual (but isomorphic) to the given representation $\rho_{\text{given}}$. The \emph{McKay quiver} associated to this representation $\Gamma$ has vertex set $I$ and the number of arrows from $i \in I$ to $j \in I$ equals the multiplicity of $\rho_i$ in $\rho_{\text{given}}^{\vee} \otimes \rho_j$. The following was observed by McKay \cite{mckay_graphs_80}.

\begin{theorem}\label{thm:mckay-quiver}
    The McKay quiver associated to $\Gamma < \SL(2)$ is the doubled quiver of an extended ADE Dynkin diagram. This establishes a bijection between the finite subgroups of $\SL(2)$ up to conjugation and ADE Dynkin diagrams.
\end{theorem}

We now define the \emph{preprojective algebra} associated to $\Gamma$ (that is, the preprojective algebra associated to the extended Dynkin diagram). Take $E$ to be the set of arrows in the McKay quiver. For any $h \in E$, let $\Bar{h}$ denote the arrow in the opposite orientation. Now choose a subset $H \subset E$ of arrows such that $E = H \sqcup \Bar{H}$. The preprojective algebra $\Pi$ is the quotient of the path algebra of the McKay quiver by the two-sided ideal generated by the element \[\sum_{h \in H} [h,\Bar{h}] \; .\] The preprojective algebra inherits the structure of a graded $\C^I$-algebra. Furthermore, it is independent up to isomorphism of the choice of orientation. The following was shown in \cite[Propositions 2.4, 2.13]{reiten_van-den-bergh_two-dimensional-tame_89} (see also \cite[Theorem 0.1]{crawley-boevey_holland_nc-deformations-kleinian_98}).

\begin{theorem}\label{thm:morita-skew-preproj}
    The Morita equivalence between bimodules over $\C \Gamma$ and $\C^I$ induces Morita equivalence between $\C[u,v] \rtimes \Gamma$ and $\Pi$. Hence, we have equivalences \[\Coh_{\Gamma}(\Aff^2) \cong \Mod(\C[u,v] \rtimes \Gamma) \cong \Mod(\Pi) \; .\]
\end{theorem}

\subsection{The minimal resolution and the tautological vector bundle}

We have the following well-known description of the minimal resolution of $\Sing$ \cite[Theorem 1.3]{ito_nakamura_mckay-hilbert_96}.

\begin{theorem}
    Let $S \coloneqq \GHilb{\Gamma}(\Aff^2)$ be the Hilbert scheme of $\Gamma$-invariant zero-dimensional subschemes of $\Aff^2$ whose coordinate ring, as a $\Gamma$-representation, is isomorphic to the regular representation. There is a projective morphism \[ f \colon S \longrightarrow \Sing\] which maps a $\Gamma$-invariant subscheme of $\Aff^2$ to the image of its support under the quotient map $\pi \colon \Aff^2 \to \Sing$. The morphism $f$ is a minimal, crepant resolution of the singularity on $\Sing$.
\end{theorem}

The description of $S$ as a Hilbert scheme gives rise to a universal subscheme $\mathcal{Z} \subset S \times \Aff^2$ that fits into the following commutative diagram. \begin{equation}\label{eq:bkr-square}\begin{tikzcd}
    \mathcal{Z} \arrow[r, "q"] \arrow[d, "p"'] & \Aff^2 \arrow[d, "\pi"] \\ S \arrow[r, "f"] & \Sing 
\end{tikzcd}\end{equation}
Here, $\Gamma$ acts on $\mathcal{Z}$ in such a way that $q$ is $\Gamma$-equivariant, $p$ and $\pi$ are finite and $\Gamma$-invariant, and $p$ is flat. Furthermore, the variety $\Sing$ is normal, which implies \[f_* \Reg_S = \Reg_{\Sing} \; ,\] since the natural homomorphism from right to left corresponds to an integral extension of rings with the same fraction field.

Flatness and finiteness of $p$ implies that $p_* \Reg_{\mathcal{Z}}$ is a vector bundle on $S$. It is naturally equipped with a linear action by $\Gamma$ that is fibre-wise isomorphic to the regular representation. We obtain an isotypic decomposition \[p_*\Reg_{\mathcal{Z}} = \bigoplus_{i \in I} \rho_i \otimes \mathcal{V}_i \; , \quad \text{where} \quad \mathcal{V}_i \coloneqq \Hom_{\Gamma}(\rho_i, p_*\Reg_{\mathcal{Z}}) \; .\] Define $\mathcal{V} \coloneqq \bigoplus_{i \in I} \mathcal{V}_i$. We will refer to $\mathcal{V}$ and the $\mathcal{V}_i$ as \emph{tautological bundles}. Note that $p_* \Reg_{\mathcal{Z}}$ and $\mathcal{V}$ correspond to one another via the Morita equivalence between $\C \Gamma$ and $\C^I$.

\begin{proposition}[\cite{gonzales-sprinberg_verdier_construction_83}]\label{prop:tautb-basics}
    For each $i \in I$, $\mathcal{V}_i$ is a globally generated vector bundle of rank $\delta_i$. Furthermore, $\mathcal{V}_0 \cong \Reg_S$.
\end{proposition}
\begin{proof}
    The assertion $\rank(\mathcal{V}_i) = \delta_i$ follows from the fact that $\rho_i$ appears $\delta_i$ times in the regular representation of $\Gamma$. Second, since $\mathcal{Z}$ is a $\Gamma$-invariant subscheme of $S \times \Aff^2$ we have a $\Gamma$-equivariant surjection \[\psi \colon \Reg_S[u,v] \to p_* \Reg_{\mathcal{Z}}\] of sheaves of algebras on $S$. Since $\Reg_S[u,v]$ as a quasi-coherent sheaf is a direct sum of structure sheaves, this implies that $p_* \Reg_{\mathcal{Z}}$ is globally generated, and so are its direct summands $\mathcal{V}_i$. The section $\psi(1)$ is $\Gamma$-invariant and nowhere vanishing, so it spans $\mathcal{V}_0$.
\end{proof}

\begin{proposition}\label{prop:skew-preproj-as-end}
    We have canonical isomorphisms \[\End_S(p_* \Reg_{\mathcal{Z}}) \cong \C[u,v] \rtimes \Gamma \; , \quad \text{and} \quad \End_S(\mathcal{V}) \cong \Pi \; .\]
\end{proposition}
\begin{proof}
    The first isomorphism was defined and shown to be an isomorphism in \cite[Proposition 1.5]{kapranov_vasserot_kleinian-derived-hall_00}. It restricts to the identity on $\C \Gamma$, which is naturally contained as a subalgebra in both $\End_S(p_* \Reg_{\mathcal{Z}})$ and $\C[u,v] \rtimes \Gamma$. Hence, the first isomorphism reduces to the second via the Morita equivalence of Theorem \ref{thm:morita-skew-preproj}. 
\end{proof}

We define the sheaf of non-commutative algebras \[\Ealg \coloneqq \SEnd_S(\mathcal{V}) \; .\] By Proposition \ref{prop:skew-preproj-as-end}, we have an isomorphism $H^0(\Ealg) \cong \Pi$. Note that $\Ealg$ also contains the idempotents $e_i$ as global sections, and that we can identify $\mathcal{V} = \Ealg\!e_0$ and $\mathcal{V}^{\vee} = e_0\!\Ealg$.

We will also need the following important theorem \cite[Paragraph below Proposition 1.1]{auslander_purity_62}.

\begin{theorem}[Auslander]\label{thm:auslander}
    The natural homomorphism of $\C[u,v]^{\Gamma}$-algebras \[\C[u,v] \rtimes \Gamma \longrightarrow \End_{\C[u,v]^{\Gamma}}(\C[u,v])\] is an isomorphism.
\end{theorem}

\subsection{The complete-local setting}

Consider the completions \[\widehat{\Sing} \coloneqq \Spec(\C[[u,v]]^{\Gamma}) \quad \text{and} \quad \hat{S} \coloneqq S \times_{\Sing} \widehat{\Sing} \; ,\] and denote the base change of $f$ by \[\hat{f} \colon \hat{S} \to \widehat{\Sing} \; .\] Some aspects of the McKay correspondence have been developed in this complete-local setting, most notably Artin--Verdier theory (see \cite{artin_verdier_reflexive_85,van-den-bergh_flops-nc_04}, and Theorem \ref{thm:mckay}, items (\ref{thm:mckay:2}) and (\ref{thm:mckay:3}) below), which will play an important role in this paper.

Since $\widehat{\Sing}$ is flat over $\Sing$, and since flat base change commutes with blow-up \cite[\href{https://stacks.math.columbia.edu/tag/085S}{Tag 085S}]{stacks-project}, $\hat{f}$ can be obtained from $\widehat{\Sing}$ by iterated blow-up and is a minimal resolution. Furthermore, restriction (i.e., pullback) to the completions is subject to flat base change in cohomology \cite[Proposition III.9.3]{hartshorne_ag_77}. This says that for any coherent sheaf $\Fb$ on $S$, the natural homomorphism \[(R^k f_* \Fb)|_{\widehat{\Sing}} \longrightarrow R^k \hat{f}_*(\Fb|_{\hat{S}})\] is an isomorphism. In particular, $R^k f_* \Fb$ vanishes at $o$ if and only if $R^k \hat{f}_*(\Fb|_{\hat{S}})$ vanishes. We will use this result on multiple occasions to compare between the complete-local and the finite-type setting. We will also need the following elementary lemma.

\begin{lemma}\label{lem:sum-extension-formal}
    Let $Y$ be a Noetherian scheme, $y \in Y$ be a point, and $\hat{Y}$ be the completion of $Y$ at $y$. Let $\mathcal{V}$ and $\mathcal{W}$ be coherent sheaves on $Y$ such that there is an isomorphism \[\mathcal{V}|_{\hat{Y}} \cong \mathcal{W}|_{\hat{Y}} \; .\] Then there is an open neighbourhood $U$ of $y$ in $Y$ such that \[\mathcal{V}|_U \cong \mathcal{W}|_U\; .\]
\end{lemma}
\begin{proof}
    The completion $\hat{Y}$ is flat over $Y$, and flat pullback commutes with $\SHom$ \cite[Proof of Proposition 1.8]{hartshorne_reflexive_80}, so we have \[\SHom_{\hat{Y}}(\mathcal{V}|_{\hat{Y}}, \mathcal{W}|_{\hat{Y}}) = \SHom_Y(\mathcal{V},\mathcal{W})|_{\hat{Y}} \; .\] The given isomorphism is an element of this space, and on the right-hand side, over any affine open neighbourhood of $y$, we can find a section $\phi$ of $\SHom_Y(\mathcal{V},\mathcal{W})$ that is equal to our isomorphism modulo the maximal ideal at $y$. This means that $\phi$ is an isomorphism on the fibres at $y$, so it is surjective on an open neighbourhood of $y$ by Nakayama's lemma.

    Repeating the same argument with the roles of $\mathcal{V}$ and $\mathcal{W}$ reversed, we obtain a section $\psi$ of $\SHom_Y(\mathcal{W},\mathcal{V})$
    on an open neighbourhood of $y$ that is surjective and equals the given isomorphism on the fibre over $y$. We pick an affine open neighbourhood $U$ of $y$ on which $\phi$ and $\psi$ are defined and surjective. Then $\psi|_U \circ \phi|_U$ is a surjective endomorphism of $\mathcal{V}|_U$. From the well-known fact that a surjective endomorphism of a finitely-generated module is injective we deduce that $\psi|_U \circ \phi|_U$ is an isomorphism. In particular, $\phi|_U$ is injective and hence an isomorphism.
\end{proof}

\subsection{The McKay correspondence}

Let $\Db(S)$, $\Db(\Pi)$ and $\Dba{\Gamma}(\Aff^2)$ denote the bounded derived categories of coherent sheaves on $S$, finitely generated left $\Pi$-modules, and $\Gamma$-equivariant coherent sheaves on $\Aff^2$, respectively. The following statement brings together the results of \cite{mckay_graphs_80, artin_verdier_reflexive_85, kapranov_vasserot_kleinian-derived-hall_00, bkr_mckay-derived_01, van-den-bergh_flops-nc_04}.

\begin{theorem}[McKay correspondence]\label{thm:mckay}
    \
    \begin{enumerate}
        \item\label{thm:mckay:1} The underlying reduced subschemes of irreducible components of the exceptional fibre of $f$ are curves $C_i \cong \Prj^1$ in bijection with the set $I \setminus \{0\}$. For $i,j \in I \setminus \{0\}$, $C_i$ and $C_j$ intersect in at most one point, and they intersect if and only if $i$ and $j$ are adjacent in the McKay quiver. Furthermore, \[\deg(\mathcal{V}_i|_{C_j}) = \delta_{i,j} \; .\]
        
        \item\label{thm:mckay:2} The $\mathcal{V}_i|_{\hat{S}}$ are precisely the indecomposable globally generated vector bundles on $\hat{S}$ whose duals have vanishing $H^1$.
        
        \item\label{thm:mckay:3} The $f_* \mathcal{V}_i|_{\widehat{\Sing}}$, which are the sheaves associated to the modules $\Hom_{\Gamma}(\rho_i, \C[[u,v]])$ over $\C[[u,v]]^{\Gamma}$, are precisely the indecomposable reflexive coherent sheaves on $\widehat{\Sing}$.
        
        \item\label{thm:mckay:4} Following Proposition \ref{prop:skew-preproj-as-end}, we identify $\End(\mathcal{V}) = \Pi$. The functor \[\Phi \coloneqq \RHom(\mathcal{V}^{\vee},-) \colon \Db(S) \longrightarrow \Db(\Pi)\] is an equivalence of triangulated categories with quasi-inverse $\Psi \coloneqq \mathcal{V}^{\vee} \ltensor_{\Pi} -$.
    \end{enumerate}
\end{theorem}

Combined with the Morita equivalence from Proposition \ref{thm:morita-skew-preproj}, Theorem \ref{thm:mckay}(\ref{thm:mckay:4}) gives a derived equivalence \[\Db(S) \xlongrightarrow{\sim} \Dba{\Gamma}(\Aff^2) \; .\] This is isomorphic to the derived equivalence originally established in \cite{kapranov_vasserot_kleinian-derived-hall_00,bkr_mckay-derived_01}.

\begin{corollary}\label{cor:vanishing}
    For all $i,j \in I$ and $k \geq 1$ we have \[H^k(\SHom_S(\mathcal{V}_i, \mathcal{V}_j)) = 0 \; .\] In particular, $H^1(\Reg_S)=0$, $H^1(\Ealg) = 0$, $H^1(\mathcal{V}_i)=0$, and $H^1(\mathcal{V}^{\vee}_i)=0$ for all $i \in I$.
\end{corollary}
\begin{proof}
    Since $\Sing$ is affine, we have $H^k(\Fb) = H^0(R^kf_* \Fb)$ for any quasi-coherent sheaf $\Fb$ on $S$. This vanishes for $k \geq 2$ because $f$ has fibres of dimension $\leq 1$.
    
    It remains to show that $H^1(\SHom_S(\mathcal{V}_i, \mathcal{V}_j)) = 0$. Since $\mathcal{V}_j$ is globally generated, we have a short exact sequence of locally-free sheaves \[0 \to \mathcal{K} \to \Reg_S^N \to \mathcal{V}_j \to 0 \; .\] Applying $\SHom_S(\mathcal{V}_i,-)$ to this sequence, we obtain \[0 \to \SHom_S(\mathcal{V}_i,\mathcal{K}) \to (\mathcal{V}_i^{\vee})^N \to \SHom_S(\mathcal{V}_i, \mathcal{V}_j) \to 0 \; .\] 
    
    We have $H^1(\mathcal{V}_i^{\vee}) = H^0(R^1 f_* \mathcal{V}_i^{\vee}) = 0$ by Theorem \ref{thm:mckay}(\ref{thm:mckay:2}) and flat base change to the completion, so from the long exact sequence in cohomology and  $H^2(\SHom_S(\mathcal{V}_i,\mathcal{K}))=0$ we deduce that $H^1(\SHom_S(\mathcal{V}_i, \mathcal{V}_j)) = 0$.
\end{proof}

\begin{remark}
    In \cite{kapranov_vasserot_kleinian-derived-hall_00}, the authors argue that Corollary \ref{cor:vanishing} holds using merely the degrees of the tautological bundles along the exceptional curves. This is not possible, however, and there is a gap in their proof at the end of Remark 2.1. Indeed, Van den Bergh gives a counterexample in \cite[Theorem B]{van-den-bergh_flops-nc_04}, as he shows that in types D and E there exists a line bundle $\Lineb$ on $S$ such that there is some $i \in I$ with $\deg(\Lineb|_{C_j}) = \delta_{i,j}$ but $H^1(\Lineb^{\vee}) \neq 0$. (Van den Bergh shows this non-vanishing in the complete-local setting, but the same is true on the finite-type resolution by flat base change.) For this reason we circumvent the line of argument in \cite{kapranov_vasserot_kleinian-derived-hall_00} and instead deduce Corollary \ref{cor:vanishing} from Theorem \ref{thm:mckay}(\ref{thm:mckay:2}) which is part of the version of Artin--Verdier theory developed in \cite{van-den-bergh_flops-nc_04}.
\end{remark}

\noindent We will also later need the following Lemma, which is proved in the complete-local setting.

\begin{lemma}\label{lem:double-sequence}
    Let $i \in I$. There is a short exact sequence \[0 \to \mathcal{V}_i^{\vee}|_{\hat{S}} \to \Reg_{\hat{S}}^{2 \delta_i} \to \mathcal{V}_i|_{\hat{S}} \to 0 \; .\]
\end{lemma}
\begin{proof}
    For the rest of this proof we shall write $\mathcal{V}_i$ instead of $\mathcal{V}_i|_{\hat{S}}$ and the same for its dual. According to \cite[Lemma 3.5.1]{van-den-bergh_flops-nc_04}, taking $\delta_i - 1$ respectively $\delta_i + 1$ generic sections of $\mathcal{V}_i$ one obtains short exact sequences 
    \begin{equation}\label{eq:ses1}
       0 \to \Reg_{\hat{S}}^{\delta_i - 1} \to \mathcal{V}_i \to \Lineb_i \to 0
    \end{equation}
    and
    \begin{equation}\label{eq:ses2}
       0 \to \Lineb^{\vee}_i \to \Reg_{\hat{S}}^{\delta_i + 1} \to \mathcal{V}_i \to 0
    \end{equation}
    where $\Lineb_i = \det(\mathcal{V}_i)$ is a line bundle.

    Consider the following pushout, where $\iota$ appears in (\ref{eq:ses2}), and $\kappa$ appears in the dual of (\ref{eq:ses1}).
    \[\begin{tikzcd}
        \Lineb^{\vee}_i \arrow[r, "\iota"]\arrow[d , "\kappa"] & \Reg_{\hat{S}}^{\delta_i + 1} \arrow[d, "\lambda"] \\
        \mathcal{V}_i^{\vee} \arrow[r, "\mu"] & \mathcal{M}
    \end{tikzcd}\]
    Since $\iota$ and $\kappa$ are both inclusions of constant rank, their pushout $\mathcal{M}$ is also a vector bundle. Furthermore, the cokernels of $\iota$ and $\mu$ are isomorphic, and so are the cokernels of $\kappa$ and $\lambda$. Hence, we have short exact sequences
    \begin{equation}\label{eq:ses3}
       0 \to \mathcal{V}_i^{\vee} \xrightarrow{\mu} \mathcal{M} \to \mathcal{V}_i \to 0
    \end{equation}
    and 
    \begin{equation}\label{eq:ses4}
       0 \to \Reg_{\hat{S}}^{\delta_i + 1} \xrightarrow{\lambda} \mathcal{M} \to \Reg_{\hat{S}}^{\delta_i - 1} \to 0.
    \end{equation}
    Since $\Ext^1(\Reg_{\hat{S}}^{\delta_i - 1}, \Reg_{\hat{S}}^{\delta_i + 1})=H^1(\Reg_{\hat{S}}^{(\delta_i - 1)(\delta_i + 1)})=0$ by Corollary \ref{cor:vanishing}, the sequence \eqref{eq:ses4} splits, so $\mathcal{M} \cong \Reg_{\hat{S}}^{2\delta_i}$, and the desired sequence is given by (\ref{eq:ses3}).
\end{proof}

\subsection{A family of noncommutative crepant resolutions}\label{sec:nc-res}

Now, let $J \subseteq I$ be a subset containing $0$. We obtain a crepant partial resolution $S_K$ of $\Sing$ by contracting the exceptional curves in $S$ corresponding to elements of $K \coloneqq I \setminus J$. This fits into the following commutative diagram, where each morphism is projective and birational. \[\begin{tikzcd}[row sep = small] & S \arrow[dd,"f"] \arrow[ld,"h"'] \\ S_K \arrow[rd,"g"'] & \\ & \Sing \end{tikzcd}\] The singularities of $S_K$ are again of type ADE and $h$ is again a minimal resolution. In particular, the formal neighbourhood of a singular point in $S_K$ and its fibre are isomorphic to $\hat{f}$ for some different $\Gamma$. The surface $S_K$ is normal and we have $h_* \Reg_S = \Reg_{S_K}$.

Let $e_J \coloneqq \sum_{i \in J} e_i$ be the idempotent corresponding to the subset $J$. We write \[\mathcal{V}_J \coloneqq e_J \mathcal{V} = \bigoplus_{i \in J} \mathcal{V}_i \; , \quad \Ealg_J \coloneqq e_J\!\Ealg\!e_J = \SEnd_S(\mathcal{V}_J) \; , \quad \text{and} \quad \Pi_J \coloneqq e_J \Pi e_J = H^0(\Ealg_J) \; .\] These are referred to as the \emph{cornered} (sheaves of) modules and algebras with respect to $J$. We will need the following technical results.

\begin{lemma}\label{lem:push-taut-prop}
    \
    \begin{enumerate}
        \item\label{lem:push-taut-prop:1} $R^kh_* \Ealg = 0$ for $k \geq 1$.
        \item\label{lem:push-taut-prop:2} $h_* \mathcal{V}$ is a globally generated reflexive sheaf.
        \item\label{lem:push-taut-prop:3} $h_* \mathcal{V}_J$ is locally-free and $h^*h_* \mathcal{V}_J \cong \mathcal{V}_J$.
        \item\label{lem:push-taut-prop:4} The canonical homomorphism of sheaves of algebras $h_* \Ealg \to \SEnd_{S_K}(h_* \mathcal{V})$ is an isomorphism.
    \end{enumerate}
\end{lemma}
\begin{proof}
    \begin{enumerate}
        \item For $k \geq 2$ the statement follows from the fact that $h$ has fibres of dimension $\leq 1$. Therefore, it suffices to show the statement for $k=1$.
        
        By Corollary \ref{cor:vanishing}, we have $R^1 f_*(\mathcal{E}) = 0$. From the Grothendieck spectral sequence associated to the composition $Rf_* \cong Rg_* \circ Rh_*$ we deduce that $g_* R^1h_* (\mathcal{E}) = 0$. But since $R^1h_* (\mathcal{E})$ can only be supported on a finite number of points, this implies $R^1h_* (\mathcal{E}) = 0$.
        
        \item It suffices to show the statement for each $\mathcal{V}_i$, so let $i \in I$. For global generation, it is furthermore sufficient to show that the fibre of $h_* \mathcal{V}_i$ over any singular point of $S_K$ is generated by global sections, because we already know that $\mathcal{V}_i$ is globally generated and $h$ is an isomorphism locally over any smooth point of $S_K$.

        By Lemma \ref{lem:double-sequence}, we have a short exact sequence \[0 \to \mathcal{V}_i^{\vee}|_{\hat{S}} \to \Reg_{\hat{S}}^{2 \delta_i} \to \mathcal{V}_i|_{\hat{S}} \to 0\] of sheaves on $\hat{S}$. By item (\ref{lem:push-taut-prop:1}) and flat base change, respectively Theorem \ref{thm:mckay}(\ref{thm:mckay:2}), both $R^1h_*(\mathcal{V}_i^{\vee}|_{\hat{S}})$ and $R^1f_*(\mathcal{V}_i^{\vee}|_{\hat{S}})$ vanish. Hence, the pushforward of this sequence along $h$ and $f$, respectively, gives short exact sequences
        \begin{equation}\label{eq:push-double-ses-1}
            0 \to (h_* \mathcal{V}_i^{\vee})|_{\hat{S}_K} \to \Reg_{\hat{S}_K}^{2 \delta_i} \to (h_* \mathcal{V}_i)|_{\hat{S}_K} \to 0 \; ,
        \end{equation}
        on the formal neighbourhood $\hat{S}_K$ of the fibre of $o$ in $S_K$, and
        \begin{equation}\label{eq:push-double-ses-2}
            0 \to (f_* \mathcal{V}_i^{\vee})|_{\widehat{\Sing}} \to \Reg_{\widehat{\Sing}}^{2 \delta_i} \to (f_* \mathcal{V}_i)|_{\widehat{\Sing}} \to 0 \; .
        \end{equation}
        
        Since $\Sing$ is an affine variety, we can pick $2 \delta_i$ global sections $s_1, \ldots, s_{2 \delta_i} \in H^0(f_* \mathcal{V}_i)$ that agree with the sections in (\ref{eq:push-double-ses-2}) up to $\maxid_o H^0(f_* \mathcal{V}_i|_{\widehat{\Sing}})$, where $\maxid_o$ is the maximal ideal of the singular point. As $H^0(f_* \mathcal{V}_i) = H^0(h_* \mathcal{V}_i)$, we can interpret $s_1, \ldots, s_{2 \delta_i}$ as sections of $h_* \mathcal{V}_i$. As such, they agree with the sections in (\ref{eq:push-double-ses-1}) modulo $\maxid_o H^0(h_* \mathcal{V}_i|_{\hat{S}_K})$. But the elements of $\maxid_o H^0(h_* \mathcal{V}_i|_{\hat{S}_K})$ all vanish on $g^{-1}(o)$, so the $s_1, \ldots, s_{2 \delta_i}$ take the same value as the sections in (\ref{eq:push-double-ses-1}) at every point of $g^{-1}(o)$, and hence, also generate the fibre of $h_* \mathcal{V}_i$ at every point of $g^{-1}(o)$. In particular, they generate the fibre at the singular points of $S_K$, which are all contained in $g^{-1}(o)$. This concludes the proof that $h_* \mathcal{V}$ is globally generated.

        Reflexiveness of $h_* \mathcal{V}$ can be checked complete-locally at every closed point. Indeed, the natural homomorphism $h_* \mathcal{V} \to h_* \mathcal{V}^{\vee\!\vee}$ restricts to the corresponding natural homomorphism on the formal neighbourhood of a closed point $x \in S_K$, because taking duals commutes with restriction to the formal neighbourhood and whether a homomorphism of coherent sheaves is an isomorphism can be checked complete-locally around closed points as in the proof of Lemma \ref{lem:sum-extension-formal}.

        To see that $h_* \mathcal{V}$ is reflexive on formal neighbourhoods of closed points in $x$, note first that $h_* \mathcal{V}$ is locally-free and therefore reflexive on the smooth locus of $S_K$. Secondly, complete-locally around a singular point, $h$ is isomorphic to the minimal resolution of another Kleinian singularity, over the completion of which $\mathcal{V}$ decomposes into certain indecomposables according to Theorem \ref{thm:mckay}(\ref{thm:mckay:2}). The pushforward of these indecomposables are reflexive by Theorem \ref{thm:mckay}(\ref{thm:mckay:3}), and so is their direct sum.

        \item Let $i \in J$, and let $y \in S$ be a point in the exceptional locus of $h$. A choice of basis for the fibre of $\mathcal{V}_i$ at $y$ can be extended to global sections of $\mathcal{V}_i$. Since $\mathcal{V}_i$ is trivial along the exceptional locus of $h$ by Theorem \ref{thm:mckay}(\ref{thm:mckay:1}), these global sections trivialise $\mathcal{V}_i$ in an open neighbourhood of the exceptional locus. The first statement now follows from the fact that the natural homomorphism $h_* \Reg_S \to \Reg_{S_K}$ is an isomorphism, and the second statement follows from the fact that the natural homomorphism $h^* h_* \Reg_S \to \Reg_S$ is an isomorphism.

        \item First, note that $f_* p_* \Reg_{\mathcal{Z}} \cong \pi_* \Reg_{\Aff^2}$. Therefore, Theorem \ref{thm:auslander} combined with Proposition \ref{prop:skew-preproj-as-end} implies that the natural algebra homomorphism \[\End_S(p_* \Reg_{\mathcal{Z}}) \to \End_{\Sing}(f_* p_* \Reg_{\mathcal{Z}})\] is an isomorphism. Interpreting these algebras as coherent sheaves over $\Sing$ we see that the natural homomorphism \[f_* \SEnd_S(p_* \Reg_{\mathcal{Z}}) \longrightarrow \SEnd_{\Sing}(f_* p_* \Reg_{\mathcal{Z}})\] is an isomorphism. This isomorphism respects the decomposition of $p_* \Reg_{\mathcal{Z}}$ into copies of $\mathcal{V}_i$'s, so we deduce that the natural homomorphisms 
        \begin{equation}\label{eq:hom-commutes-with-push}
            f_* \SHom_S(\mathcal{V}_i, \mathcal{V}_j) \xlongrightarrow{\sim} \SHom_{\Sing}(f_* \mathcal{V}_i, f_* \mathcal{V}_j) \; .
        \end{equation}
        are isomorphisms.

        In order to show that the same statement holds for $h$ instead of $f$, we identify $h$ complete-locally around any singular point $x$ of $S_K$ with the minimal resolution $\hat{f_x} \colon \hat{S}_x \to \widehat{\Singa{\Gamma_x}}$ of some other type. It follows from item (\ref{lem:push-taut-prop:1}) and Theorem \ref{thm:mckay}(\ref{thm:mckay:2}) that the pullback of $\mathcal{V}_i$ to $\hat{S}_x$ again decomposes as a direct sum of tautological bundles. For these, we know that the natural homomorphism is an isomorphism using the completion of (\ref{eq:hom-commutes-with-push}) at $o$ with $\hat{f}_x$ instead of $\hat{f}$. This implies that (\ref{eq:hom-commutes-with-push}) with $h$ instead of $f$ is an isomorphism complete-locally around any point. Therefore, it is an isomorphism everywhere. \qedhere
    \end{enumerate}
\end{proof}

While the surface $S$ is a resolution of singularities of $\Sing$ in the classical sense, the sheaf of algebras $h_* \Ealg$ is a noncommutative crepant resolution in the sense of Van den Bergh \cite{van-den-bergh_nc-crepant_04,van-den-bergh_nc-crepant_22}. It follows from a theorem of Van den Bergh \cite{van-den-bergh_flops-nc_04} that all these resolutions are derived-equivalent:

\begin{theorem}\label{thm:derived-equivalence}
    The derived equivalence $\Phi$ from Theorem~\ref{thm:mckay} fits into the following quasi-commutative diagram in which every functor is an equivalence of triangulated categories. \[\begin{tikzcd} & \Db(\Ealg) \arrow[dd,"Rf_*"] \arrow[ld,"Rh_*"'] \arrow[r, equal, "\sim"] & \Db(S) \arrow[dd,"\Phi"] \\ \Db(h_* \Ealg) \arrow[rd,"Rg_*"'] & & \\ & \Db(f_* \Ealg) \arrow[r, equal, "\sim"] & \Db(\Pi) \end{tikzcd}\] where the two horizontal equivalences are obtained from equivalences of abelian categories.
\end{theorem}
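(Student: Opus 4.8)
The strategy is to deduce everything from the known McKay equivalence $\Db(\Aalg) \cong \Db(X)$ together with Van den Bergh's tilting-bundle machinery. The key observation is that the tautological bundle $\Tautb$ on $X$ is a tilting bundle: $\SEnd_{\Reg_X}(\Tautb) = \Aalg$ has $H^{>0}(\Aalg) = 0$ when pushed to any of the schemes below, because $h$, $f$, and $g$ are birational projective morphisms with at most one-dimensional fibres and $\Pi$ has the expected (finite) homological dimension. So I would first record that $\Tautb$, viewed via $h$, is a relative tilting bundle on $X$ over $X_J$: the natural map $h^* h_* \Aalg \to \Aalg$ plus the vanishing $R^{>0} h_* \Aalg = 0$ gives, by the relative tilting formalism (Van den Bergh, \cite{van-den-bergh_flops-nc_04}, following the arguments also in \cite{bkr_mckay-derived_01}), an equivalence $Rh_* \SHom_{\Reg_X}(\Tautb, -) \colon \Db(X) \xrightarrow{\sim} \Db(h_*\Aalg)$. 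Here the essential input is $R^{i} h_* \Tautb_j = 0$ for $i>0$ and all $j$, which follows because each $\Tautb_j$ is globally generated and $h$ contracts only a sub-configuration of $(-2)$-curves.

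**Main steps.** (1) Identify $\Coh(\Aalg)$ with $\Coh(X)$ and $\Coh(f_*\Aalg)$ with $\Mod(\Pi)$; these are the two horizontal equivalences and are immediate since $\Aalg$ is an Azumaya-like sheaf Morita-equivalent to $\Reg_X$ (indeed $\Aalg = \SEnd(\Tautb)$ with $\Tautb$ a vector bundle containing $\Reg_X = \Tautb_0$ as a summand), and $f_*\Aalg = \End_{\Reg_X}(\Tautb) = \Pi$ is an honest algebra. (2) Show $R^{>0}h_*\Aalg = 0$, $R^{>0}g_*(h_*\Aalg) = 0$, $R^{>0}f_*\Aalg = 0$; the last is classical (it is essentially the statement that $X$ is a moduli space of $\Pi$-modules and $\Tautb$ is tilting, cf. \cite{kapranov_vasserot_kleinian-derived-hall_00}), and the first two follow from it by the Leray spectral sequence for $f = g \circ h$ together with the fact that $h_*$ of a bundle generated by global sections on a surface with rational singularities has no higher cohomology along the fibres. (3) Apply Van den Bergh's result \cite[§3]{van-den-bergh_flops-nc_04}: for a projective birational morphism $\pi\colon Y \to Y'$ with $Y$ smooth and fibres of dimension $\le 1$, if $\Tautb$ is a bundle on $Y$ with $R^{>0}\pi_*\SEnd(\Tautb) = 0$ and $\SEnd(\Tautb)$ generated in the appropriate sense, then $R\pi_*\SHom(\Tautb,-)$ is an equivalence $\Db(Y) \to \Db(\pi_*\SEnd(\Tautb))$. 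Applying this to $h$, to $g$ (with the sheaf of algebras $h_*\Aalg$ in place of $\SEnd$ of a bundle — here one uses the relative version over $X_J$), and to $f$ gives the three vertical and diagonal equivalences. (4) Check the diagram commutes: this is just functoriality of pushforward, $Rf_* = Rg_* \circ Rh_*$, combined with the compatibility of each equivalence with the horizontal identifications, which is built into the construction.

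**Main obstacle.** The genuinely delicate point is justifying that $Rh_*$ (and likewise $Rg_*$) is an equivalence onto $\Db(h_*\Aalg)$ rather than merely onto some subcategory — i.e. verifying the hypotheses of Van den Bergh's theorem in the \emph{relative} and \emph{noncommutative-target} setting, where the base $X_J$ is itself singular and the structure sheaf is replaced by the noncommutative $h_*\Aalg$. One must confirm that $h_*\Aalg$ has finite global dimension (so that $\Db(\Coh(h_*\Aalg))$ is well-behaved) and that the relative generation/tilting condition holds fibrewise over $X_J$; the ADE/one-dimensional-fibre geometry makes this true, but it requires care. The horizontal equivalences, the cohomology vanishings, and the commutativity are comparatively routine. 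I would organise the write-up so that $Rf_*$ being an equivalence is quoted from the literature, $Rh_*$ is the new case handled by the relative tilting argument, and $Rg_*$ then follows formally from $Rf_* = Rg_*\circ Rh_*$ and two-out-of-three.
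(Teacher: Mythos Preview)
Your approach via Van den Bergh's tilting machinery is the same as the paper's, and your two-out-of-three deduction of $Rg_*$ is exactly how the triangle closes. The paper is simply more precise about which hypotheses to verify: it checks that $\Tautb$ is a local projective generator of $^{-1}\Per(X/X_J)$ via the four conditions of Van den Bergh's Proposition~3.2.7, namely surjectivity of $h^*h_*\Tautb \to \Tautb$ (from global generation of $\Tautb$), vanishing of $R^{>0}h_*(\Tautb^{\vee})$ (note it is the \emph{dual} that matters here, and this follows from $R^{>0}h_*\Aalg=0$, argued directly as in Kapranov--Vasserot rather than via Leray), ampleness of $\det\Tautb$ (from the GIT polarisation---a condition you do not mention), and $\Reg_X$ being a summand of $\Tautb$.
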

\begin{proof}
    The commutativity of the diagram is clear. The vertical functor $\Phi$ is an equivalence by Theorem \ref{thm:mckay}(\ref{thm:mckay:4}). The equivalence $\Db(\Ealg) \cong \Db(S)$ is induced by the Morita equivalence between $\Ealg$ and $\Reg_S$ (which is a special case of Theorem \ref{thm:morita-general}). The equivalence $\Db(f_* \Ealg) \cong \Db(\Pi)$ follows from the equivalence $\Coh(f_* \Ealg) \cong \Mod(\Pi)$, which holds because $\Sing$ is affine and $H^0(f_* \Ealg) = \Pi$. It remains to show that $Rh_*$ is an equivalence of triangulated categories.
    
    It suffices to show that the bundle $\mathcal{V}$ satisfies the assumption of \cite[Proposition 3.3.1]{van-den-bergh_flops-nc_04} to be a local projective generator of $^{-1}\Per(S/S_K)$. By \cite[Proposition 3.2.7]{van-den-bergh_flops-nc_04}, this amounts to showing the following four properties:
    \begin{enumerate}
        \item\label{thm:derived-equivalence:pf:1} The natural homomorphism $h^* h_* \mathcal{V} \to \mathcal{V}$ is surjective.
        \item\label{thm:derived-equivalence:pf:2} $R^i h_*(\mathcal{V}^{\vee})=0$ for $i \geq 1$.
        \item\label{thm:derived-equivalence:pf:3} $\det(\mathcal{V})$ is ample.
        \item\label{thm:derived-equivalence:pf:4} $\Reg_X$ is a direct summand of $\mathcal{V}$.
    \end{enumerate}

    Property (\ref{thm:derived-equivalence:pf:1}) follows from the fact that $\mathcal{V}$ is globally generated (see Proposition \ref{prop:tautb-basics}). Property (\ref{thm:derived-equivalence:pf:2}) follows from Lemma \ref{lem:push-taut-prop}(\ref{lem:push-taut-prop:1}) because $\mathcal{V}^{\vee}$ is a direct summand of $\Ealg$. Property (\ref{thm:derived-equivalence:pf:3}) holds because $\det(\mathcal{V})$ has positive degree on each exceptional curve by Theorem \ref{thm:mckay}(\ref{thm:mckay:1}). Finally, property (\ref{thm:derived-equivalence:pf:4}) holds because $\mathcal{V}_0 \cong \Reg_S$.
\end{proof}

Note that we consider the functor from $\Db(S)$ to $\Db(h_* \Ealg)$ that, in Van den Bergh's theory, is the one associated with $\mathcal{V}^{\vee}$, the local projective generator for $^0\Per(S/S_K)$.

\subsection{The partial McKay correspondence}\label{sec:partial}

We now present the analogue of the McKay correspondence for any crepant partial resolution $S_K$ of a Kleinian singularity; we call this the \emph{partial McKay correspondence}. Set \[\mathcal{T}_J \coloneqq h_*\mathcal{V}_J \; .\] By Lemma \ref{lem:push-taut-prop}, $\mathcal{T}_J$ is a globally generated vector bundle on $S_K$, and we have \[\End_{S_K}(\mathcal{T}_J) = H^0(\SEnd_{S_K}(h_* \mathcal{V}_J)) \cong H^0(h_* \SEnd_S(\mathcal{V}_J)) = \Pi_J \; .\] This algebra acts fibre-wise on $\mathcal{T}_J$, and since $\mathcal{T}_J$ is globally generated, the natural homomorphism \begin{equation}\label{eq:universal-partial}
    e_J \Pi e_0 \otimes \Reg_{S_K} \longrightarrow \mathcal{T}_J
\end{equation}
is a surjection. In other words, every fibre of $\mathcal{T}_J$ is a $\Pi_J$-module of dimension vector $\delta|_J$ generated by its component at the vertex $0$. We now show that, just like $\mathcal{V}$, $\mathcal{T}_J$ is a tilting bundle and a universal family of $0$-generated $\Pi_J$-modules.

\begin{theorem}[The partial McKay correspondence]\label{thm:partial}
    \
    \begin{enumerate}
        \item\label{thm:partial:1} The functor \[\Phi_J \coloneqq \RHom(\mathcal{T}_J^{\vee}, -) \colon \Db(S_K) \longrightarrow \Db(\Pi_J)\] is an equivalence of triangulated categories with quasi-inverse $\Psi_J \coloneqq \mathcal{T}_J^{\vee} \ltensor_{\Pi_J} -$.
        
        \item\label{thm:partial:2} $S_K$ is a fine moduli space for $0$-generated $\Pi_J$-modules of dimension vector $\delta\vert_J$. \footnote{For a precise definition of the moduli functor, see \cite[section 2.6]{karmazyn_git-tilting_17}. In order to get rid of the $\C^*$-stabiliser of any module, Karmazyn identifies families that differ by the twist of a line bundle. An alternative but isomorphic functor would be the Quot functor parametrising quotients of $e_J \Pi e_0$ over $\Pi_J$.} 
    \end{enumerate}
\end{theorem}
\begin{proof}
    Part (\ref{thm:partial:1}) follows, exactly as in the proof of Theorem \ref{thm:derived-equivalence}, from \cite{van-den-bergh_flops-nc_04} and the properties of $\mathcal{V}_J$ given above. Part (\ref{thm:partial:2}) is a special case of Karmazyn's result \cite[Corollary 5.2.4]{karmazyn_git-tilting_17}.
\end{proof}

\section{Kleinian orbifolds}\label{sec:stacky-res}

\subsection{Stacky resolutions}

Another notion of noncommutative resolution lives in the world of stacks. For example, the quotient stack $[\Sing]$ resolves the singularity on $\Sing$ and, by Proposition \ref{thm:morita-skew-preproj}, $[\Sing]$ is Morita equivalent to the noncommutative algebra $\Pi$. In this section, we will generalise this Morita equivalence to noncommutative resolutions of $S_K$.

Since $S_K$ has quotient singularities, there is a canonical way to define a smooth Deligne-Mumford stack $\Sstack_K$ together with a morphism
\begin{equation}\label{eq:orbi-res}
    \epsilon \colon \Sstack_K \longrightarrow S_K
\end{equation}
that is a bijection on points and exhibits $S_K$ as a moduli space for $\Sstack_K$ \cite[Proposition 2.8]{vistoli_intersection-theory-stacks_89}. The stack $\Sstack_K$ is constructed by glueing quotient stacks of \'etale-local quotient presentations of $S_K$. In particular, $\epsilon$ is an isomorphism over the non-singular points of $S_K$. We will also refer to it as an ``orbifold resolution''. At a singular point $x$ of $S_K$, $\Sstack_K$ has isotropy group $\Gamma_x$ which, through its action on the tangent space of $\Sstack_K$ at $x$, is isomorphic to the subgroup of $\SL(2)$ associated to the type of singularity at $x$.

Recall that we have a fixed isomorphism $\Reg_{S_K} \cong h_* \mathcal{V}_0 = e_0 h_* \Ealg \! e_0$. The idempotent $e_0$ therefore defines an exact cornering functor \[\Coh(h_* \Ealg) \longrightarrow \Coh(S_K) \; , \quad F \mapsto e_0 F \; .\]

\begin{theorem}\label{thm:nc-orbifold-equivalence}
    $\Sstack_K$ is Morita equivalent to $h_* \Ealg$ over $S_K$. More precisely, we have the following diagram, commutative up to natural isomorphism \[\begin{tikzcd}[column sep=small] \Coh(h_* \Ealg) \arrow[rr, equal, "\sim"] \arrow[rd, "e_0"'] && \Coh(\Sstack_K) \arrow[ld, "\epsilon_*"] \\ & \Coh(S_K) \end{tikzcd}\]
\end{theorem}

We postpone the proof of Theorem \ref{thm:nc-orbifold-equivalence} to section \ref{sec:nc-orbifold-equivalence-proof}. Together with the derived equivalence of Theorem \ref{thm:derived-equivalence}, it extends the McKay correspondence as stated below. This result also follows from Chen--Tseng \cite{chen_tseng_derived-mckay_08}, once it is understood that $S$ can be identified with the correct component of $\Hilb(\Sstack_K)$, which we show in Corollary \ref{cor:quot-hilb-stack}(\ref{cor:quot-hilb-stack:2}) below.

\begin{corollary}[\cite{chen_tseng_derived-mckay_08}]\label{cor:nc-orbifold-equivalence}
    \[\Db(\Sstack_K) \cong \Db(S)\]
\end{corollary}

\begin{example}\label{ex:morita-preproj-equiv}
    In the case of $J=\{0\}$, the Morita equivalence of Theorem \ref{thm:nc-orbifold-equivalence} is that of Proposition \ref{thm:morita-skew-preproj}: The orbifold resolution of $\Sing$ is given by the quotient stack $[\Sing] \to \Sing$, and we have equivalences \[\Coh([\Sing]) \cong \Coh_{\Gamma}(\Aff^2) \cong \Mod(\C[x,y] \rtimes \Gamma) \cong \Mod(\Pi) \cong \Coh(f_* \Ealg) \; .\] 
\end{example}

\begin{example}\label{ex:morita-projective-mckay}
    Suppose $\Gamma<\SL(2,\C)$ contains the element $-1$. This is the case in types $A_n$ for $n$ odd, and in all types $D$ and $E$. Equivalently, $\Gamma$ is a degree-$2$ extension of its image $\Prj\!\Gamma$ in $\PSL(2,\C)$.

    Then $\Singa{\{\pm 1\}}$ has a singularity of type $A_1$. A crepant resolution is given by the cotangent bundle of the projective line, $T^{\vee}\!\Prj^1$. On the other hand, the action of $\Prj\!\Gamma$ on $\Prj^1$ gives rise to a symplectic action on $T^{\vee}\!\Prj^1$. We obtain the following commutative diagram, in which the vertical maps are crepant (partial) resolutions and the horizontal maps are quotients by $\Prj\!\Gamma$. \[\begin{tikzcd} T^{\vee}\!\Prj^1 \arrow[r] \arrow[d] & T^{\vee}\!\Prj^1\!\!/\Prj\!\Gamma \arrow[d] \\ \Singa{\{\pm 1\}} \arrow[r] & \Sing \end{tikzcd}\]

    Note that $T^{\vee}\!\Prj^1\!\!/\Prj\!\Gamma$ contains a single projective curve, which is the image of the zero section in $T^{\vee}\!\Prj^1$. In fact, we have an isomorphism \[T^{\vee}\!\Prj^1\!\!/\Prj\!\Gamma \cong S_K\] over $\Sing$, where $K = I \setminus \{0,r\}$, and $r$ is given as follows.
    \begin{itemize}
        \item In type $A_n$, $n$ odd, $r$ corresponds to the curve in the middle of the exceptional divisor on $S$.

        \item In types $D_n$, $E_6$, $E_7$, $E_8$, $r$ corresponds to the unique trivalent vertex of the finite-type Dynkin diagram.
    \end{itemize}
    The orbifold resolution of $S_K$ is then given by the quotient stack \[ \Sstack_K = [T^{\vee}\!\Prj^1\!\!/\Prj\!\Gamma] \; . \] The derived equivalence of Corollary \ref{cor:nc-orbifold-equivalence} in this case reads \[\Dba{\Prj\!\Gamma}(T^{\vee}\!\Prj^1) \cong \Db(S) \; .\] This (or rather, the derived equivalence between $[T^{\vee}\!\Prj^1\!\!/\Prj\!\Gamma]$ and $\Pi$) was observed and described under the name ``projective McKay correspondence'' by Brav \cite{brav_projective-mckay_09}.
\end{example}

\begin{example}\label{ex:morita-normal-subgp}
    The previous example can be generalised as follows (cf.\ \cite{ishii_ito_de-celis_gn-n-hilb_13}). Suppose $\Gamma$ contains a non-trivial, proper, normal subgroup $N \triangleleft \Gamma$. Then we have a morphism between singularities $\Singa{N} \to \Sing$ which is the quotient map for the induced action of $H \coloneqq \Gamma/N$ on $\Singa{N}$. Let $Y$ be the (classical) crepant resolution of $\Singa{N}$. The action of $H$ on $\Singa{N}$ lifts to an action on $Y$. Then we have a crepant partial resolution $Y\!/H \to \Sing$, its orbifold resolution is given by $[Y\!/H] \to Y\!/H$, and Corollary \ref{cor:nc-orbifold-equivalence} reads \[\Dba{H}(Y) \cong \Db(S) \; .\]
\end{example}

\begin{remark}
    In recent work, van de Kreeke \cite{vandekreeke_resolutions-tannaka_25} (building on an idea of Abdelgadir--Segal \cite{abdelgadir_segal_mckay-d4_24}) uses an approach inspired by Tannaka duality to construct the resolutions $S$ and $[\Sing]$ as quotients of the same algebraic group action for different stability. It is an interesting question whether this group action (or a similar one) yields all the resolutions $\Sstack_K$ under varying stability conditions.
\end{remark}

\subsection{Surfaces with ADE singularities and full reflexive sheaves}\label{sec:global-ade-full-reflexive}

Before proving Theorem \ref{thm:nc-orbifold-equivalence}, we will generalise some results from section \ref{sec:resolutions}. Let $X$ be a surface with ADE singularities, i.e., a surface that is smooth away from a set of isolated points, at each of which the completion of $X$ is isomorphic to $\widehat{\Sing}$ for some $\Gamma < \SL(2)$.

\begin{definition}
    A coherent sheaf on $X$ is a \emph{full reflexive sheaf} if it is reflexive and its restriction to the completion $\hat{X}_x$ at any closed point $x \in X$ contains all the indecomposable reflexive sheaves (as determined by Theorem \ref{thm:mckay}(\ref{thm:mckay:3})) over $\hat{X}_x$ as direct summands.
\end{definition}

\begin{example}
    The partial resolution $S_K$ is a surface with ADE singularities, and the sheaf $h_* \mathcal{V}$ on $S_K$ is full reflexive. Indeed, we already saw in Lemma \ref{lem:push-taut-prop}(\ref{lem:push-taut-prop:2}) that $h_* \mathcal{V}$ is reflexive, and following the argument presented there we see that, by Theorem \ref{thm:mckay}(\ref{thm:mckay:1}), the indecomposable reflexive sheaves appearing as direct summands in $h_* \mathcal{V}$ at a point $x$ correspond precisely to the curves in $h^{-1}(x)$ along which $\mathcal{V}$ has non-zero degree, which is the case along every such curve.
\end{example}

\begin{lemma}\label{lem:full-reflexive-morita}
    Let $X$ be a surface with ADE singularities and $\mathcal{W}$, $\mathcal{W}'$ be two full reflexive sheaves on $X$. Then $\SEnd_X(\mathcal{W})$ and $\SEnd_X(\mathcal{W}')$ are Morita equivalent via the same functors as in Theorem \ref{thm:morita-general}.
\end{lemma}
\begin{proof}
    The proof proceeds in the same way as the proof of Theorem \ref{thm:morita-general}, the only difference being that it suffices to check that the homomorphism \[\SHom_X(\mathcal{W}', \mathcal{W}) \otimes_{\SEnd_X(\mathcal{W}')} \SHom_X(\mathcal{W}, \mathcal{W}') \longrightarrow \SEnd_X(\mathcal{W})\] and its opposite are isomorphisms complete-locally instead of Zariski-locally.
\end{proof}

\begin{lemma}\label{lem:global-quotient-reflexive}
    Let $Y$ be a smooth, quasi-projective surface, and let $H$ act on $Y$ in such a way that any $h \in H$ stabilising a point $y \in Y$ acts on the tangent space at $y$ with determinant $1$. Let $X \coloneqq Y\!/H$ be the quotient variety and $\pi_X \colon Y \to X$ be the quotient map. Then
    \begin{enumerate}
        \item\label{lem:global-quotient-reflexive:1} $X$ is a surface with ADE singularities;
        \item\label{lem:global-quotient-reflexive:2} $\pi_{X,*}\Reg_Y$ is a full reflexive sheaf on $X$;
        \item\label{lem:global-quotient-reflexive:3} The natural homomorphism $(\pi_{X,*} \Reg_Y) \rtimes H \to \SEnd_X(\pi_{X,*}\Reg_Y)$ of coherent sheaves of algebras on $X$ is an isomorphism.
    \end{enumerate}
\end{lemma}
\begin{proof}
    \begin{enumerate}
        \item This is clear.

        \item By the same argument as in the proof of Lemma \ref{lem:push-taut-prop}(\ref{lem:push-taut-prop:2}), it suffices to show that $\pi_{X,*} \Reg_Y$ is full reflexive complete-locally around every closed point of $X$. Let $x \in X$ be a closed point and consider the following Cartesian diagram.
        \[\begin{tikzcd}
            \hat{Y}_x \arrow[r] \arrow[d, "\hat{\pi}_{X,x}"] & Y \arrow[d, "\pi_X"] \\
            \hat{X}_x \arrow[r] & X
        \end{tikzcd}\]
        By flat base change, $\pi_{X,*} \Reg_Y|_{\hat{X}_x} \cong \hat{\pi}_{X,x,*} \Reg_{\hat{Y}_x}$. Let $y \in \pi^{-1}(x)$, and let $\hat{Y}_y$ be its connected component in $\hat{Y}_x$. Then the restriction of $\hat{\pi}_{X,x}$ to $\hat{Y}_y \to \hat{X}_x$ is isomorphic to the completion of a Kleinian quotient map, under which the structure sheaf is pushed forward to a full reflexive sheaf by Theorem \ref{thm:mckay}(\ref{thm:mckay:3}). Hence, \[\hat{\pi}_{X,x,*} \Reg_{\hat{Y}_x} = \bigoplus_{y \in \pi^{-1}(x)} \hat{\pi}_{X,x,*} \Reg_{\hat{Y}_y}\] is full reflexive.
        
        \item Again, we check that the homomorphism is an isomorphism complete-locally on $X$. Let $x \in X$, $y \in \pi^{-1}(x)$ and $\hat{Y}_y \subseteq \hat{Y}_x$ be as above. Then
        \begin{equation}\label{eq:auslander-complete-local}
            (\hat{\pi}_{X,x,*}\Reg_{\hat{Y}_y}) \rtimes \Stab_H(y) \longrightarrow \SEnd_{\hat{X}_x}(\hat{\pi}_{X,x,*}\Reg_{\hat{Y}_y})
        \end{equation}
        is an isomorphism by identifying $\hat{Y}_y \to \hat{X}_x$ with a Kleinian quotient and observing that Theorem \ref{thm:auslander} still holds after completion at $o$. Now, let $h_1, \ldots, h_N$ be representatives of the left cosets of $\Stab_H(y)$ in $H$. Then \[(\hat{\pi}_{X,x,*}\Reg_{\hat{Y}_x}) \rtimes H = \bigoplus_{k,l} h_k \left((\hat{\pi}_{X,x,*}\Reg_{\hat{Y}_y}) \rtimes \Stab_H(y)\right) h_l^{-1}\] and \[\SEnd_{\hat{X}_x}(\hat{\pi}_{X,x,*}\Reg_{\hat{Y}_x}) \cong \bigoplus_{k,l} h_k \left(\SEnd_{\hat{X}_x}(\hat{\pi}_{X,x,*}\Reg_{\hat{Y}_y}) \right)h_l^{-1} \; .\] Hence, the isomorphism (\ref{eq:auslander-complete-local}) extends to an isomorphism \[(\hat{\pi}_{X,x,*}\Reg_{\hat{Y}_x}) \rtimes H \longrightarrow \SEnd_{\hat{X}_x}(\hat{\pi}_{X,x,*}\Reg_{\hat{Y}_x}) \; ,\] which is the completion of $(\hat{\pi}_{X,*}\Reg_Y) \rtimes H \to \SEnd_X(\Reg_Y)$ at $x$. \qedhere
    \end{enumerate}
\end{proof}

\subsection{Proof of Theorem \ref{thm:nc-orbifold-equivalence}}\label{sec:nc-orbifold-equivalence-proof}

First, we establish an analogue of Theorem \ref{thm:nc-orbifold-equivalence} for quotient stacks, along with some conditions on compatibility with \'etale pullback. We will use this to prove Theorem \ref{thm:nc-orbifold-equivalence} by writing $\Sstack_K$ \'etale-locally over $S_K$ as a quotient stack and establishing the diagram of functors for categories of descent data for that covering.

\begin{lemma}\label{lem:nc-orbifold-equivalence}
    \begin{enumerate}
        \item\label{lem:nc-orbifold-equivalence:1} Let $V$ be a smooth, affine surface, $H$ be a finite group acting on $V$ subject to the condition in Lemma \ref{lem:global-quotient-reflexive}, $U \coloneqq V\!/H$ be the quotient variety with ADE singularities and $\epsilon_U \colon [V\!/H] \to U$ be the natural map. Further, let $\mathcal{W}$ be a full reflexive sheaf on $U$ and write $\Ealg_U \coloneqq \SEnd_U(\mathcal{W})$. Then we have the following diagram. \[\begin{tikzcd} \Coh(\Ealg_U) \arrow[rd, "\Omega_V", "\sim"'] \arrow[dd, "\Omega_U"{name=DD, left}] & \\ & \Coh([V\!/H]) \arrow[ld, "\epsilon_{U,*}"] \arrow[from=DD, Rightarrow, "\omega_U"', "\sim", shorten <=3pt] \\ \Coh(U) \end{tikzcd}\] Here, $\Omega_V$ is an equivalence, and $\omega_U \colon \Omega_U \to \epsilon_{U,*} \circ \Omega_V$ is a natural isomorphism.

        \item\label{lem:nc-orbifold-equivalence:2} Let $(V,H,U,\epsilon_U)$ and $(V',H',U',\epsilon_{U'})$ each be as in (\ref{lem:nc-orbifold-equivalence:1}), and let \[\begin{tikzcd}[row sep=small] V' \arrow[d] \arrow[r, "v"] & V \arrow[d] \\ \lbrack V'\!/H'\rbrack \arrow[r, "v"] \arrow[d, "\epsilon_{U'}"] & \lbrack V\!/H\rbrack \arrow[d, "\epsilon_{U}"] \\ U' \arrow[r, "u"] & U \end{tikzcd}\] be a commutative diagram in which $v$ is equivariant with respect to some group homomorphism $H' \to H$ and the horizontal morphisms are \'etale. Then we have natural isomorphisms $\psi_v$, $\psi_u$ making the following diagrams $2$-commute. \[\begin{tikzcd}[row sep=large]
            \Coh(u^* \Ealg_U) \arrow[d, "\Omega_{V'}"] & \Coh(\Ealg_U)\arrow[l, "u^*"']\arrow[d, "\Omega_V"] \\ \Coh([V'\!/H']) & \Coh([V\!/H]) \arrow[l, "v^*"']\arrow[lu, Rightarrow, "\psi_v"']
        \end{tikzcd} \quad \text{and} \quad \begin{tikzcd}[row sep=large]
            \Coh(u^* \Ealg_U) \arrow[d, "\Omega_{U'}"] & \Coh(\Ealg_U)\arrow[l, "u^*"']\arrow[d, "\Omega_U"] \\ \Coh(U') & \Coh(U) \arrow[l, "u^*"']\arrow[lu, Rightarrow, "\psi_u"']
        \end{tikzcd}\]
        Furthermore, $\psi_u$ and $\psi_v$ are compatible with $\omega_U$ and $\omega_{U'}$ in that we have the following commutative diagram.
        \[\begin{tikzcd}[column sep=large]
            u^* \Omega_U \arrow[r, "u^* \omega_U"]  \arrow[dd, "\psi_u"] & u^* \epsilon_{U,*} \Omega_V \arrow[d, "\sim"]\\
            & \epsilon_{U', *} v^* \Omega_V \arrow[d, "\epsilon_{U', *} \psi_v"] \\
            \Omega_{U'}u^* \arrow[r, "\omega_{U'} u^*"] & \epsilon_{U',*} \Omega_{V'} u^*
        \end{tikzcd}\]
        In particular, the homomorphism in the top-right corner, which is the natural base change homomorphism, is an isomorphism.

        \item\label{lem:nc-orbifold-equivalence:3} Let $(V,H,U,\epsilon_U)$, $(V',H',U',\epsilon_{U'})$ and $(V'',H'',U'',\epsilon_{U''})$ each be as in (\ref{lem:nc-orbifold-equivalence:1}), and let \[\begin{tikzcd}[row sep=small] V'' \arrow[r, "v'"] \arrow[d] & V' \arrow[r, "v"] \arrow[d] & V \arrow[d] \\ \lbrack V''\!/H''\rbrack \arrow[r, "v'"] \arrow[d, "\epsilon_{U''}"] & \lbrack V'\!/H'\rbrack \arrow[r, "v"] \arrow[d, "\epsilon_{U'}"] & \lbrack V\!/H\rbrack \arrow[d, "\epsilon_{U}"] \\ U'' \arrow[r, "u'"] & U' \arrow[r, "u"] & U \end{tikzcd}\] be a diagram where both halves are as in (\ref{lem:nc-orbifold-equivalence:2}). Then the $\psi_v$, $\psi_{v'}$, $\psi_{v \circ v'}$ are compatible, in that the following diagram commutes.
        \[\begin{tikzcd}[column sep=large]
            (v \circ v')^*\Omega_V \arrow[rr, Rightarrow, "\psi_{v \circ v'}"] \arrow[d, equal, "\sim"] && \Omega_{V''}(u \circ u')^* \arrow[d, equal, "\sim"] \\
            v'^*v^*\Omega_V \arrow[r, Rightarrow, "v'^* \psi_v"] & v'^* \Omega_{V'} u^* \arrow[r, Rightarrow, "\psi_{v'}u^*"] & \Omega_{V''}u'^*u^*
        \end{tikzcd}\]
        The analogous statement holds for $\psi_u$, $\psi_{u'}$, $\psi_{u \circ u'}$.
    \end{enumerate}
\end{lemma}
\begin{proof}
    As all of the maps involved in this Lemma are affine, we will identify all sheaves with their pushforwards to $U$.
    \begin{enumerate}
        \item Let $F \in \Coh(\Ealg_U)$. We define \[\Omega_V(F) \coloneqq \SHom_U(\mathcal{W}, \Reg_V) \otimes_{\Ealg_U} F \quad \text{and} \quad \Omega_U(F) \coloneqq \SHom_U(\mathcal{W}, \Reg_U) \otimes_{\Ealg_U} F \; ,\] where $\Omega_V(F)$ is considered an element in $\Coh([V\!/H])$ via its $\SEnd_U(\Reg_V)$- and therefore $H$-equivariant $\Reg_V$-module structure. To turn $\Omega_V$ and $\Omega_U$ into functors, they are defined on morphisms in the obvious way.

        To see that $\Omega_V$ is an equivalence, identify $\Coh([V\!/H])$ with $\Coh(\Reg_V \rtimes H)$ and note that \[\Reg_V \rtimes H \cong \SEnd_U(\Reg_V)\] by Lemma \ref{lem:global-quotient-reflexive}(\ref{lem:global-quotient-reflexive:3}). As $\mathcal{W}$ is full reflexive by assumption and $\Reg_V$ is full reflexive by Lemma \ref{lem:global-quotient-reflexive}(\ref{lem:global-quotient-reflexive:2}), the functor $\Omega_V$ is the Morita equivalence of Lemma \ref{lem:full-reflexive-morita}, Theorem \ref{thm:morita-general}.

        Finally, we describe the natural isomorphism $\omega_U$. The pushforward of $\Omega_V(F)$ to $U$ can be identified with the $H$-invariant sections in $\SHom_U(\mathcal{W}, \Reg_V) \otimes_{\Ealg_U} F$, on which the $H$-action is defined by that on $\Reg_V$. We clearly have \[\SHom_U(\mathcal{W}, \Reg_V)^H = \SHom_U(\mathcal{W}, \Reg_V^H) = \SHom_U(\mathcal{W}, \Reg_U) \; .\] Since $\SHom_U(\mathcal{W}, \Reg_U)$ is a flat right $\Ealg_U$-module, the natural homomorphism $\Omega_U(F) \to \Omega_V(F)$ is an injection, and its image is precisely the subspace of $H$-invariants.

        \item We describe the natural isomorphism $\psi_v$. The transformation $\psi_u$ is defined analogously. Given an element $F \in \Coh(\Ealg_U)$, we define $\psi_v(F)$ as the composition of natural isomorphisms
        \[\begin{split}
            v^*\Omega_V(F) &= \Reg_{V'} \otimes_{\Reg_V} \SHom_U(\mathcal{W}, \Reg_V) \otimes_{\Ealg_U} F \\
            & \cong \Reg_{V'} \otimes_{\Reg_V} \SHom_{\Reg_V}(\mathcal{W} \otimes_U \Reg_V, \Reg_V) \otimes_{\Ealg_U} F \\
            & \cong \SHom_{\Reg_{V'}}(\mathcal{W} \otimes_U \Reg_{V'}, \Reg_{V'}) \otimes_{\Ealg_U} F \\
            & \cong \SHom_{\Reg_{V'}}(\mathcal{W} \otimes_U \Reg_{V'}, \Reg_{V'}) \otimes_{\Reg_{U'} \otimes_U \Ealg_U} (\Reg_{U'} \otimes_U \Ealg_U) \otimes_{\Ealg_U} F \\
            & \cong \SHom_{\Reg_{V'}}(\mathcal{W} \otimes_U \Reg_{V'}, \Reg_{V'}) \otimes_{\Reg_{U'} \otimes_U \Ealg_U} (\Reg_{U'} \otimes_U F) \\
            & \cong \SHom_{\Reg_{U'}}(\mathcal{W} \otimes_U \Reg_{U'}, \Reg_{V'}) \otimes_{\Reg_{U'} \otimes_U \Ealg_U} (\Reg_{U'} \otimes_U F) = \Omega_{V'}u^*(F)
        \end{split}\]
        Here, we used the fact that $\SHom$ commutes with flat pullback in the third line, as well as standard adjunctions and associativity of tensor products.

        The compatibility of $\psi_u$ and $\psi_v$ with $\omega_U$ follows immediately after unravelling their respective definitions.

        \item Likewise, this is straight-forward (although notationally exhausting) to check from the definitions. \qedhere
    \end{enumerate}
\end{proof}

\begin{proof}[Proof of Theorem \ref{thm:nc-orbifold-equivalence}]
    By \cite[Lemma 2.2.3]{abramovich_vistoli_compactifying_02},  there is an affine scheme $U$ together with an \'etale covering $U \to S_K$ over which $\Sstack_K$ is a quotient stack. More explicitly, there is a smooth, symplectic, affine scheme $V$, \'etale over $\Sstack_K$, together with a finite group $H$ acting symplectically on $V$ such that $U \cong V\!/H$ and the following square is Cartesian. \[\begin{tikzcd} \lbrack V\!/H\rbrack \arrow[r] \arrow[d] & \Sstack_K \arrow[d, "\epsilon"] \\ U \arrow[r] & S_K \end{tikzcd}\] (A priori, in \emph{loc.~cit.}, the finite group may vary across different connected components of the covering. One can make them the same by taking the product of all the groups and multiplying every connected component accordingly.) The \v{C}ech nerves of the respective covers, truncated after triple intersections, can be identified with the following diagram
    \begin{equation}\label{eq:cech-morita}
        \begin{tikzcd} \lbrack (V \times_{\Sstack_K} V \times_{\Sstack_K} V) / (H \times H \times H) \rbrack \arrow[r, shift left=6pt] \arrow[r] \arrow[r, shift right=6pt] \arrow[d] & \lbrack (V \times_{\Sstack_K} V) / (H \times H) \rbrack \arrow[r, shift left=3pt] \arrow[r, shift right=3pt] \arrow[d] & \lbrack V\!/H\rbrack \arrow[d] \\
        U \times_{S_K} U \times_{S_K} U \arrow[r, shift left=6pt] \arrow[r] \arrow[r, shift right=6pt] & U \times_{S_K} U \arrow[r, shift left=3pt] \arrow[r, shift right=3pt] & U \end{tikzcd}
    \end{equation}
    Here, $H \times H$ and $H \times H \times H$ again act symplectically and the vertical maps are coarse moduli spaces.

    For the rest of this proof, we shall write $V^k$ for the $k$-fold fibre product of $V$ over $\Sstack_K$, and $U^k$ for the $k$-fold fibre product of $U$ over $S_K$, respectively. Further, we shall denote horizontal morphisms on the right, from top to bottom, by $v_1$, $v_2$, $u_1$, and $u_2$, and the horizontal morphisms on the left, from top to bottom, by $v_{12}$, $v_{13}$, $v_{23}$, $u_{12}$, $u_{13}$, and $u_{23}$. 
    
    The functors introduced in Lemma \ref{lem:nc-orbifold-equivalence}(\ref{lem:nc-orbifold-equivalence:1}), together with pullback along the morphisms appearing in (\ref{eq:cech-morita}), form the following diagram.
    \[\begin{tikzcd}[column sep=tiny]
        \Coh(h_* \Ealg|_{U^3}) \arrow[rd] \arrow[dd] && \Coh(h_* \Ealg|_{U^2}) \arrow[dd, near end] \arrow[ll, shift left=6pt] \arrow[ll] \arrow[ll, shift right=6pt] \arrow[rd] && \Coh(h_* \Ealg|_U) \arrow[dd, near end] \arrow[ll, shift left=3pt] \arrow[ll, shift right=3pt] \arrow[rd] \\
        & \Coh(\lbrack V^3 \!/ H^3 \rbrack) \arrow[ld] && \Coh(\lbrack V^2 \!/ H^2 \rbrack) \arrow[ll, shift left=6pt, crossing over]  \arrow[ll, shift right=6pt, crossing over] \arrow[ll, crossing over] \arrow[ld] && \Coh(\lbrack V\!/H\rbrack) \arrow[ll, shift left=3pt, crossing over] \arrow[ll, shift right=3pt, crossing over] \arrow[ld] \\
        \Coh(U^3) && \Coh(U^2) \arrow[ll, shift left=6pt] \arrow[ll] \arrow[ll, shift right=6pt]  && \Coh(U) \arrow[ll, shift left=3pt] \arrow[ll, shift right=3pt]
    \end{tikzcd}\]
    Recall that a descent datum for $\Coh(h_* \Ealg)$ consists of an element \[F \in \Coh(h_* \Ealg|_U) \; ,\] together with an isomorphism in $\Coh(h_* \Ealg|_{U^2})$, \[u_1^* F \xlongrightarrow{\phi_F} u_2^* F \; ,\] subject to the cocycle condition, which is the commutativity of the following diagram in $\Coh(h_* \Ealg|_{U^3})$.
    \[\begin{tikzcd}[column sep=large]
        u_{12}^*u_1^*F \arrow[r, "u_{12}^* \phi_F"] \arrow[d, equal] & u_{12}^* u_2^* F = u_{23}^* u_2^* F \arrow[r, "u_{23}^* \phi_F"] & u_{23}^*u_2^*F \arrow[d, equal] \\
        u_{13}^*u_1^* F \arrow[rr, "u_{13}^* \phi_F"] && u_{13}^*u_2^*F
    \end{tikzcd}\] 
    A morphism of descent data $(F, \phi_F)$ and $(G, \phi_G)$ consists of a homomorphism $F \xrightarrow{\chi} G$ such that the square
    \[\begin{tikzcd}
        u_1^*F \arrow[r, "u_1^*(\chi)"] \arrow[d, "\phi_F"] & u_1^* G \arrow[d, "\phi_G"] \\
        u_2^*F \arrow[r, "u_2^*(\chi)"] & u_2^* G
    \end{tikzcd}\]
    commutes. Descent data and their morphisms are defined analogously for $\Coh(S_K)$ and $\Coh(\Sstack_K)$, where for the latter we need to replace $u$ by $v$. All three categories satisfy descent, meaning that $\Coh(h_* \Ealg)$, $\Coh(\Sstack_K)$ and $\Coh(S_K)$ are equivalent to their respective categories of descent data for the given \'etale covers. Hence, in order to construct the postulated diagram, it suffices to construct it for the respective categories of descent data.

    Now we argue that the functors $\Omega_V$ and $\Omega_U$ defined in Lemma \ref{lem:nc-orbifold-equivalence}, along with the various natural transformations and compatibilities, define functors between the various categories of descent data, which we will again denote by $\Omega_V$ and $\Omega_U$. We start with $\Omega_V$, which we first define on objects. Take a descent datum $(F, \phi_F)$ for $\Coh(h_* \Ealg)$, then we define descent data $\Omega_V(F, \phi_F)$ as consisting of the object $\Omega_V(F)$, given by Lemma \ref{lem:nc-orbifold-equivalence}(\ref{lem:nc-orbifold-equivalence:1}), together with the homomorphism on top making the following diagram commute.
    \[\begin{tikzcd}[column sep=large]
        v_1^*\Omega_V(F) \arrow[r] \arrow[d, "\sim"', "\psi_{v_1}(F)"] & v_2^*\Omega_V(F) \arrow[d, "\sim"', "\psi_{v_2}(F)"] \\ \Omega_{V^2}u_1^*(F) \arrow[r, "\Omega_{V^2}(\phi_F)", "\sim"'] & \Omega_{V^2}u_2^*(F)
    \end{tikzcd}\]
    Here, we used the natural transformation from Lemma \ref{lem:nc-orbifold-equivalence}(\ref{lem:nc-orbifold-equivalence:2}). One can now check using the compatibilities in Lemma \ref{lem:nc-orbifold-equivalence}(\ref{lem:nc-orbifold-equivalence:3}) that $\Omega_V(F, \phi_F)$ again satisfies the cocycle condition, thus making it a descent datum. Further, it is straight-forward to check using the naturality of $\phi_{v_1}$ and $\phi_{v_2}$ that $\Omega_V$ turns morphisms of descent data into morphisms of descent data, concluding the definition of the functor $\Omega_V$ for descent data. The definition of $\Omega_U$ for descent data is entirely analogous.

    Note that $\epsilon_*$, which is already defined as a functor $\Coh(\Sstack_K) \to \Coh(S_K)$, is defined as a functor of descent data as it commutes with pullback along \'etale covers. (This was shown in Lemma \ref{lem:nc-orbifold-equivalence}(\ref{lem:nc-orbifold-equivalence:2}), but can also be viewed as a version of flat base change for DM stacks, since the squares in (\ref{eq:cech-morita}) are Cartesian.) Using the compatibility between $\omega_U$, $\omega_{U'}$, $\psi_u$ and $\psi_v$ from Lemma \ref{lem:nc-orbifold-equivalence}(\ref{lem:nc-orbifold-equivalence:2}), one can see that the natural transformation $\omega_U$ induces a natural transformation of functors of descent data.

    Finally, since $e_0 h_* \mathcal{V} \cong \Reg_{S_K}$, we have an isomorphism, natural in $F$: \[\Omega_U(F) = \SHom(h_* \mathcal{V}|_U, \Reg_U) \otimes_{h_* \Ealg|_U} F \cong e_0 h_*\Ealg|_U \otimes_{h_* \Ealg|_U} F \cong e_0 F\] Hence, the functor defined by $\Omega_U$ is isomorphic to the cornering functor $e_0$.
\end{proof}

\begin{remark}
    Our proof of Theorem \ref{thm:nc-orbifold-equivalence} shows more generally that for any surface $X$ with ADE singularities, equipped with a full reflexive sheaf $\mathcal{W}$, the canonical stacky resolution $\mathscr{X} \to X$ is Morita equivalent to the noncommutative resolution $\SEnd_X(\mathcal{W})$ over $X$.
\end{remark}

\section{Hilbert schemes and quiver varieties}\label{sec:quot}

\subsection{Hilbert schemes of points}\label{sec:hilbs}

Recall that $\Hilb^n(X)$, the Hilbert scheme of $n$-points on a scheme $X$, is the fine moduli space of quotients $\Reg_X \twoheadrightarrow \Reg_Z$, where $\Reg_Z$ has zero-dimensional support and $\dim H^0(\Reg_Z)=n$. Since $S$ is a smooth surface, $\Hilb^n(S)$ is smooth by \cite[Theorem 2.4]{fogarty_families_68}.

For general $K \subseteq I \setminus \{0\}$, $\Hilb^n(S_K)$ is singular but irreducible, as was shown by Zheng \cite{zheng_irreducibility-hilb-kleinian_23}. It is an open question whether the natural scheme structure on $\Hilb^n(S_K)$ is always reduced (Craw--Yamagishi \cite{craw_yamagishi_hilb-can_26} show that it is for $n \leq 7$). As reducedness is necessary in the construction of the isomorphism at the bottom of Theorem \ref{thm:isom-quot-nqv}, we will henceforth equip $\Hilb^n(S_K)$ with its underlying reduced scheme structure.

The notion of a Hilbert scheme was generalised to DM stacks by Olsson--Starr \cite{olsson_starr_quot-dm_03}. Their results imply that there is a fine moduli space $\Hilb(\Sstack_K)$ whose connected components are quasi-projective schemes parametrising quotients $\Reg_{\Sstack_K} \twoheadrightarrow F$. 

\begin{definition}
    We define \[\Hilb^n(\Sstack_K) \subseteq \Hilb(\Sstack_K)\] to be the open and closed subscheme parametrising zero-dimensional quotients $\Reg_{\Sstack_K} \twoheadrightarrow F$, where
    \begin{enumerate}
        \item for each $\C$-point $x$ of $\Sstack_K$ with (possibly trivial) isotropy group $\Gamma_x$, and direct summand $F_x$ of $F$ supported at $x$, the $\Gamma_x$-representation underlying $F_x$ is isomorphic to $l_x$ times the regular representation for some non-negative integer $l_x$;

        \item with $l_x$ given as above, $\sum_{x \in S_K} l_x = n$.
    \end{enumerate}
\end{definition}

For $K=\emptyset$, the first condition is automatically satisfied for $l_x = \dim H^0(F_x)$. Hence, in this case, the component defined above is the usual Hilbert scheme of $n$-points on $S$. In general, the first condition is equivalent to requiring that the pullback of $F$ along any local quotient presentation \[U \to [U\!/\Gamma_x] \to \Sstack_K\] is a regular representation of $\Gamma_x$.

We also consider the following notion of an equivariant Hilbert scheme.

\begin{definition}
    Let $H$ be a finite group acting on a quasi-projective scheme $Y$. Let $n \in \N$. Define \[\GHilb{nH}(Y)\] to be the open and closed subscheme in the $H$-fixed point subscheme $\Hilb^{n|H|}(Y)^H \subset \Hilb^{n|H|}(Y)$ consisting of $H$-invariant subschemes $Z$ of $Y$ such that:
    \begin{enumerate}
        \item For any $H$-invariant open subset $U \subset Y$, $H^0(\Reg_Z|_U)$, as a representation of $H$, is isomorphic to a direct sum of regular representations;
        
        \item $H^0(\Reg_Z)$, as a representation of $H$, is isomorphic to $n$ times the regular representation.
    \end{enumerate}
\end{definition}

The first condition above is equivalent to requiring that the space of sections of the open and closed subscheme of $Z$ supported along any $H$-orbit is a multiple of the regular representation. It follows from the second condition in the case where there is at most one non-free $H$-orbit. In general, however, the first does not follow from the second condition and we need it to prove the following statement.

\begin{proposition}\label{prop:hilb-stack-equiv}
    Suppose we have a global quotient presentation $\Sstack_K=[Y\!/H]$ (as, for instance, in Example \ref{ex:morita-normal-subgp}). Then, identifying sheaves on $\Sstack_K$ with $H$-equivariant sheaves on $Y$ defines an isomorphism \[\Hilb^n(\Sstack_K) \cong \GHilb{nH}(Y) \; .\]
\end{proposition}
\begin{proof}
    The equivalence \[\Coh(\Sstack_K) = \Coh([Y\!/H]) = \Coh_{\Gamma}(Y)\] already identifies finite-length quotients of the structure sheaf over $\Sstack_K$ and finite-length $H$-equivariant quotients of the structure sheaf in $Y$. It will suffice to show that the local condition -- to be $l_x$ times the regular representation over a point $x \in S_K$ -- agrees in the two settings.

    Let $y \in Y$ be in the preimage of $x \in S_K$ and have stabiliser subgroup $\Stab_H(y) \subseteq H$. Then the orbit $Hy$ is isomorphic, as an $H$-set, to $H/\Stab_H(y)$. Hence, any $H$-equivariant sheaf $G_x$ supported on $Hy$ is induced from its component $G_y$ supported at $y$: \[H^0(G_x) \cong \C\!H \otimes_{\Stab_H(y)} H^0(G_y) \; .\] Let $F_x$ be the element of $\Coh(\Sstack_K)$ corresponding to $G_x$. Since $\Sstack_K=[Y\!/H]$, we have an identification $\Stab_H(y) \cong \Gamma_x$, and, under this identification, the $\Gamma_x$-representation underlying $F_x$ is $H^0(G_y)$. Hence, the claim follows from the fact that a $\Gamma_x$-representation is $l_x$ times the regular representation if and only if its induced $H$-representation is $l_x$ times the regular representation.
\end{proof}

\subsection{Noncommutative Quot schemes}\label{sec:quots}

Let $v \in \N^I$. Consider the quasi-projective scheme \[\Quot_J^v \coloneqq \Quot_{h_*\!\Ealg}^v(h_* \mathcal{V}) \; ,\] which is the fine moduli space for quotients $h_*\mathcal{V} \twoheadrightarrow F$ in the category $\Coh(h_* \Ealg)$ such that $F$ has zero-dimensional support and dimension vector $v$ (that is, $H^0(e_i F)$ has dimension $v_i$ for all $i \in I$). To see that such a scheme exists, one may identify $\Quot_J^v$ with an open and closed subscheme of the closed subscheme parametrising quotients over $h_* \Ealg$ inside the standard Quot scheme $\Quot_{\Reg_{S_K}}^{\sum_i v_i}(h_* \mathcal{V})$.

We will be mostly interested in the case where $v$ is of the form $n\delta$ for some $n \in \N$. In this case, we have the following basic result.
\begin{lemma}\label{lem:dim-locally-multiple-delta}
    Let $h_*\mathcal{V} \twoheadrightarrow F$ be a quotient corresponding to a point in $\Quot_J^{n \delta}$. Let \[F = \bigoplus_{x \in S_K} F_x\] be the decomposition of $F$ into components $F_x$, each supported at a single point $x \in S_K$. Then each $F_x$ has dimension vector a multiple of $\delta$.
\end{lemma}
\begin{proof}
     Fix $x \in S_K$. Let $K_x \subseteq K \subset I$ be the (possibly empty) set of curves in $S$ that are mapped to $x$ by $h$ and set $J_x \coloneqq I \setminus K_x$. Then $h_* \mathcal{V}_{J_x} = \bigoplus_{i \in J_x} h_* \mathcal{V}_i$ is locally-free around $x$. Therefore, we have \[e_{J_x} F_x = h_* \mathcal{V}_{J_x} \otimes \SHom_{h_* \Ealg_{J_x}}(\mathcal{V}_{J_x}, e_{J_x} F_x) \; ,\] where $\SHom_{h_* \Ealg_{J_x}}(\mathcal{V}_{J_x}, e_{J_x} F_x)$ is a $\Reg_{S_K}$-module of length $l_x$. Hence, for $i \in J_x$, \[\dim H^0(e_iF_x) = \dim H^0(h_* \mathcal{V}_i \otimes \SHom_{h_* \Ealg_{J_x}}(\mathcal{V}_{J_x}, e_{J_x} F_x)) = l_x \delta_i \; .\]

    On the other hand, for $i \in K_x$, $h_* \mathcal{V}_i$ is locally-free around all other points $x' \neq x$ in $S_K$. We repeat the argument above for every such point and obtain \[\dim(H^0(e_iF_x)) = \dim H^0(e_iF) - \dim H^0\left(\bigoplus_{x' \neq x} e_i F_{x'}\right) = \left(n -\sum_{x' \neq x} l_{x'}\right) \delta_i = l_x \delta_i \; ,\] as $l_x = \dim H^0(e_0 F_x)$ and so $\sum_{x \in S_K} l_x = n$.
\end{proof}

\begin{proposition}\label{prop:quot-hilb-stack}
    The Morita equivalence of Theorem \ref{thm:nc-orbifold-equivalence} realises $\Quot_J^v$ as an open and closed subscheme of $\Hilb(\Sstack_K)$. For $v=n\delta$, this subscheme is precisely $\Hilb^n(\Sstack_K)$, and we have the following commutative diagram.
    \begin{equation}\label{eq:quot-to-hilb}
        \begin{tikzcd}[column sep=tiny]
        \Quot_J^{n\delta} \arrow[rr, "\sim"]\arrow[dr, "e_0"'] && \Hilb^n(\Sstack_K) \arrow[dl, "\epsilon_*"] \\
        & \Hilb^n(S_K) &
    \end{tikzcd}
    \end{equation}
    Both morphisms to $\Hilb^n(S_K)$ are crepant resolutions of singularities.
\end{proposition}
\begin{proof}
    The Morita equivalence of Theorem \ref{thm:nc-orbifold-equivalence} maps $h_* \mathcal{V}$ to the structure sheaf and preserves quotients and flat families. Since the dimension vector is locally constant in such families, the functor represented by $\Quot_J^v$ is isomorphic to an open and closed subfunctor of the functor represented by $\Hilb(\Sstack_K)$.
    
    In the special case of $v=n\delta$, it was shown in Lemma \ref{lem:dim-locally-multiple-delta} that any quotient $h_*\mathcal{V} \twoheadrightarrow F$ corresponding to a point in $\Quot_J^{n\delta}$ decomposes as a direct sum $F = \bigoplus_{x \in S_K} F_x$, where $F_x$ is supported at $x$ and has dimension vector $l_x \delta$. Under the Morita equivalence of Theorem \ref{thm:nc-orbifold-equivalence}, the component $F_x$ corresponds to a sheaf on $\Sstack_K$ supported on $x$ whose underlying $\Gamma_x$-representation is isomorphic to $l_x$ times the regular representation. This follows from the fact that in Theorem \ref{thm:morita-general}, the rank vectors of the reflexive sheaves whose endomorphism algebras are shown to be Morita equivalent transform in the same way as dimension vectors of finite-length sheaves. This implies that the image of $\Quot_J^{n \delta}$ in $\Hilb(\Sstack_K)$ is precisely $\Hilb^n(\Sstack_K)$.
    
    Similarly, the functors $e_0$ and $\epsilon_*$ from Theorem \ref{thm:nc-orbifold-equivalence} preserve flat families, and $e_0$ maps $h_* \mathcal{V}$ to $\Reg_{S_K}$ and a quotient of dimension vector $n \delta$ to a quotient of length $n$. Hence, we obtain the desired diagram, and it commutes because the underlying functors commute up to natural isomorphism.

    Finally, we show that the morphism $\Hilb^n(\Sstack_K) \to \Hilb^n(S_K)$ is a crepant resolution. The statement is \'etale-local on $\Hilb^n(S_K)$, so it will suffice to prove it after base change to an \'etale covering. Consider a point $z$ of $\Hilb^n(S_K)$, representing a subscheme of $S_K$ of length $l_x$ at the point $x$. Let $\Gamma_x < \SL(2)$ be the group associated with the ADE singularity at $x$ (or $\{1\}$ whenever $S_K$ is smooth at $x$). Then, using a local quotient presentations of $\Sstack_K$ and Theorem \ref{thm:nc-orbifold-equivalence}, \'etale-locally in $\Hilb^n(S_K)$ around $z$, the morphism (\ref{eq:quot-to-hilb}) is isomorphic to a product of quotient-scheme-morphisms (in terminology of \cite{brion_invariant-hilb_13}) \[\GHilb{l_x\Gamma_x}(\Aff^2) \longrightarrow \Hilb^{l_x}(\Singa{\Gamma_x}) \; .\] All these morphisms are crepant resolutions by \cite[Theorem 1.1]{cggs_punctual-hilb_21}, and so is their product.
\end{proof}

\begin{corollary}\label{cor:quot-hilb-stack}
    \ \begin{enumerate}
        \item\label{cor:quot-hilb-stack:1} For $n \in \N$, $\Quot_J^{n \delta}$ is birational to $\Hilb^n(S)$.
        
        \item\label{cor:quot-hilb-stack:2} $\Quot_J^{\delta} \cong \Hilb^1(\Sstack_K) \cong S$.
    \end{enumerate}
\end{corollary}
\begin{proof}
    \begin{enumerate} 
        \item Both $\Hilb^n(\Sstack_K) \cong \Quot_J^{n\delta}$ and $\Hilb^n(S) \cong \Quot_I^{n\delta}$ contain as a non-empty open subscheme the locus parametrising modules supported on the shared open subset $S \setminus \{f^{-1}(o)\} \cong S_K \setminus \{g^{-1}(o)\}$.

        \item For $n=1$, Proposition \ref{prop:quot-hilb-stack} states that $\Quot_J^{\delta} \cong \Hilb^1(\Sstack_K)$ is a crepant resolution of $\Hilb^1(S_K) \cong S_K$, hence, it is isomorphic to the unique crepant resolution $S$ of $S_K$. \qedhere
    \end{enumerate} 
\end{proof}

\subsection{Nakajima quiver varieties}\label{sec:nqvs}

Let $\Bar{\Pi}$ be the framed preprojective algebra associated to the affine ADE Dynkin diagram with vertex set $I$, with one framing vertex named $\infty$ connected to $0$ via two opposing arrows (see Figure \ref{fig:framed-quiver}). The underlying vertex set of $\Bar{\Pi}$ is denoted $\Bar{I} = \{\infty\} \cup I$. The unframed preprojective algebra $\Pi$ can be obtained as the quotient of $\Bar{\Pi}$ by the two-sided ideal generated by $e_{\infty}$, the idempotent associated with the framing vertex.

\begin{figure}
    \centering
    \begin{tikzpicture}[framing/.style={rectangle,draw=black,thick,inner sep=0pt,minimum size=5mm},ivert/.style={circle,draw=black,thick,inner sep=0pt,minimum size=5mm}]
        \node (f) at (1.5,.8) [framing] {$\infty$};
        \node (tl) at (3,.8) [ivert] {$0$};
        \node (bl) at (3,-.8) [ivert] {};
        \node (l) at (4,0) [ivert] {};
        \node (r) at (5.3,0) [ivert] {};
        \node (tr) at (6.3,.8) [ivert] {};
        \node (br) at (6.3,-.8) [ivert] {};
        
        \draw[->] (f) to [bend left=20] (tl);
        \draw[->] (tl) to [bend left=20] (f);
        \draw[->] (tl) to [bend left=20] (l);
        \draw[->] (l) to [bend left=20] (tl);
        \draw[->] (bl) to [bend left=20] (l);
        \draw[->] (l) to [bend left=20] (bl);
        \draw[->] (l) to [bend left=20] (r);
        \draw[->] (r) to [bend left=20] (l);
        \draw[->] (tr) to [bend left=20] (r);
        \draw[->] (r) to [bend left=20] (tr);
        \draw[->] (br) to [bend left=20] (r);
        \draw[->] (r) to [bend left=20] (br);
    \end{tikzpicture}
    \caption{Quiver underlying the framed preprojective algebra $\Bar{\Pi}$ of type $D_5$.}
    \label{fig:framed-quiver}
\end{figure}
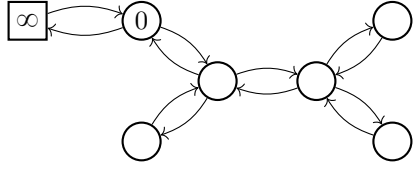

For $v \in \N^I$, we define the space of stability vectors \[\Theta_v \coloneqq \left\{\theta \in \Q^{\Bar{I}} \; \big| \; \theta \cdot (1,v) = 0 \right\} \, .\] Let $\theta \in \Theta_v$, and let $\Bar{V}$ be a $\Bar{\Pi}$-module of dimension vector $(1,v)$. Then $\Bar{V}$ is called $\theta$-semistable if, for any submodule $\Bar{V}' \subseteq \Bar{V}$, \[\theta \cdot \dim(\Bar{V}') \geq 0 \; .\] $\Bar{V}$ is called $\theta$-stable if it is $\theta$-semistable and the above inequality is strict for proper, non-zero submodules. With respect to this stability condition, $\Bar{\Pi}$-modules admit Harder-Narasimhan filtrations and a notion of S-equivalence. We fix $\Lambda_0 = (1,0,\ldots,0)$. Then the Nakajima quiver variety \[\NQV_{\theta}(v,\Lambda_0)\] is a coarse moduli space for S-equivalence classes of $\theta$-semistable $\Bar{\Pi}$-modules of dimension $(1,v)$. In section \ref{sec:quiver-mods}, we will give more details on the construction of quiver varieties in general, of which $\NQV_{\theta}(v,\Lambda_0)$ are special cases.

The space $\Theta_v$ admits a wall-and-chamber structure, where $\theta$-stability and $\theta$-semistability are equivalent as long as $\theta$ is in the interior of a chamber, and $\NQV_{\theta}(v,\Lambda_0 ) \cong \NQV_{\theta'}(v,\Lambda_0 )$ for $\theta$ and $\theta'$ in the interior of the same chamber. For vectors of the form $v = n \delta$, the wall-and-chamber structure was determined by Bellamy--Craw \cite{bellamy_craw_birational-symplectic_20}. To state their result, we identify $\Theta_v$ with the Cartan subalgebra of the affine Lie algebra, and let $\Phi^+ \subset (\Theta_v)^{\vee}$ denote the set of positive roots for the finite-type Cartan subalgebra. The identification is done in such a way that $\theta(\alpha_i)$ is the $i$-th entry of $\theta \in \Theta_v$ (in terms of the definition of $\Theta_v$ as a subspace of $\Q^{\Bar{I}}$), where $\alpha_i$ is the root corresponding to the vertex $i$ of the Dynkin diagram, and $\theta(\alpha_i)$ denotes the natural pairing between $\theta \in \Theta_v$ and $\alpha_i \in \Phi^+ \subset (\Theta_v)^{\vee}$.

\begin{theorem}[Bellamy--Craw]\label{thm:nqv-walls}
    Consider the following set of hyperplanes in $\Theta_{n\delta}$: \[\mathcal{A} \coloneqq \{\delta^{\perp}\} \cup \{(m \delta \pm \alpha)^{\perp} \; | \; 0 \leq m < n, \; \alpha \in \Phi^+ \}.\] For any $\theta \in \Theta_{n\delta} \setminus \bigcup_{A \in \mathcal{A}} A$,  $\NQV_{\theta}(n \delta,\Lambda_0 )$ is a smooth, Hyperk\"ahler variety, projective over the affine variety $\NQV_0(n \delta, \Lambda_0 )$, and a fine moduli space for $\theta$-stable $\Bar{\Pi}$-modules.
\end{theorem}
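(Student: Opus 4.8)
\emph{Proof strategy.} The plan is to obtain $\NQV_\theta(n\delta,w)$ from King's GIT construction of moduli of quiver representations and to reduce each of the four assertions to a single combinatorial fact about $\theta$.

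\textbf{Step 1 (GIT construction and projectivity over the affine quotient).} I would let $\overline{Q}$ be the doubled framed quiver of Figure~\ref{fig:framed-quiver}, let $R=\Rep(\overline{Q},(1,n\delta))$ be the symplectic space of its representations with the framing space at $\infty$ fixed to be $\C$ and the space at $i\in I$ of dimension $n\delta_i$, and let $G=\prod_{i\in I}\GL(n\delta_i)$ act in the usual way. The holomorphic moment map $\mu\colon R\to\prod_{i\in I}\mathfrak{gl}(n\delta_i)^{\vee}$ cuts out $\mu^{-1}(0)=\Rep(\Bar\Pi,(1,n\delta))$, the locus of representations satisfying the preprojective relations, and a vector $\theta\in\Theta_{n\delta}$ gives a character of $G$ for which GIT (semi)stability coincides with the module-theoretic notion of the text. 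Then $\NQV_\theta(n\delta,w)=\mu^{-1}(0)^{\theta\text{-ss}}/\!\!/_\theta G$, whereas $\NQV_0(n\delta,w)=\mu^{-1}(0)/\!\!/G=\Spec\bigl(\C[\mu^{-1}(0)]^{G}\bigr)$ is affine, so the tautological morphism $\NQV_\theta\to\NQV_0$ is projective, being $\Proj$ of a finitely generated graded ring over $\Spec$ of its degree-zero part. This already settles the clause ``projective over the affine variety $\NQV_0$''.

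\textbf{Step 2 (the wall statement).} Everything else follows once one knows that, for $\theta\notin\bigcup_{A\in\mathcal A}A$, no $\Bar\Pi$-module of dimension $(1,n\delta)$ is strictly $\theta$-semistable, i.e.\ $\theta\cdot\dim(\Bar V')\neq0$ for every proper nonzero submodule $\Bar V'$ of every $\theta$-semistable such $\Bar V$. I would argue: if $\Bar V$ is $\theta$-semistable but not $\theta$-stable, it has a proper nonzero $\theta$-stable submodule $\Bar V'$ with $\theta\cdot\dim(\Bar V')=0$; since $\theta$ annihilates $(1,n\delta)$ and the framing dimension at $\infty$ is $1$, one may assume $\Bar V'$ carries no framing, so $\dim(\Bar V')=(0,\beta)$ with $\beta\in\N^{I}$, $\beta\le n\delta$, and by Crawley-Boevey's description of dimension vectors of stable (framed) preprojective modules $\beta$ is an affine positive root. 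Running through Kac's list of affine positive roots dominated by $n\delta$ leaves only $\beta=m\delta$ and $\beta=m\delta\pm\alpha$ with $\alpha\in\Phi^{+}$; since off $\delta^{\perp}$ one automatically has $\theta\cdot(m\delta)\neq0$, this forces $\theta$ onto a hyperplane $(m\delta\pm\alpha)^{\perp}$ of $\mathcal A$. For the converse — that every $\theta$ on a wall of $\mathcal A$ genuinely carries a strictly $\theta$-semistable module — one writes down an iterated extension of $\theta$-stable modules of the relevant dimensions. The precise index ranges, hence the exact arrangement $\mathcal A$, are \cite{bellamy_craw_birational-symplectic_20}, whose statement I would quote at this step.

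\textbf{Step 3 (smoothness, freeness, fineness, metric).} Assume now $\theta\notin\bigcup_{A\in\mathcal A}A$, so every $\theta$-semistable point of $\mu^{-1}(0)$ of dimension $(1,n\delta)$ is $\theta$-stable and has endomorphism ring $\C$. Because the framing component at $\infty$ is one-dimensional and held fixed, the stabiliser in $G$ of the corresponding point $x\in\mu^{-1}(0)^{s}$ is trivial, so $G$ acts freely on $\mu^{-1}(0)^{s}$. At such $x$ the cokernel of $d\mu_{x}$ is dual to the (vanishing) stabiliser Lie algebra, so $d\mu_{x}$ is surjective, $\mu^{-1}(0)^{s}$ is smooth of the expected dimension, and the free geometric quotient $\NQV_\theta=\mu^{-1}(0)^{s}/G$ is smooth, of dimension $2(w\cdot n\delta)-(n\delta,n\delta)=2n$, consistent with Proposition~\ref{prop:quot-hilb-stack}. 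Freeness also lets the tautological family of representations on $\mu^{-1}(0)^{s}$ descend to a universal family on $\NQV_\theta$, so the coarse moduli space of $\theta$-stable $\Bar\Pi$-modules is in fact fine. Finally, by the hyperk\"ahler-quotient description of Nakajima quiver varieties \cite{kronheimer_ale-hk-quotients_89,nakajima_instantons_94}, $\NQV_\theta$ is diffeomorphic to the hyperk\"ahler quotient of $R$ by a maximal compact subgroup of $G$ at the real moment-map level prescribed by $\theta$ and complex level $0$; the same freeness computation shows that compact group acts freely on that level set, so the quotient carries a hyperk\"ahler metric and $\NQV_\theta$ is a smooth hyperk\"ahler variety.

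\textbf{Main obstacle.} Essentially all the content lies in Step 2 — identifying the non-generic locus with exactly $\mathcal A$ — which combines the root-theoretic classification of dimension vectors of stable preprojective modules with the enumeration of affine positive roots dominated by $n\delta$; Steps 1 and 3 are the standard GIT and hyperk\"ahler-quotient package for a generic stability parameter. As Step 2 is precisely the result of \cite{bellamy_craw_birational-symplectic_20}, in the write-up I would cite their theorem for the arrangement $\mathcal A$ and only indicate the reduction sketched here.
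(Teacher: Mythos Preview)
The paper does not prove this theorem at all: it is stated with the attribution ``Theorem [Bellamy--Craw]'' and no proof environment follows; the result is simply quoted from \cite{bellamy_craw_birational-symplectic_20} as input to the rest of Section~\ref{sec:quot}. So there is no ``paper's own proof'' to compare against.

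Your sketch is a reasonable outline of the standard argument (GIT/hyperk\"ahler-quotient package for Steps~1 and~3, Crawley-Boevey's root-theoretic classification for Step~2), and you yourself identify Step~2 as the substantive content and say you would cite \cite{bellamy_craw_birational-symplectic_20} for it. That is exactly what the paper does, only without the surrounding scaffolding. If you were writing this paper, the appropriate move would be to state the theorem with attribution and move on, as the author has done; the detailed reduction you give is correct but unnecessary here.
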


A particular collection of chambers is defined by Craw--Wye \cite{craw_wye_hilb-crepant-partial_25-v2} as follows. Let $K \subseteq I \setminus \{0\}$, $J=I \setminus K$, and define \[C_K \coloneqq \left\{\theta \in \Theta_{n\delta} \; \Bigg| \; \begin{matrix*}[l] \theta(\delta\vert_J) > 0 \; , & \\ \theta(\alpha_j) > (n-1)\theta(\delta) & \text{for} \; j \in J \setminus \{0\} \; , \\ \theta(\alpha_k) > 0 & \text{for} \; k \in K \, , \end{matrix*} \right\} \] where $\delta\vert_J = \sum_{i \in J} \delta_i \alpha_i$, and \[\sigma_K \coloneqq \left\{\theta \in \Theta_{n\delta} \; \Bigg| \; \begin{matrix*}[l] \theta(\delta) \geq 0 \; , & \\ \theta(\alpha_j) \geq (n-1)\theta(\delta) & \text{for} \; j \in J \setminus \{0\} \; , \\ \theta(\alpha_k) = 0 & \text{for} \; k \in K \, , \end{matrix*} \right\} \] which is a component of the boundary of $C_K$. (Note that in the definition of $\sigma_K$ it does not matter whether we write $\theta(\delta)$ or $\theta(\delta\vert_J)$.) All the $C_K$ and $\sigma_K$ are subsets of the simplicial cone \[F \coloneqq \left\{\theta \in \Theta_{n\delta} \; \Big| \; \begin{matrix*}[l] \theta(\delta) \geq 0 \; , & \\ \theta(\alpha_i) \geq 0 & \text{for} \; i \in I \setminus \{0\} \; . \end{matrix*} \right\}\] In Figure \ref{fig:walls}, we draw one example of the wall-and-chamber structure in a transversal slice of the cone $F$ and highlight the regions defined above .

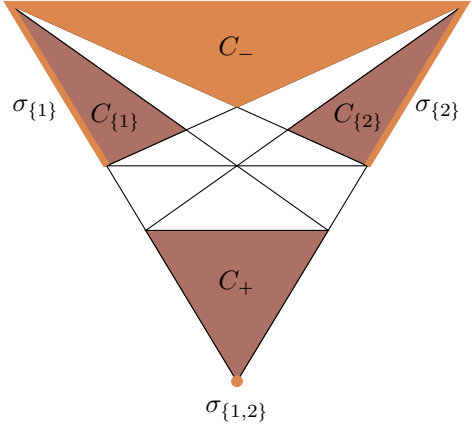
\begin{figure}[ht]
    \centering
    \begin{tikzpicture}
        \draw (0,0) -- (-3,5) -- (3,5) -- (0,0);
        \draw (-3,5) -- (1.2,2) -- (-1.2,2) -- (3,5);
        \draw (-3,5) -- (12/7,20/7) -- (-12/7,20/7) -- (3,5);
        
        \filldraw[fill=burntumber!70] (0,0) -- (-6/5,2) -- (6/5,2) -- cycle;
        \filldraw[fill=burntumber!70] (-3,5) -- (-2/3,10/3) -- (-12/7,20/7) -- cycle;
        \filldraw[fill=burntumber!70] (3,5) -- (2/3,10/3) -- (12/7,20/7) -- cycle;
        \filldraw[burntorange!70] (-3,5) -- (0,40/11) -- (3,5) -- cycle;
        \draw[draw=burntorange!70,line width=2.5pt] (-12/7,20/7) -- (-3,5) -- (3,5) -- (12/7,20/7);
        \filldraw[burntorange!70] (0,0) circle [radius=2pt];
        
        \draw (0,4.4) node {$C_-$};
        \draw (-1.6,3.5) node {$C_{\{1\}}$};
        \draw (-2.65,3.6) node {$\sigma_{\{1\}}$};
        \draw (1.6,3.5) node {$C_{\{2\}}$};
        \draw (2.65,3.6) node {$\sigma_{\{2\}}$};
        \draw (0,1.3) node {$C_+$};
        \draw (0,-0.4) node {$\sigma_{\{1,2\}}$};
    \end{tikzpicture}
    \caption{Transversal slice of the wall-and-chamber structure on the fundamental cone $F$ in type $A_2$, for $n=3$. Here, $I=\{0,1,2\}$, $C_+ = C_{I \setminus \{0\}}$, $C_- = C_{\emptyset}$, and $\sigma_{\emptyset}$ is the closure of $C_-$.}
    \label{fig:walls}
\end{figure}

The chamber $C_{\emptyset}$ is also referred to as $C_-$. A result proved by Kuznetsov \cite{kuznetsov_quiver-hilb_07} (and found independently by Haiman and Nakajima) establishes an isomorphism 
\begin{equation}
\label{eq:NQVforC-}
    \NQV_{\theta}(n \delta,\Lambda_0 ) \cong \Hilb^n(S) \quad \text{for} \quad \theta \in C_- \; .
\end{equation}
Recall also that $\Hilb^n(S) \cong \Quot_I^{n \delta}$. On the other hand, we have the chamber $C_+ \coloneqq C_{I \setminus \{0\}}$. For $\theta \in C_+$, a $\Bar{\Pi}$-module $\Bar{V}$ is $\theta$-stable if and only if it is generated by $e_{\infty} \Bar{V}$. By Theorem \ref{thm:nqv-walls}, $\NQV_{\theta}(n \delta,\Lambda_0 )$ is a fine moduli space for such modules, from which we deduce that 
\begin{equation}\label{eq:NQVforC+}
    \NQV_{\theta}(n \delta,\Lambda_0 ) \cong \Quot_{\{0\}}^{n \delta} \cong \Hilb^n([\Sing]) \quad \text{for} \quad \theta \in C_+ \; .
\end{equation}
Recent work of Craw--Gammelgaard--Gyenge--Szendr\H{o}i \cite{cggs_punctual-hilb_21} and Craw--Yamagishi \cite{craw_yamagishi_hilb-can_26} establishes an isomorphism between $\Hilb^n(\Sing)$ with the Nakajima quiver variety $\NQV_{\theta'}(n\delta, \Lambda_0)$ for $\theta'$ in the relative interior of $\sigma_{I \setminus \{0\}}$. Together with the isomorphism above, their isomorphism fits into the following commutative diagram.
\[\begin{tikzcd} \Hilb^n([\Sing]) \arrow[r, "\sim"] \arrow[d] & \NQV_{\theta}(n \delta,\Lambda_0 ) \arrow[d] \\ \Hilb^n(\Sing) \arrow[r, "\sim"] & \NQV_{\theta'}(n\delta, \Lambda_0 ) \end{tikzcd}\]

The following theorem contains a common generalisation of the isomorphisms in \eqref{eq:NQVforC-} and \eqref{eq:NQVforC+}. We postpone the proof to the following sections.
        
\begin{theorem}\label{thm:isom-quot-nqv}
    Let $I \supseteq J \ni 0$, $K \coloneqq I \setminus J$, $\theta_K \in C_K$, and $\theta'_K$ in the relative interior of $\sigma_K$. Then we have the following commutative diagram in which all morphisms are birational and projective, and the horizontal morphisms are isomorphisms. \[\begin{tikzcd} \Hilb^n(\Sstack_K) \arrow[r,"\nu", "\sim"'] \arrow[d,"\epsilon_*"] & \NQV_{\theta_K}(n \delta,\Lambda_0 ) \arrow[d] \\ \Hilb^n(S_K) \arrow[r, "\nu_J", "\sim"'] & \NQV_{\theta'_K}(n\delta, \Lambda_0 ) \end{tikzcd}\] Here, $\nu$ is induced, via the isomorphism of Proposition \ref{prop:quot-hilb-stack}, by the functor \[H^0 \colon \Coh(h_* \Ealg) \longrightarrow \Mod(\Pi) \; ,\] $\nu_J$ is induced by the functor \[\Hom(\mathcal{T}_J^{\vee},-) \colon \Coh(S_K) \to \Mod(\Pi_J) \; ,\] and the morphism on the right is a VGIT morphism.
\end{theorem}

\begin{corollary}
    The morphism $\Hilb^n(\Sstack_K) \to \Hilb^n(S_K)$ is the unique projective, crepant resolution of $\Hilb^n(S_K)$.
\end{corollary}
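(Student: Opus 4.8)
The plan is to establish two statements: that $\Quot_J^{n\delta}\to\Hilb^n(X_J)$ is a projective crepant resolution, and that it is the unique one. For the first, Theorem~\ref{thm:isom-quot-nqv} already supplies projectivity and birationality, and Proposition~\ref{prop:quot-hilb-stack}(\ref{prop:quot-hilb-stack:2}) gives smoothness of the source, so only crepancy needs an argument. Under the identifications $\Quot_J^{n\delta}\cong\NQV_\theta(n\delta,w)$ with $\theta\in C_K$ and $\Hilb^n(X_J)\cong\NQV_{\theta'}(n\delta,w)$ with $\theta'$ in the relative interior of $\sigma_K$ of Theorem~\ref{thm:isom-quot-nqv}, the morphism in question becomes the morphism between Nakajima quiver varieties for $\Bar{\Pi}$ induced by variation of GIT. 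This morphism is an isomorphism over a big open subset of its target and identifies the holomorphic symplectic forms there; since $\Hilb^n(X_J)\cong\NQV_{\theta'}(n\delta,w)$ has symplectic --- hence Gorenstein canonical --- singularities, it follows that $K_{\Quot_J^{n\delta}}$ is the pullback of $K_{\Hilb^n(X_J)}$, i.e.\ the resolution is crepant.

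For uniqueness, let $Z\to\Hilb^n(X_J)$ be any projective crepant resolution, and compose it with the morphism $\Hilb^n(X_J)\cong\NQV_{\theta'}(n\delta,w)\to\NQV_0(n\delta,w)\cong\Sym^n(X_0)$, which is a projective crepant partial resolution by the argument just given. The result is a projective crepant resolution of $\Sym^n(X_0)$; since $\Sym^n(X_0)=\Aff^{2n}/(\Gamma\wr S_n)$ is the symplectic quotient singularity to which the theorem of Bellamy--Craw \cite{bellamy_craw_birational-symplectic_20} underlying Theorem~\ref{thm:nqv-walls} applies, we get $Z\cong\NQV_{\theta''}(n\delta,w)$ over $\Sym^n(X_0)$ for some $\theta''$ in a chamber of $\mathcal{A}$. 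As all the varieties are normal and the morphisms between them proper and birational, the factorisation of $Z\to\Sym^n(X_0)$ through $\Hilb^n(X_J)$ must be the variation-of-GIT factorisation; hence $\NQV_{\theta'}(n\delta,w)$ is an intermediate contraction of $\NQV_{\theta''}(n\delta,w)\to\NQV_0(n\delta,w)$, which forces $\sigma_K$ to be a face of the closed chamber $\overline{C_{\theta''}}$.

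It then remains to pin down $\theta''$. The point $\theta'$ lies on precisely those walls of $\mathcal{A}$ of the form $\alpha^\perp$ with $\alpha$ a positive root supported on $K$, so near $\theta'$ the arrangement $\mathcal{A}$ is the reflection arrangement of the parabolic sub-root system on $K$, whose chambers form a single orbit under the parabolic subgroup $W_K\subseteq W$. Because $\delta$, $w$, and $\theta'\in\sigma_K$ are all fixed by $W_K$, Nakajima's reflection functors identify the quiver varieties $\NQV_\theta(n\delta,w)$ for $\theta$ in these chambers with one another, compatibly over both $\NQV_0(n\delta,w)$ and $\NQV_{\theta'}(n\delta,w)$; hence all of them are, over $\Hilb^n(X_J)$, the same crepant resolution --- the one attached to $C_K$, namely $\Quot_J^{n\delta}$. (The same conclusion can be read off from the analysis of the cones $C_K$ and $\sigma_K\subset F$ in \cite[Lemma 3.10]{craw_wye_hilb-crepant-partial_24-v1}.) Therefore $Z\cong\Quot_J^{n\delta}$ over $\Hilb^n(X_J)$ by Theorem~\ref{thm:isom-quot-nqv}, which is the claimed uniqueness.

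The main obstacle is this last step: one must be sure that the several chambers bordering the relative interior of $\sigma_K$ do not produce genuinely distinct crepant resolutions of $\Hilb^n(X_J)$ --- equivalently, that crossing any wall of $C_K$ meeting $\sigma_K$ is realised by an automorphism of the quiver variety rather than a nontrivial flop over $\Sym^n(X_0)$. Making this precise rests on the detailed wall-and-chamber geometry of Bellamy--Craw and Craw--Wye together with the combinatorics of the reflection functors; the remaining ingredients --- the realisation theorem for crepant resolutions of $\Sym^n(X_0)$, normality of the intermediate spaces, and the compatibility of symplectic forms under variation of GIT --- are standard.
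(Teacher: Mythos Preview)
Your argument is correct, but it takes a substantially more elaborate route than the paper's own proof. The paper simply invokes \cite[Theorem~1.1]{craw_wye_hilb-crepant-partial_25-v2}, which already states that $\Hilb^n(X_J)\cong\NQV_{\theta'}(n\delta,w)$ admits a \emph{unique} projective crepant resolution, namely $\NQV_{\theta}(n\delta,w)$ for $\theta\in C_K$; the corollary then follows in one line from Theorem~\ref{thm:isom-quot-nqv}. By contrast, you are in effect re-deriving that Craw--Wye uniqueness statement: you feed an arbitrary crepant resolution through Bellamy--Craw's classification over $\Sym^n(X_0)$, use the identification of the GIT fan with the Mori fan to force the corresponding chamber to border $\sigma_K$, and then argue via the $W_K$-action and Nakajima's reflection functors that all such chambers yield the same resolution over $\Hilb^n(X_J)$. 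Each of these steps is sound (your localisation of the arrangement at a generic point of $\sigma_K$ and the compatibility of reflection functors with the maps to $\NQV_{\theta'}$ and $\NQV_0$ are exactly what is needed), but they amount to reproving the cited result rather than using it. What your approach buys is a self-contained explanation of \emph{why} uniqueness holds; what the paper's approach buys is brevity, since the wall-crossing analysis you flag as the ``main obstacle'' is precisely what \cite{craw_wye_hilb-crepant-partial_25-v2} has already packaged.
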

\begin{proof}
    According to \cite[Theorem 1.1]{craw_wye_hilb-crepant-partial_25-v2}, for $\theta'_K$ in the relative interior of $\sigma_K$, $\Hilb^n(S_K) \cong \NQV_{\theta'_K}(n\delta, \Lambda_0 )$ has a unique projective, crepant resolution given by $\NQV_{\theta_K}(n \delta, \Lambda_0)$. Thus, the statement follows from Theorem \ref{thm:isom-quot-nqv}.
\end{proof}

\begin{corollary}\label{cor:quot-from-nqv}
    Let $I \supseteq J \ni 0$, $n \geq 1$.
    \begin{enumerate}
        \item\label{cor:quot-from-nqv:1} The complex manifolds underlying $\Hilb^n(\Sstack_K)$ for varying $K$ are diffeomorphic.
        \item\label{cor:quot-from-nqv:2} $\Db(\Hilb^n(\Sstack_K)) \cong \Db(\Hilb^n(S))$.
    \end{enumerate}
\end{corollary}
\begin{proof}
    Both statements follow via Theorem \ref{thm:isom-quot-nqv} from the corresponding statements relating Nakajima quiver varieties for various generic stability conditions: see \cite[Corollary 4.2]{nakajima_instantons_94} for (\ref{cor:quot-from-nqv:1}), and \cite[Theorem 5.1, Proposition 5.6]{halpern-leistner_combinatorial-derived_20} for (\ref{cor:quot-from-nqv:2}).
\end{proof}

\begin{example}\label{ex:isom-eq-hilb-nqv}
    In Examples \ref{ex:morita-preproj-equiv}, \ref{ex:morita-projective-mckay}, \ref{ex:morita-normal-subgp}, where we have an explicit description of $\Sstack_K$ as a global quotient, say, \[\Sstack_K = [Y\!/H] \; ,\] combining Proposition \ref{prop:hilb-stack-equiv} and Theorem \ref{thm:isom-quot-nqv} gives isomorphisms \[\NQV_{\theta_K}(n \delta,\Lambda_0 ) \cong \GHilb{nH}(Y)\] for $\theta_K \in C_K$. In particular, Example \ref{ex:morita-projective-mckay} gives \[\NQV_{\theta_K}(n \delta,\Lambda_0 ) \cong \GHilb{n\Prj\!\Gamma}(T^{\vee}\!\Prj^1) \, .\]
\end{example}

\section{The isomorphism between the singular moduli spaces}\label{sec:geometric}

Here we construct explicitly the isomorphism $\Hilb^n(S_K)\to \NQV_{\theta'_K}(n\delta, \Lambda_0)$ from Theorem~\ref{thm:isom-quot-nqv}, which was shown to exist in \cite{craw_wye_hilb-crepant-partial_25-v2}. Our general approach follows that of Kuznetsov~\cite[Theorem~43]{kuznetsov_quiver-hilb_07} in the special case $K=\varnothing$. This explicit construction will play an important role in the proof of Theorem \ref{thm:isom-quot-nqv}, as much of the method will carry over to the Quot scheme case in section \ref{sec:proofsmooth}.

\subsection{Quiver moduli spaces}\label{sec:quiver-mods}

Analogously to the construction of the Nakajima quiver variety $\NQV_\theta(n\delta,\Lambda_0 )$ in section \ref{sec:nqvs}, we can consider quiver moduli spaces for different quivers and algebras. Here, we briefly summarise the construction and mention how the McKay correspondence and the partial McKay correspondence can be rephrased in terms of quiver moduli spaces. 

We consider algebras $A$ obtained as quotients of the path algebra of a quiver $\mathcal{Q}$ by an ideal of relations. Associated to this, and any dimension vector $u \in \N^{\mathcal{Q}_0}$, we have a space of stability conditions 
\[ \Theta_u = \{ \theta \in \QQ^{|\mathcal{Q}_0|} \mid \theta \cdot u =0\}\]
where $\mathcal{Q}_0$ is the set of vertices of the quiver. Let $\Rep(A,u)$ be the affine scheme of $A$-module structures on the $\mathcal{Q}_0$-graded vector space $\bigoplus_{i \in \mathcal{Q}_0} \C^{u_i}$. We can then define 
\[\mathcal{M}_\theta(A,u) \coloneqq \Rep(A,u) \git_\theta G(u)\]
as the GIT quotient of $\Rep(A,u)$ by the action of the group $G(u)= \left(\prod_{i \in \mathcal{Q}_0} \GL(u_i)\right)\!/\CC^*$ with character $\chi_\theta(g)=\prod_{i \in \mathcal{Q}_0} \det (g_i)^{\theta_i}$. This is the coarse moduli space for $\Seshadri$-equivalence classes of $\theta$-semistable $A$-modules of dimension $u$. By King \cite{King94}, this moduli space is a fine moduli space when $u$ is indivisible and $\theta$ is generic, meaning the notions of $\theta$-stability and $\theta$-semistability coincide. In section \ref{sec:nqvs}, we were concerned with the situation when $A= \overline{\Pi}$, the framed preprojective algebra, and then the quiver moduli space $\mathcal{M}_\theta(\overline{\Pi}, (1,n\delta))$ is the Nakajima quiver variety $\NQV_\theta(n\delta,\Lambda_0 )$.

Another quiver moduli space of interest is that for the unframed preprojective algebra $\Pi$ with dimension vector $\delta$. We fix a stability condition $\zeta \in \Theta_\delta$ by \begin{equation} \label{eqn:zeta}
       \zeta(\rho_i) = \left\{\begin{array}{cr} -\sum_{i \neq 0} \dim \rho_i & \text{ for }i = 0, \\ 1 & \text{otherwise.}\end{array}\right.
  \end{equation} 
Work of Kronheimer \cite{kronheimer_ale-hk-quotients_89} shows that $\mathcal{M}_\zeta(\Pi,\delta) \cong S$, and since $\zeta$ is generic and $\delta$ indivisible, this is a fine moduli space. The tautological bundle on this fine moduli space is the tilting bundle $\mathcal{V}$, providing an alternative viewpoint to the McKay correspondence of Theorem \ref{thm:mckay}.

We can also rewrite the partial McKay correspondence (Theorem \ref{thm:partial}) by writing $S_K$ as a fine quiver moduli space with tautological bundle $\mathcal{T}_J = h_* \mathcal{V}_J$. The algebra $\Pi_J$ can be obtained as the quotient of the path algebra of a finite quiver $Q_J$ by the kernel of relations $\ker \alpha_J$, as in the proof of Proposition 3.3 of \cite{cggs_punctual-hilb_21}. The nodes of $Q_J$ correspond to the irreducible representations $\rho_j$ for $j\in J$, so we fix a dimension vector $\delta\vert_J = \sum_{j \in J} \dim (\rho_j)\rho_j$, and group $G(\delta\vert_J)= \prod_{j \in J} \GL(\delta_j) /\CC^*$. We fix $\chi_J \in G(\delta\vert_J)^\vee$ by
\begin{equation} \label{eqn:chiJ}
       \chi_J(\rho_j) = \left\{\begin{array}{cr} -\sum_{j\in J \setminus 0} \delta_j & \text{ for }j = 0, \\ 1 & \text{otherwise.}\end{array}\right.
  \end{equation}
  Every $\chi_J$-semistable $\Pi_J$-module of dimension vector $\delta\vert_J$ is $\chi_J$-stable because the only nonzero vector
  $w\in \mathbb{N}^J$ satisfying $w\leq \delta\vert_J$ and $\chi_J(w)=0$ is $w=\delta\vert_J$. Since $\delta\vert_J$ is indivisible, \cite[Proposition~5.3]{King94} implies that $\mathcal{M}_{\chi_J}(\Pi_J,\delta\vert_J)$ is the fine moduli space of isomorphism classes of $\chi_J$-stable $\Pi_J$-modules of dimension vector $\delta\vert_J$.
Another way of writing Theorem \ref{thm:partial} is the following:

\begin{proposition}
\label{prop:PartialMcKayquiverver}
    The surface $S_K$ is isomorphic to $\mathcal{M}_{\chi_J}(\Pi_J, \delta\vert_J)$, a fine moduli space, with tautological bundle $\mathcal{T}_J = h_* \mathcal{V}_J$, which defines an equivalence of triangulated categories between $\Db(S_K)$ and $\Db(\Pi_J)$.
\end{proposition}

\subsection{Constructing a flat family}
 
Consider the universal subscheme $\mathcal{Z}_n \subseteq \Hilb^n(S_K) \times S_K$ and the morphisms
\[
\begin{tikzcd}
\mathcal{Z}_n \arrow[r, "q"] \arrow[d, swap,"p"] &  S_K \\
\Hilb^n(S_K) &  
\end{tikzcd}
\]
obtained by projecting onto each factor. For each closed point $y\in \Hilb^n(S_K)$, let $Z_y:= p^{-1}(y)$ denote the length $n$ subscheme of $S_K$ parametrised by $y$. 

As shown in Theorem~\ref{thm:partial}, the surface $S_K$ carries a tilting bundle of the form $\mathcal{T}_J:=\bigoplus_{j\in J}\mathcal{T}_j$ where $\mathcal{T}_0\cong \mathcal{O}_{S_K}$, and $\mathcal{T}_j$ is locally-free of rank $\dim\rho_j$ for each $j\in J$. Since $p$ is flat, finite with $p_* \Reg_{\mathcal{Z}_n}$ of rank $n$, the sheaf
\begin{equation}\label{eqn:TJ[n]}
    \mathcal{T}_j^{[n]}:= p_*\big(q^*(\mathcal{T}_j)\big)
\end{equation}
on $\Hilb^n(S_K)$ is locally-free of rank $n \delta_j$, and moreover, the fibre at a closed point $y \in \Hilb^n(S_K)$ is isomorphic to $H^0(Z_y, \Oo_{Z_y} \otimes \mathcal{T}_j)$, using the definition of the universal subscheme. It follows that the fibre of the locally-free sheaf $\mathcal{T}_J^{[n]}:=\bigoplus_{j\in J} \mathcal{T}^{[n]}_j$ at $y$ is  $H^0(Z_y, \Oo_{Z_y} \otimes \mathcal{T}_J)$.

To state the first result in this section, consider the quotient $A:=\overline{\Pi}/(b^*)$ of the preprojective algebra for the framed McKay quiver $Q$, where $b^*$ is the unique arrow in $Q$ with head at vertex $\infty$. For each vertex $i \in Q_0$, write $e_i \in A$ for the idempotent determined by the trivial path in $Q$ at vertex $i$. Define $\Bar{J} = \{\infty\} \cup J$ and $e_{\Bar{J}}:=e_{\infty} + \sum_{j\in J} e_j$, and consider the subalgebra 
\[
    A_{\Bar{J}}:=e_{\Bar{J}}Ae_{\Bar{J}}
\]
of $A$ spanned by the classes of paths with head and tail in $\Bar{J}$.
 
\begin{proposition}\label{prop:flatfamily}
    The locally-free sheaf $\mathcal{F}_n \coloneqq \mathcal{O}_{\Hilb^n(S_K)}\oplus \mathcal{T}_J^{[n]}$ on $\Hilb^n(S_K)$ is a flat family of $A_{\Bar{J}}$-modules of dimension vector $(1, n\delta\vert_J) \in \NN^{\Bar{J}}$. In particular, for each closed point $y\in \Hilb^n(S_K)$, the fibre of $\mathcal{F}_n$ at $y$ is an $A_{\Bar{J}}$-module of dimension vector $(1, n\delta\vert_J)$.
\end{proposition}
\begin{proof}
As explained in section \ref{sec:partial}, $\bigoplus_{j\in J} \mathcal{T}_j$ is a flat family of $\Pi_J$-modules that have dimension vector $\delta\vert_J$, so the maps $\mathcal{T}_{\tail(a)}\to \mathcal{T}_{\head(a)}$ for arrows $a$ in $Q_\Gamma$ satisfy the preprojective relations for the McKay quiver. Applying the functors $q^*$ and $p_*$ shows that the resulting maps $\mathcal{T}^{[n]}_{\tail(a)}\to \mathcal{T}^{[n]}_{\head(a)}$ also satisfy the preprojective relations, so $\mathcal{T}_J^{[n]}$ is a flat family of $\Pi_J$-modules that all have dimension vector $n\delta\vert_J$. Now, \cite[Lemma~3.2]{cggs_punctual-hilb_21} shows the algebra $A_{\Bar{J}}$ is obtained from $\Pi_J$ simply by adding generators given by the classes of the idempotent $e_\infty$ and the arrow $b\colon \infty\to 0$ (which plays no essential role in the relations of $A_{\Bar{J}}$). Therefore, we augment the flat family of $\Pi_J$-modules $\mathcal{T}_J^{[n]}$ by adding the line bundle $\mathcal{O}_{\Hilb^n(S_K)}$, so that each fibre has an additional copy of $\CC$ for the framing vertex, and we add the map of sheaves $p^\#\colon \Oo_{\Hilb^n(S_K)} \rightarrow \mathcal{T}_0^{[n]} \cong p_*(\Oo_{\mathcal{Z}_n})$ associated to the morphism $p$ (as in \cite[III.2]{hartshorne_ag_77}), so each fibre has a map $\CC\to \CC^n$ corresponding to the arrow $b$. It follows that the fibre $\CC \oplus H^0(Z_y,\mathcal{O}_{Z_y}\otimes \mathcal{T}_J)$ of the resulting locally-free sheaf $\mathcal{F}_n$ over $y$ is therefore an $A_{\Bar{J}}$-module of dimension vector $(1, n\delta\vert_J)$.
\end{proof}
For a closed point $y\in \Hilb^n(S_K)$ and for $Z:= \mathcal{Z}_y$, we write $\rho(Z)$ for the $A_{\Bar{J}}$-module of dimension vector $(1, n\delta\vert_J)$ obtained as the fibre of $\mathcal{F}_n$ over $y$. 
\begin{remark}
    The construction of the flat family on $\Hilb^n(S)$ by Kuznetsov~\cite[Theorem~43]{kuznetsov_quiver-hilb_07} is not quite correct as written. There, a $\overline{\Pi}$-module $\rho(Z)$ is defined for each length $n$ subscheme $Z\subset S$, where the $\CC$-vector space at the framing vertex is defined to be $\Gamma(S,\mathcal{O}_{S})$. However, $\Gamma(S,\mathcal{O}_{S})$ is isomorphic to $\CC[x,y]^\Gamma$, rather than $\CC$ as claimed in \emph{loc.~cit.}, so  $\dim_\infty\rho(Z)\neq 1$. We resolve this issue by adding $\mathcal{O}_{\Hilb^n(S_K)}$ to $\mathcal{T}_J^{[n]}$ in $\mathcal{F}_n$, thereby adding a copy of the field to the fibre $H^0(Z_y,\mathcal{O}_{Z_y}\otimes \mathcal{T}_J)$ over $y\in \Hilb^n(S_K)$, together with the map $p^\#\colon \CC \to H^0(Z_y,\mathcal{O}_{Z_y}\otimes \mathcal{T}_J)$, in a canonical way.
\end{remark}

We end this section with a short Lemma which expands on the equivalence $\Phi_J$ from Theorem \ref{thm:partial}.

\begin{lemma}
\label{lem:derivedequiv}
The tilting equivalence from Theorem~\ref{thm:partial} satisfies $\Phi_J (\Oo_{S_K}) = \Pi_Je_0$, and for any subscheme $Z\subset S_K$ of length $n$, we have
 $\Phi_J( \Oo_Z) = H^0(S_K,\mathcal{O}_Z\otimes \mathcal{T}_J)=e_J \rho(Z)$.
\end{lemma}
\begin{proof}
For the first statement, we compute that $\Psi_J(\Pi_Je_0) = \mathcal{T}_J^\vee \otimes \Pi_Je_0 = \mathcal{T}_0^\vee\cong \mathcal{O}_{S_K}$, and the result follows by applying $\Phi_J$.
For the second statement, we have 
\[
\Phi_J(\Oo_Z):= \mathbf{R}\!\Hom(\mathcal{T}_J^\vee, \Oo_Z) \cong \mathbf{R}\!\Hom(\Oo_{S_K}, \Oo_Z \otimes \mathcal{T}_J
) 
\cong H^0(S_K, \mathcal{O}_Z\otimes \mathcal{T}_J) 
\]
which is $e_J \rho(Z)$,  where $\rho(Z)$ is the fibre of  $\mathcal{F}_n$ over the point of $\Hilb^n(S_K)$ determined by $Z$.
\end{proof}

\subsection{A fine moduli space}

Following \cite[Proposition~3.3]{cggs_punctual-hilb_21}, there is a finite quiver $Q^*_{\Bar{J}}$ with vertex set $\Bar{J} = \{\infty\} \sqcup J$ and a $\CC$-algebra epimorphism $\beta_J \colon \CC Q^*_{\Bar{J}}\to A_{\Bar{J}}$. As a result, one can construct moduli spaces of $A_{\Bar{J}}$-modules. For this, let $\Rep(A_{\Bar{J}},(1, n\delta\vert_J))$ denote the affine scheme of representations of $Q^*_{\Bar{J}}$ that have dimension $(1, n\delta\vert_J)$ and satisfy the relations $\ker(\beta_J)$. The group $G(1, n\delta\vert_J):= \GL(1)\times \prod_{j\in J} \GL(n\dim\rho_j)$ acts on $\Rep(A_{\Bar{J}},(1, n\delta\vert_J))$. Let $h_J \coloneqq \sum_{j \in J} \delta_j \in \NN$, and define $\eta \colon \ZZ\oplus \ZZ^J\to \QQ$ satisfying $\eta((1, n\delta\vert_J))=0$ by setting
\begin{equation}
\label{eqn:specificeta}
    \eta(\rho_j) = \left\{\begin{array}{cr}
    1 &  \text{for }j \in J \setminus \{0\} \\
    \frac{1}{2n} - h_J +1 & \text{for }j=0, \end{array}\right.
\end{equation}
 so $\eta(\rho_\infty)=-\frac{1}{2}$. Then $\eta$ is a stability condition  for $A_{\Bar{J}}$-modules of dimension vector $(1, n\delta\vert_J)$.
\begin{lemma}
Every $\eta$-semistable $A_{\Bar{J}}$-module of dimension vector $(1, n\delta\vert_J)$ is $\eta$-stable.
\end{lemma}
\begin{proof}
 It suffices to show that every $v'\in \NN\oplus \NN^J$ satisfying $0<v'<(1, n\delta\vert_J)$ also satisfies $\eta(v') \neq 0$.
 Suppose otherwise, so there exists $v'\in \NN\oplus \NN^J$ satisfying $0<v'<(1, n\delta\vert_J)$ and $\eta(v') =0$. Substituting from \eqref{eqn:specificeta} gives
 \begin{equation}
     \label{eqn:etav'}
 0=\eta(v') = 
    -\frac{v'_\infty}{2} + v'_0\left(\frac{1}{2n} -h_J+1\right) + \sum_{j \in J\backslash\{0\}} v'_j .
    \end{equation}
 Since $0 < v' <(1, n\delta\vert_J)$, we must have $v'_\infty\in \{0,1\}$. We treat these cases separately:
 
 \smallskip
 \textsc{Case 1:} If $v'_\infty=0$, then since $v'_j\in \NN$ for all $j\in J$, the equality \eqref{eqn:etav'} forces $v'_0(\frac{1}{2n}) \in \NN$. We know that $v'_0\leq n$ because $v'< (1, n\delta\vert_J)$, so this can happen only if $v'_0=0$, which in turn implies that $\sum_{j \in J\setminus\{0\}} v'_j=0$. Therefore  $v'=0$, a contradiction. 

 \medskip
 
 \textsc{Case 2:} Otherwise, $v'_\infty=1$. Again, the fact that $v'_j\in \NN$ for all $j\in J$, together with equality \eqref{eqn:etav'} forces $-\frac{1}{2} +v'_0(\frac{1}{2n}) \in \NN$. Since  $v'_0\leq n$, this can only happen if $v'_0=n$. Substituting back into equation \eqref{eqn:etav'} shows that $\sum_{j \in J\setminus\{0\}} (n\delta_j-v'_j)=0$. This is a contradiction as $v' < (1, n\delta\vert_J)$.
 
 \smallskip
 
 \noindent This completes the proof.
  \end{proof}

It follows from King~\cite[Proposition~5.3]{King94} that the GIT quotient
 \[
 \mathcal{M}_{\eta}(A_{\Bar{J}},(1, n\delta\vert_J)) \coloneqq \Rep(A_{\Bar{J}}, (1, n\delta\vert_J)) \git_{\eta} G(1, n\delta\vert_J)
 \]
 is the fine moduli space of isomorphism classes of $\eta$-stable $A_{\Bar{J}}$-modules of dimension vector $(1, n\delta\vert_J)$. We choose the normalisation of the tautological locally-free sheaf on $\mathcal{M}_{\eta}(A_{\Bar{J}},(1, n\delta\vert_J))$ so that the rank-one summand indexed by the framing vertex $\infty$ is the trivial bundle $\mathcal{O}_{\mathcal{M}_{\eta}(A_{\Bar{J}},(1, n\delta\vert_J))}$.

  \begin{proposition}
  \label{prop:Kuznetsov}
    Consider the stability condition $\eta$ from \eqref{eqn:specificeta}. The $A_{\Bar{J}}$-modules of dimension vector $(1, n\delta\vert_J)$ in the flat family $\mathcal{F}_n$ on $\Hilb^n(S_K)$ are $\eta$-stable.
    In particular, the universal property of $\mathcal{M}_{\eta}(A_{\Bar{J}},(1, n\delta\vert_J))$ determines a universal morphism $\nu'\colon \Hilb^n(S_K)\to \mathcal{M}_{\eta}(A_{\Bar{J}},(1, n\delta\vert_J))$.
  \end{proposition}

 Our proof of this proposition follows closely the proof of the analogous statement for the minimal resolution $S$ by Kuznetsov~\cite[Proposition~44]{kuznetsov_quiver-hilb_07}. The key step is to study the possible dimension vectors of submodules of $\rho(Z)$. 

\begin{lemma}\label{lem:dimofsubrep}
 Let $\rho' \subsetneq \rho(Z)$ be a proper nonzero $A_{\Bar{J}}$-submodule. Then either:
    \begin{enumerate}
        \item[\one] 
        $\dim \rho' = (0,w)\in \NN\oplus \NN^J$, where $w$ satisfies $w_0\delta\vert_J \leq w \leq n\delta\vert_J$; or
        \item[\two] $\dim \rho' = (1,w)\in \NN\oplus \NN^J$, where $w$ satisfies 
        $w_0\delta\vert_J < w < n\delta\vert_J$.
    \end{enumerate}
\end{lemma}

\begin{proof}
    Since $\rho' \subset \rho(Z)$ is a proper $A_{\Bar{J}}$-submodule, the claims $w \leq n\delta\vert_J$ in case $\one$ and $w < n\delta\vert_J$ in case $\two$ are automatically fulfilled. We proceed by induction on the length of the subscheme.
    
    For the base case, let $Z:=\{x\}\subset S_K$ be a closed point. Then $e_J \rho(Z)\cong H^0(S_K, \Oo_{x} \otimes \mathcal{T}_J)$ is isomorphic to the fibre of the tautological bundle $\mathcal{T}_J$ on the fine moduli space $S_K\cong \mathcal{M}_{\chi_J}(\Pi_J,\delta\vert_J)$ from Proposition~\ref{prop:PartialMcKayquiverver}, so it is stable with respect to the stability parameter $\chi_J$ introduced in \eqref{eqn:chiJ}. If the $A_{\Bar{J}}$-submodule $\rho'$ satisfies $\dim \rho' = (0,w)$, then $\rho'=e_J \rho'\subseteq e_J \rho(Z)$ is an $\Pi_J$-submodule. Either $w_0=0$, in which case $w \geq w_0\delta\vert_J$, or $w_0=1$, in which case $\rho'=e_J\rho(Z)$ by $\chi_J$-stability, giving $w = w_0\delta\vert_J$. 

   Therefore, the result holds in case $\one$. Alternatively, $\dim \rho' = (1,w)$, but the map $p^\#$ from the proof of Proposition~\ref{prop:flatfamily} is injective by \cite[Proposition~5.4.3]{GrothendieckEGA1ver2:71}, so in particular, in this case  
$w_0 = 1$. Therefore, $e_J\rho'=e_J\rho(Z)$ by $\chi_J$-stability, but this would mean $\dim \rho' = (1, \delta\vert_J)=\dim \rho(Z)$, a contradiction, so case \two\ does not occur when $n=1$.

Assume the result for the $A_{\Bar{J}}$-module of dimension vector $(1, (n-1)\delta\vert_J)$ obtained as the fibre of $\mathcal{F}_{n-1}$ over $\Hilb^{n-1}(S_K)$, and let $Z_n\subset S_K$ be a subscheme of length $n$. Choosing a subscheme $Z_{n-1}\subset Z_n$ determines a closed point $x\in S_K$ and a short exact sequence of coherent sheaves
 \begin{equation}
 \begin{tikzcd}
 \label{eqn:OxOZ}
0 \arrow[r]& \Oo_{x} \arrow[r]& \Oo_{Z_{n}} \arrow[r] & \Oo_{Z_{n-1}} \arrow[r] & 0
\end{tikzcd}
\end{equation}
on $S_K$. Apply the exact equivalence $\Phi_J$ as in Lemma~\ref{lem:derivedequiv} to obtain a short exact sequence
\begin{equation}
    \label{eqn:alpha}
 \begin{tikzcd}
0 \arrow[r]& H^0(S_K,\Oo_{x}\otimes \mathcal{T}_J) \arrow[r]& H^0(S_K,\Oo_{Z_{n}}\otimes \mathcal{T}_J)  \arrow[r,"\phi"] & H^0(S_K,\Oo_{Z_{n-1}}\otimes \mathcal{T}_J)  \arrow[r] & 0
\end{tikzcd}
\end{equation}
of $\Pi_J$-modules. Note that $\ker(\phi)$ is isomorphic to the fibre of $\mathcal{T}_J$ at $x$, so it's $\chi_J$-stable. Add the identity map $\text{id}\colon \CC\to \CC$ to $\phi$ for vertex $\infty$, and include maps $p^\#\colon \CC\to H^0(S_K,\mathcal{O}_{Z_n}\otimes \mathcal{T}_0)$ and $\phi_0\circ p^\#\colon \CC\to H^0(S_K,\mathcal{O}_{Z_{n-1}}\otimes \mathcal{T}_0)$ for the arrow with tail at $\infty$, where $\phi_0$ is the graded piece of the map $\phi$ corresponding to vertex $0\in J$. The result is a short exact sequence
 \begin{equation}
     \label{eqn:f}
 \begin{tikzcd}
0 \arrow[r]& H^0(S_K,\Oo_{x}\otimes \mathcal{T}_J) \arrow[r]& \rho(Z_n) \arrow[r,"\psi"] & \rho(Z_{n-1}) \arrow[r] & 0
\end{tikzcd}
\end{equation}
of $A_{\Bar{J}}$-modules; here, we regard $ H^0(S_K,\Oo_{x}\otimes \mathcal{T}_J)$ as an $A_{\Bar{J}}$-module of dimension vector $(0,\delta\vert_J)$.  

 For case \one, we assume that $\dim\rho' = (0,w)$. Then $\rho'=e_J \rho'\subseteq H^0(S_K,\mathcal{O}_{Z_n}\otimes \mathcal{T}_J)$ is an $\Pi_J$-submodule. Let $\phi'$ denote the restriction of $\phi$ to  $\rho'$, write $u:=\dim \ker(\phi')\in \NN^J$ and $w':=\dim \operatorname{im}(\phi')\in \NN^J$. The base case gives $u\geq u_0\delta\vert_J$ and the inductive hypothesis gives $w'\geq w'_0\delta\vert_J$, so $w=u+w'\geq u_0\delta\vert_J + w'_0\delta\vert_J = w_0\delta\vert_J$ by additivity in \eqref{eqn:alpha}. Therefore the result holds in case \one. 

For case \two, assume that $\dim \rho'=(1,w)$. Let $\psi'$ denote the restriction of $\psi$ to  $\rho'$, let $u:=\dim \ker(\psi')\in \NN^J$ and let $\dim \im(\psi')=(1,w')$, where $w' \in \NN^J$, giving $w=u+w'$. 
 If $u>u_0\delta\vert_J$ or $w' > w'_0\delta\vert_J$ then we are done by additivity, so suppose otherwise. To obtain a contradiction, we claim first that the sequences \eqref{eqn:OxOZ}--\eqref{eqn:f} all split in this case. Indeed, the induction assumption allows us to assume $w'=(n-1)\delta\vert_J$, so $\psi'(\rho') = \rho(Z_{n-1})$. We have $u = u_0\delta\vert_J$ and $u_0\leq 1$. If $u_0=1$, we have $w=n\delta\vert_J$, but this is absurd since $\rho'$ is a proper submodule, so $u_0=0$ and hence $u=0$. It follows that $\rho' \cong\rho(Z_{n-1})$, and in particular, $\rho(Z_{n-1})$ is an $A_{\Bar{J}}$-submodule of $\rho(Z_n)$. That is, sequence \eqref{eqn:f} splits. Apply $e_J$ on the left in \eqref{eqn:f} to see that the sequence of $\Pi_J$-modules from \eqref{eqn:alpha} splits. The additional data in \eqref{eqn:f} comprising the maps from the framing vertex in $\rho(Z_n)$ and $\rho(Z_{n-1})$ is equivalent to the $\Pi_J$-module homomorphisms from $\Pi_Je_0$ given by the image of the canonical projections $\Oo_S \to \Oo_{Z_{n}}$ and $\Oo_S \to \Oo_{Z_{n-1}}$ under the derived equivalence $\Phi_J$. We record this in the following commutative diagram of $\Pi_J$-modules
\[
\begin{tikzcd}
 & & \Pi_Je_0 \arrow[r, leftrightarrow] \arrow[d]& \Pi_Je_0 \arrow[d]\\
0 \arrow[r]& H^0(S_K,\Oo_{x}\otimes \mathcal{T}_J) \arrow[r]& H^0(S_K,\Oo_{Z_{n}}\otimes \mathcal{T}_J)  \arrow[r,"\phi"] & H^0(S_K,\Oo_{Z_{n-1}}\otimes \mathcal{T}_J)  \arrow[r] & 0,
\end{tikzcd}
\]
where the bottom row splits. The fact that $\rho' \cong \rho(Z_{n-1})$ is a subrepresentation of $\rho(Z_n)$ means that the image of the middle vertical homomorphism must stay within $e_J\rho' \cong H^0(S_K,\Oo_{Z_{n-1}}\otimes \mathcal{T}_J)$.

Apply the exact equivalence $\Psi_J$ from Theorem~\ref{thm:partial}\ to the diagram and, by Lemma~\ref{lem:derivedequiv}, we obtain a commutative diagram
\begin{equation}
\begin{tikzcd}
\label{eqn:snake}
 &  0\arrow[d]\arrow[r]  & \Oo_{S_K} \arrow[r, leftrightarrow] \arrow[d] & \Oo_{S_K} \arrow[d]\arrow[r] & 0\\
  0 \arrow[r]&   \Oo_{x} \arrow[r] & \Oo_{Z_n} \arrow[r] & \Oo_{Z_{n-1}}\arrow[r] &  0
  \end{tikzcd}
\end{equation}
of sheaves with exact rows, where the bottom row splits as claimed. However, then the middle vertical homomorphism must factor through the summand $\Reg_{Z_{n-1}}$, which contradicts the assumption that it is surjective. \qedhere

\end{proof}

\begin{proof}[Proof of Proposition~\ref{prop:Kuznetsov}]
 For the stability condition $\eta$ from \eqref{eqn:specificeta} and for any subscheme $Z\subset S_K$ of length $n$, we must show that $\eta(\rho^\prime)>0$ for every nonzero proper $A_{\Bar{J}}$-submodule $\rho'\subset\rho(Z)$. Lemma~\ref{lem:dimofsubrep} shows there are two cases:

\medskip

\noindent \textsc{Case 1:} $\dim \rho^\prime =(0,w)$ for some nonzero $w=(w_j)\in \NN^J$ satisfying $w_0\delta\vert_J \leq w\leq n\delta\vert_J$.

\smallskip
We substitute the values of $\eta(\rho_j)$ for $j\in J$ from \eqref{eqn:specificeta} to see that
\[
\eta(\rho')= \left(\frac{1}{2n} - h_J +1\right)w_0 + \sum_{j \in J\setminus\{0\}} w_j = 
\frac{w_0}{2n} + \sum_{j \in J\setminus\{0\}} \big(w_j - w_0 \delta_j\big).
\] 
Since $w \geq w_0\delta\vert_J$, we have $w_j \geq w_0\delta_j$ for all $j \in J\setminus \{0\}$. Therefore, $\eta(\rho') \geq 0$, with equality if and only if $w_0=0$ and $w_j = w_0\delta_j =0$ for all $j \in J\setminus\{0\}$. But $w\neq 0$ as $\rho'\neq 0$, so $\eta(\rho')>0$.

\medskip

\noindent \textsc{Case 2:} $\dim \rho^\prime =(1,w)$ for some $w=(w_j)\in \NN^J$ satisfying $w> w_0\delta\vert_J$. 

\smallskip
Again, substitute in the values of $\eta(\rho_i)$ for $i\in \{\infty\}\cup J$ from \eqref{eqn:specificeta} to get
\[
    \eta(\rho') = -\frac{1}{2} + \left(\frac{1}{2n} - h_J +1\right) w_0 + \sum_{j \in J\backslash\{0\}} w_j = -\frac{1}{2} + \frac{w_0}{2n} + \sum_{j \in J\setminus\{0\}} (w_j-w_0\delta_j\big).
\]
Since $w>w_0\delta\vert_J$, we have $w_j \geq w_0\delta_j$ for all $j \in J \setminus\{0\}$, and there exists $l$ with $w_l > w_0\dim(\rho_l)$. Therefore, $\sum_{j \in J\setminus\{0\}} (w_j-w_0\delta_j)$ is a (strictly) positive integer, so $\eta(\rho')>0$ as required.
\end{proof}

\subsection{Geometric construction of the isomorphism}

We now show the universal morphism $\nu'$ does define an isomorphism from $\Hilb^n(S_K)$ to $\NQV_{\theta'_K}(n\delta,\Lambda_0 )$. First we explain the use of $\mathcal{M}_\eta(A_{\Bar{J}},(1, n\delta\vert_J))$. 

\begin{lemma}
\label{lem:CYisomorphism}
For $\eta$ from \eqref{eqn:specificeta} and $\theta'_K\in \relint(\sigma_K)$, there is a commutative diagram
  \begin{equation}
      \label{eqn:CYmorphism}
\begin{tikzcd}
\NQV_{\theta'_K}(n\delta,\Lambda_0 ) \arrow[r,hook, "\varphi"]\arrow[d, "\lambda'"] & \mathcal{M}_\eta(A_{\Bar{J}},(1, n\delta\vert_J))\arrow[d, "\lambda"] \\
\NQV_0(n\delta, \Lambda_0 )  \arrow[r, hook, "\varphi_0"] & \mathcal{M}_0(A_{\Bar{J}},(1, n\delta\vert_J)),
\end{tikzcd}
\end{equation}
 where the left hand map is a crepant partial resolution and both horizontal maps are closed immersions, where both $\varphi$ and $\varphi_0$ send each semistable $\overline{\Pi}$-module $B$ to $e_{\overline{J}}B$.
\end{lemma}
\begin{proof}
Since quiver varieties in the same GIT cone are isomorphic, we consider $\overline{\eta}$ defined by setting
\begin{equation}
\label{eqn:overlineeta}
    \overline{\eta}(\rho_i) = \left\{\begin{array}{cr}
    1 &  \text{for }i \in J \setminus \{0\} \\
    \frac{1}{2n} - h_J +1 & \text{for }i=0 \\
    0 & \text{for } i \in K \end{array}\right.,
\end{equation}
where, as before, $h_J \coloneqq \sum_{j \in J} \delta_j
\in \NN$. It follows that  $\overline{\eta}(\rho_\infty)=-\frac{1}{2}$. Since $\overline{\eta}$ lies in the relative interior of the GIT cone $\sigma_K$, there is an isomorphism $\NQV_{\theta'_K}(n\delta,\Lambda_0 )\cong \mathfrak{M}_{\overline{\eta}}(n\delta, \Lambda_0 )$ of schemes over $\NQV_{0}(n\delta,\Lambda_0 )$. Craw-Yamagishi \cite{craw_yamagishi_hilb-can_26} construct the commutative diagram \eqref{eqn:CYmorphism} in which the vertical maps are projective morphisms obtained by VGIT and the horizontal maps are closed immersions. The definition of $\varphi$ ensures that $\varphi(B)=e_{\overline{J}}B$.
\end{proof}
We now see that the universal morphism $\nu$ fits into a commutative diagram with $\Sym^n(\Sing)$.
\begin{lemma}\label{lem:comm-diagram-cornered}
    There is a commutative diagram
    \begin{equation}
    \label{eqn:kappalambda}
        \begin{tikzcd}
    \Hilb^n(S_K) \arrow[r, "\nu'"] \arrow[d, swap, "\kappa"]& \mathcal{M}_\eta(A_{\Bar{J}},(1, n\delta\vert_J)) \arrow[d, "\lambda"]\\
    \Sym^n(\Sing) \arrow[r, "\nu_0'"] & \mathcal{M}_0(A_{\Bar{J}},(1, n\delta\vert_J))
\end{tikzcd}
\end{equation}
where both vertical morphisms are projective, the lower horizontal map is a closed immersion and the upper horizontal map is injective on closed points.
    
\end{lemma}
\begin{proof}
    The morphism $\kappa$ is the composition of the Hilbert Chow morphism $\Hilb^n(S_K)\rightarrow \Sym^n(S_K)$ and the functorial morphism $\Sym^n(S_K) \rightarrow \Sym^n(\Sing)$. Define $\nu_0':=\varphi_0\circ \nu_0$, where $\varphi_0$ is the closed immersion from \eqref{eqn:CYmorphism} and the isomorphism 
 \[
 \nu_0\colon \Sym^n(\Sing)\longrightarrow \Spec(\CC)\times \Sym^n\big(\mathcal{M}_0(\Pi, \delta)\big)\cong \NQV_{0}(n\delta,\Lambda_0 )
 \]
 is from \cite[Lemma~4.5]{bellamy_craw_birational-symplectic_20}. We already know that $\kappa$ is a projective crepant partial resolution, and $\lambda$ is obtained by VGIT, so it's projective. Next we show that the diagram commutes.
 
 Let $y \in \Hilb^n(S_K)$. Write the support of the associated subscheme $Z_y$ as $x_1, \dots, x_n$ in $S_K$ (not necessarily distinct points). By definition, $\kappa(y)=\sum_{1\leq i\leq n} g(x_i)$, where $g: S_K \rightarrow \Sing$. For each $x_i \in S_K$, we choose some $s_i \in h^{-1}(x_i) \subset f^{-1}(g(x_i))\subseteq S$. The corresponding point in $\mathcal{M}_\zeta(\Pi,\delta)$ is defined by $H^0(S, \mathcal{O}_{s_i}\otimes \mathcal{V})$, the fibre of $\mathcal{V}$ at $s_i$. Applying Remark 2.4 of \cite{craw_wye_hilb-crepant-partial_25-v2} shows that $\nu_0(\kappa(y))$ is the $\Seshadri$-equivalence class of the $\Pi$-module associated to the points $s_i$, that is: 
\[
 \nu_0(\kappa(y)) = \CC\oplus \bigoplus_{1\leq i\leq n} H^0(S,\mathcal{O}_{s_i}\otimes \mathcal{V}).
 \]
 Since $\varphi_0(B)=e_JB$ for any $0$-semistable $\Pi$-module $B$, we compute
\begin{equation}
\label{eqn:nu0'kappa}
  \nu_0'(\kappa(y)) := \varphi_0\big(\nu_0(\kappa(y)\big) = \CC\oplus \bigoplus_{1\leq i\leq n} H^0(S,\mathcal{O}_{s_i}\otimes \mathcal{V}_J).
\end{equation}

On the other hand, by definition $\nu'(y)=\CC\oplus H^0(S_K,\mathcal{O}_{Z_y}\otimes \mathcal{T}_J)$. The morphism $\lambda$ is obtained by VGIT, so $\lambda$ sends $\nu'(y)$ to the corresponding $0$-polystable $A_{\Bar{J}}$-module. The short exact sequence \eqref{eqn:alpha} appearing in our proof by induction shows that the objects appearing in the filtration of $\nu'(y)$ are $H^0(S_K,\mathcal{O}_{x_i}\otimes \mathcal{T}_J)$ for each $x_i$ in the support of $Z_y$ (and one copy of $\CC$), so the $S$-equivalence class of the $0$-semistable $A_{\Bar{J}}$-module $\nu'(y)$ is
\begin{equation}
\label{eqn:lambdanu}
    \lambda(\nu(y)) = \CC\oplus \bigoplus_{1\leq i\leq n} H^0(S_K,\mathcal{O}_{x_i}\otimes \mathcal{T}_J).
\end{equation}
To see that \eqref{eqn:nu0'kappa} and \eqref{eqn:lambdanu} agree, note first that $H^0(S, \Oo_{s_i} \otimes \mathcal{V}_j) \cong H^0(S_K, h_*(\Oo_{s_i} \otimes \mathcal{V}_j))$. However, $\mathcal{V}_j \cong h^*(\mathcal{T}_j)$ by Lemma \ref{lem:push-taut-prop}(\ref{lem:push-taut-prop:3}), so
\[
    H^0(S, \Oo_{s_i} \otimes \mathcal{V}_j) \cong H^0(S_K, h_*(\Oo_{s_i} \otimes h^*(\mathcal{T}_j)))\cong H^0(S_K, h_*(\Oo_{s_i}) \otimes \mathcal{T}_j) \cong H^0(S_K,\Oo_{x_i}\otimes \mathcal{T}_j)
\]
by the projection formula. Take the direct sum over $1\leq i\leq n$ and $j\in J$ to see that \eqref{eqn:nu0'kappa} and \eqref{eqn:lambdanu} agree, so diagram \eqref{eqn:kappalambda} commutes. 

We now show that $\nu'$ is injective on closed points. The $A_{\Bar{J}}$-module $\nu'(y)$ is equivalent to the data of the homomorphism of $\Pi_J$-modules \[[\Pi_J e_0 \to H^0(\Reg_{Z_y} \otimes \mathcal{T}_J)] = \Hom(\mathcal{T}_J^{\vee}, [\Reg_{S_K} \twoheadrightarrow \Reg_{Z_y}]) \; .\] By Lemma \ref{lem:derivedequiv}, this this is the same as $\Phi_J([\Reg_{S_K} \twoheadrightarrow \Reg_{Z_y}])$. But $\Phi_J$ is an equivalence by Theorem \ref{thm:partial}(\ref{thm:partial:1}), so $\nu'(y)$ determines the subscheme $Z_y$ uniquely. \qedhere
\end{proof}

We now identify $\NQV_{\theta'_K}(n\delta, \Lambda_0 )$ and $\NQV_{0}(n\delta, \Lambda_0 )$ with closed subschemes of $\mathcal{M}_\eta(A_{\Bar{J}},(1, n\delta\vert_J))$ and $\mathcal{M}_0(A_{\Bar{J}},(1, n\delta\vert_J))$ respectively determined by the closed immersions from Lemma~\ref{lem:CYisomorphism}. By definition, the morphism $\nu_0'$ from Lemma~\ref{eqn:kappalambda} factors via the closed subscheme $\NQV_{0}(n\delta, \Lambda_0 )$ of $\mathcal{M}_0(A_{\Bar{J}},(1, n\delta\vert_J))$.

\begin{theorem}
\label{thm:isomophHilbtoNQV}
    Restricting the codomains of $\nu'$ and $\nu_0'$ to their images gives a commutative diagram 
    \begin{equation}
    \label{eqn:Hilbtomathfrak}
        \begin{tikzcd}
            \Hilb^n(S_K) \arrow[r, "\nu_J"] \arrow[d, "\kappa"]&\NQV_{\theta'_K}(n\delta, \Lambda_0 )\arrow[d, "\lambda'"]\\
            \Sym^n(\Sing) \arrow[r, "\nu_0"]&\NQV_{0}(n\delta, \Lambda_0 )
        \end{tikzcd}
    \end{equation}
    where the horizontal maps are isomorphisms and $\lambda'$ is the restriction of $\lambda$ to $\NQV_{\theta'_K}(n\delta, \Lambda_0 )$.
\end{theorem}
\begin{proof}
   Since $\nu_0'=\varphi_0\circ \nu_0$ is the composition of an isomorphism and a closed immersion, it follows that restricting the codomain of $\nu_0'$ to the image is the isomorphism $\nu_0$. Since the diagram \eqref{eqn:kappalambda} commutes, the image of $\nu'$ is contained in the closed subscheme $\lambda^{-1}(\NQV_{0}(n\delta, \Lambda_0 ))$ of $\mathcal{M}_\eta(A_{\Bar{J}},(1, n\delta\vert_J))$. We know $\nu'$ is projective \cite[II.\ Ex~4.8,4.9]{hartshorne_ag_77}, because $\nu_0 \circ \kappa$ is projective, and $\lambda'$ is projective and hence separated. Therefore, the image of $\nu'$ is closed. We next notice that the morphisms in diagrams \eqref{eqn:kappalambda} and \eqref{eqn:CYmorphism} are isomorphisms when restricted to the open dense locus in $\Sym^n(\Sing)$ consisting of cycles supported at n distinct non-singular points. Since $\Hilb^n(S_K)$ is irreducible and proper over the base $\Sym^n(\Sing)$, the image of $\Hilb^n(S_K)$ under $\nu'$ is precisely the closure of the image of this open dense locus.
   But so is the closed, irreducible subset $\NQV_{\theta'_K}(n\delta, \Lambda_0 ) \subseteq \mathcal{M}_\eta(A_{\Bar{J}},(1, n\delta\vert_J))$, so we must have that the image of $\Hilb^n(S_K)$ under $\nu'$ is precisely $\NQV_{\theta'_K}(n\delta, \Lambda_0 )$.
   Since $\Hilb^n(S_K)$ is reduced, this implies that $\nu'$ factors through $\nu_J$ as required.
   
   It remains to show that $\nu_J$ is an isomorphism. Note that $\nu_J$ is injective on closed points because so is $\nu'$. Since $\nu_J$ is also projective, birational, and the target is normal, then it's an isomorphism by Zariski's Main Theorem. \qedhere
\end{proof}

\section{The isomorphism between the smooth moduli spaces}\label{sec:proofsmooth}

Recall that $\Quot_J^{n \delta}$ is a fine moduli scheme for quotients of sheaves \[\phi \colon h_*\mathcal{V} \twoheadrightarrow F\] on $S_K$. For any such quotient $\phi$, consider the homomorphism of $\Pi$-modules \[H^0(\phi) \colon \Pi e_0 = H^0(\mathcal{V}) \to H^0(F)\] Note that the homomorphism $H^0(\phi)$ is determined by $H^0(\phi)(e_0) \in H^0(F)$. From this data, we define a $\Bar{\Pi}$-module structure on \[\C \oplus H^0(F)\] as follows. We let the left-hand summand $\C$ to be the component at $\infty$, let the arrow $\infty \to 0$ send $1 \in \C$ to $H^0(\phi)(e_0)$, and let $0 \to \infty$ act by $0$. The preprojective relations defining $\Bar{\Pi}$ reduce to those defining $\Pi$ when the arrow $0 \to \infty$ is $0$, and since $H^0(F)$ is a $\Pi$-module, the structure on $\C \oplus H^0(F)$ indeed satisfies the preprojective relations. We will slightly abuse notation and denote this $\Bar{\Pi}$-module also by $H^0(\phi)$.

\begin{lemma}\label{lem:zero-generates}
    Let $\phi \colon h_*\mathcal{V} \twoheadrightarrow F$ be a surjection of $h_* \Ealg$-modules with $F$ of finite length. Then there is no proper $\Pi$-submodule of $H^0(F)$ containing all of $e_0 H^0(F)$. 
\end{lemma}
\begin{proof}
    The module $H^0(F)$ splits into a direct sum indexed by the points supporting $F$, so it suffices to prove the statement assuming that $F$ is supported at a closed point $x \in S_K$. Let $i \in I$ and let $s \in e_i H^0(F)$. Since $\phi$ is surjective, there exists a section $u \in e_i h_* \mathcal{V}(U)$ with $\phi(u) = s|_U$ over some open subset $U \subseteq S_K$ containing $x$.

    The sheaf $e_i h_* \mathcal{V}$ is globally generated by Lemma \ref{lem:push-taut-prop}(\ref{lem:push-taut-prop:1}), so the natural homomorphism $H^0(e_i h_* \mathcal{V}) \otimes \Reg_{S_K} \to e_i h_* \mathcal{V}$ is surjective. This means that, possibly after further shrinking the neighbourhood $U$ of $x$, there exist global sections $u_1, \ldots, u_l \in H^0(e_i h_* \mathcal{V})$ and local sections $t_1, \ldots, t_l \in \Reg_{S_K}(U)$ such that \begin{equation}\label{eq:zero-generates} u = \sum_{j = 1}^l u_j t_j \; .\end{equation} Under the identification $e_i h_* \mathcal{V} = e_i h_* \Ealg e_0$ and $\Reg_{S_K} \cong e_0 h_* \mathcal{V}$, we may view the $u_j$ as global sections of $e_i h_* \Ealg e_0$ and the $t_j$ as local sections of $e_0 h_* \mathcal{V}$, and the multiplication in (\ref{eq:zero-generates}) as the action of $h_* \Ealg$ on $h_* \mathcal{V}$. This gives \[s|_U = \phi(u) = \phi\left(\sum_{j = 1}^l u_j t_j\right) = \sum_{j = 1}^l u_j \phi(t_j) \; ,\] where now $\phi(t_j) \in e_0 F(U) = e_0 H^0(F)$ and $u_j \in e_i \Pi e_0$ for all $j$. This shows that $e_0 H^0(F)$ generates $H^0(F)$ as a $\Pi$-module. 
\end{proof}

Recall that the morphism $\Quot_J^{n \delta} \to \Hilb^n(S_K)$ (\ref{eq:quot-to-hilb}) is given by sending $\phi$ to $e_0 \phi \colon \Reg_{S_K} \to e_0 F$. Cornering is exact, so $e_0 \phi$ is surjective, and $e_0 F = \Reg_Z$ for some length-$n$ subscheme $Z$ of $S_K$.

On the other hand, we can corner at the subset $J$ and obtain a homomorphism $e_J \phi \colon \mathcal{T}_J = h_* \mathcal{V}_J \to e_J F$ of sheaves of $h_* \Ealg_J$-modules. Since $\mathcal{T}_J$ is locally-free, further cornering at $e_0$ defines a Morita equivalence between $h_* \Ealg_J = \SEnd(\mathcal{T}_J)$ and $\Reg_{S_K}$, so we can reconstruct $e_J \phi$ from $e_0 \phi$. In fact, in the same way as above, the data of $H^0(e_J \phi) \colon e_J \Pi e_0 \to H^0(e_J F)$ is encoded in the $\Bar{\Pi}_{\Bar{J}}$-module $e_{\Bar{J}} H^0(\phi) = \C \oplus H^0(e_J F)$, and this module is the same as $\rho(Z)$ from section \ref{sec:geometric}. Here, $\Bar{J} = \{\infty\} \cup J$, $e_{\Bar{J}}$ is the associated idempotent, and $\Bar{\Pi}_{\Bar{J}} = e_{\Bar{J}} \Bar{\Pi} e_{\Bar{J}}$.

\begin{proposition}\label{prop:mor-to-nqv}
    \ \begin{enumerate}
        \item\label{prop:mor-to-nqv:1} For every surjection $\phi \colon h_* \mathcal{V} \twoheadrightarrow F$ over $h_* \Ealg$, where $F$ is of finite length and dimension vector $n \delta$, the $\Bar{\Pi}$-module $H^0(\phi)$ is $\theta_K$-stable for $\theta_K \in C_K$.

        \item\label{prop:mor-to-nqv:2} There is a morphism \[\Quot_J^{n \delta} \longrightarrow \NQV_{\theta_K}(n \delta,\Lambda_0 )\] which gives the mapping $\phi \mapsto H^0( \phi)$ on closed points.
        
        \item\label{prop:mor-to-nqv:4} The morphism from (\ref{prop:mor-to-nqv:2}) is injective on closed points.
    \end{enumerate}
\end{proposition}
\begin{proof}
    \begin{enumerate}
        \item As explained above, the surjection $\phi$ induces a surjection \[e_J \phi \colon e_J (h_* \mathcal{V}) \twoheadrightarrow e_J F \] over  $h_*\Ealg_J$ which is equivalent to defining a length-$n$ subscheme $Z$ of $S_K$. The corresponding $\Bar{\Pi}_{\Bar{J}}$- algebra $\rho(Z)$ is the same as $e_{\Bar{J}} H^0(\phi)$.

        To show that $\theta_K$-stability holds for any $\theta_K \in C_K$, it suffices to show it for one specific choice of such $\theta_K$. We define $\theta_K \in C_K$ by setting \[\theta_K(\alpha_j) = nh \quad \text{for} \quad j \in J \setminus \{0\} \; , \quad \theta_K(\alpha_k) = 1 \quad \text{for} \quad k \in K \; , \quad \text{and} \quad \theta_K(\delta) = h \;, \] where $h = \sum_{i \in I} \delta_i$. (Note that this choice of $\theta_K$ differs from that defined in \cite[Proof of Lemma 3.10]{craw_wye_hilb-crepant-partial_25-v2}).
        
        Now, let $\Bar{M} \subset H^0(\phi)$ be a non-zero, proper submodule of dimension vector $(r,v) \in \{0,1\} \times \N^I$. Then $\Bar{N} \coloneqq e_{\Bar{J}} \Bar{M}$ is a submodule of $\rho(Z)$, proper by Lemma \ref{lem:zero-generates}. By Lemma \ref{lem:dimofsubrep}, its dimension vector \[\dim(\Bar{N}) = (r,v|_J) \in \{0,1\} \times \N^J\] satisfies:
        \begin{itemize}
            \item If $r=0$, then $v_0 \delta\vert_J \leq v|_J \leq n \delta\vert_J$;
            \item if $r=1$, then $v_0 \delta\vert_J < v|_J < n \delta\vert_J$.
        \end{itemize}
        
        Hence, if $r = 0$, \[\theta_K(\dim(\Bar{M})) = v_0 \theta_K(\delta\vert_J) + \theta_K(v - v_0\delta\vert_J) > 0\] because $\theta_K(\delta\vert_J) > 0$, $v - v_0 \delta\vert_J$ has non-negative entries and is supported on $I \setminus \{0\}$, and $v \neq 0$.
        
        If $r = 1$ on the other hand, \[\theta_K(\dim(\Bar{M})) = \theta_K(0,v-n\delta) = \theta_K(v - v_0 \delta\vert_J) + \theta_K(v_0 \delta\vert_J - n \delta) > 0\] because $\theta_K(n \delta - v_0 \delta\vert_J) \leq \theta_K(n \delta) = nh$ and $\theta_K(v-v_0 \delta\vert_J) \geq \theta_K(\alpha_j) = nh$ for a $j \in J \setminus \{0\}$ where $v_j > v_0 \delta_j$, and one of these two inequalities is strict because $v \neq 0$.

        \item Since $\NQV_{\theta_K}(n \delta,\Lambda_0 )$ is a fine moduli space for $\theta_K$-stable modules, all we need to show is that the $H^0(\phi)$ from (\ref{prop:mor-to-nqv:1}) fit into a flat family over $\Quot_J^{n \delta}$. For the remainder of this proof we shorten $\Quot_J^{n \delta}$ to $\Quot$. Consider the diagram \[\begin{tikzcd}\Quot \times S_K \arrow[r,"q"] \arrow[d,"p"] & S_K \\ \Quot & \end{tikzcd}\] together with the universal quotient \[\phi^{\univ} \colon q^*h_* \mathcal{V} \twoheadrightarrow \Fb \] of $q^*h_*\Ealg$-modules, flat over $\Quot$. Applying $p_*$ gives a homomorphism \[(\Pi e_0)_{\Quot} \cong p_* q^* h_* \mathcal{V} \to p_* \Fb \] of $\Pi_{\Quot}$-modules. We define a $\Bar{\Pi}_{\Quot}$-algebra structure on $\Reg_{\Quot} \oplus p_* \Fb$ in the same way as we did pointwise, which gives us the required flat family. Indeed, since $p_* \Fb$ is locally-free over $\Quot$, for every $y \in \Quot$, the natural homomorphism of $\Bar{\Pi}$-modules $(\Reg_{\Quot} \oplus p_* \Fb)_y \cong H^0(\phi^{\univ}_y)$ is an isomorphism.
        
        \item This part is analogous to the proof of injectivity on closed points in Lemma \ref{lem:comm-diagram-cornered}. The complexes $Rh_*(\mathcal{V}) = Rh_*(\Ealg)e_0$ and $Rf_*(\mathcal{V}) = Rf_*(\Ealg)e_0$ are concentrated in degree zero by Corollary \ref{cor:vanishing}. Hence, \[Rg_*(h_* \mathcal{V}) = Rg_* Rh_* \mathcal{V} = Rf_* \mathcal{V} = f_* \mathcal{V} = g_*(h_* \mathcal{V}) \; ,\] i.e.\ $Rg_*(h_* \mathcal{V})$ is supported in degree $0$. The same is true for $Rg_* F$ because $F$ has zero-dimensional support. Hence, by Theorem \ref{thm:derived-equivalence}, we can reconstruct $\phi \colon h_* \mathcal{V} \twoheadrightarrow F$ uniquely from the morphism $g_* \phi \colon g_* h_* \mathcal{V} \to g_* F$. Since $\Sing$ is affine, the latter morphism is equivalent to the morphism $H^0(\phi) \colon \Pi e_0 \to H^0(F)$, or equivalently, the $\Bar{\Pi}$-module $H^0(\phi)$.\qedhere
    \end{enumerate}
\end{proof}

\begin{proof}[Proof of Theorem \ref{thm:isom-quot-nqv}]
    By combining (\ref{eq:quot-to-hilb}), Theorem \ref{thm:isomophHilbtoNQV}, and Proposition \ref{prop:mor-to-nqv}, we obtain the following diagram.
    \[\begin{tikzcd} \Quot_J^{n \delta} \arrow[r, "\nu"] \arrow[d, "e_0"] & \NQV_{\theta_K}(n \delta,\Lambda_0) \arrow[d] \\ \Hilb^n(S_K) \arrow[r, "\nu_J", "\sim"'] & \NQV_{\theta'_K}(n\delta, \Lambda_0 )\end{tikzcd}\]
    
    First, we show that the diagram commutes and that every morphism is projective and birational. We already know that all spaces involved are quasi-projective, $\nu_J$ is an isomorphism by Theorem \ref{thm:isomophHilbtoNQV}, $e_0$ is birational and proper by Proposition \ref{prop:quot-hilb-stack} and the VGIT morphism is birational and projective by \cite[Proposition 4.8]{bellamy_craw_birational-symplectic_20}. To see that the square commutes, it is enough to see that the two morphisms $\Quot_J^{n \delta} \to \NQV_{\theta'_K}(n\delta, \Lambda_0)$ agree after composition with the closed embedding $\NQV_{\theta'_K}(n\delta, \Lambda_0) \hookrightarrow \mathcal{M}_{\eta}(A_{\Bar{J}},(1, n\delta\vert_J))$. A point representing a quotient $\phi$ in $\Quot_J^{n \delta}$ is being mapped to the point in $\mathcal{M}_{\eta}(A_{\Bar{J}},(1, n\delta\vert_J))$ representing the cornered module associated to $\Hom_{S_K}(\mathcal{T}_J^{\vee}, e_0 \phi)$ by $\nu_J \circ e_0$, respectively $e_J H^0(\phi)$ along the other path around the square. These two modules are isomorphic, as
    \[\begin{split}
        \Hom_{S_K}(\mathcal{T}_J^{\vee}, e_0 \phi) & \cong H^0(\mathcal{T}_J^{\vee\!\vee} \otimes_{\Reg_{S_K}} e_0 \phi) \\
        & \cong H^0(\mathcal{T}_J \otimes_{\Reg_{S_K}} e_0 \phi) \\
        & \cong H^0(\mathcal{T}_J \otimes_{\Reg_{S_K}} e_0 h_*\!\Ealg \otimes_{h_*\!\Ealg} \phi) \\
        & \cong H^0(\mathcal{T}_J \otimes_{\Reg_{S_K}} \SHom_{S_K}(h_*\mathcal{V}, \Reg_{S_K}) \otimes_{h_*\!\Ealg} \phi) \\
        & \cong H^0(\SHom_{S_K}(h_*\mathcal{V}, \mathcal{T}_J) \otimes_{h_*\!\Ealg} \phi) \\
        & \cong H^0(e_J h_*\!\Ealg \otimes_{h_*\!\Ealg} \phi) \\
        & \cong H^0(e_J \phi) \cong e_J H^0(\phi) \; ,
    \end{split}\] where we used that $\mathcal{T}_J$ is locally-free in the first, second and fifth isomorphism. Since $\Quot_J^{n \delta}$ is reduced, this pointwise equality implies that the square commutes and it follows that $\nu$ is birational and projective as well.
    
    It remains to show that $\nu$ is an isomorphism. By the above and Proposition \ref{prop:mor-to-nqv}(\ref{prop:mor-to-nqv:4}), $\nu$ is projective, birational, and injective on closed points. Therefore, it must be surjective as well, because its image is closed and contains a dense open subset. It is then an isomorphism by Zariski's main theorem.
\end{proof}

\section{The nef and movable cones}\label{sec:cones}

We now describe the nef and movable cones of $\Hilb^n(\Sstack_K) \cong \Quot^{n\delta}_J$ and of $\Hilb^n(S_K)$. Since we have an isomorphism $\Quot^{n\delta}_J \cong \NQV_{\theta_K}(n\delta,\Lambda_0 )$ for $\theta_K \in C_K$, and $\NQV_{\theta_K}(n\delta,\Lambda_0 )$ is a fine moduli space, we identify the tautological bundles on both, and also denote the bundle on $\Quot^{n\delta}_J$ by $\mathcal{R}=\bigoplus_{i \in I} \mathcal{R}_i$ (this bundle $\mathcal{R}$ is the same as the one denoted $p_* \mathcal{F}$ in the proof of Proposition \ref{prop:mor-to-nqv}).

\begin{lemma}
    The movable cone of $\Quot^{n\delta}_J$ is equal to $\left\{ \bigotimes_{i \in I} \det(\mathcal{R}_i)^{\otimes \theta(\rho_i)} \mid \theta \in F \right\}$ and the ample cone is equal to $\left\{ \bigotimes_{i \in I} \det(\mathcal{R}_i)^{\otimes \theta(\rho_i)} \mid \theta \in C_K \right\}.$
\end{lemma}
\begin{proof}
    This is immediate from \cite[Proposition 6.1]{bellamy_craw_birational-symplectic_20}.
\end{proof}

The description of the Nef cone of $\Hilb^n(S_K)$ given in \cite{craw_wye_hilb-crepant-partial_25-v2} is somewhat unsatisfactory because it is phrased in terms of the tautological bundles on $\Quot^{n\delta}_J$. In light of the construction in section \ref{sec:geometric}, we can improve the statement of \cite[Theorem 1.3]{craw_wye_hilb-crepant-partial_25-v2}. 

\begin{theorem}\label{thm:NefHilb}
    For $J \coloneqq I \setminus K$, we have that:
    \begin{enumerate}
        \item[\one] the rational Picard group $\Pic(\Hilb^n(S_K))\otimes_\ZZ \QQ$ has basis $\{\det\big(\mathcal{T}_j^{[n]}\big) \mid j\in J\};$
        \item[\two] the nef cone is 
    \[
    \Nef\big(\Hilb^n(S_K)\big)=\Big\{\textstyle{\bigotimes_{j\in J} \det\big(\mathcal{T}_j^{[n]}\big)^{\otimes \eta(\rho_j)}} \mid \eta(\rho_j) \geq (n-1) \eta(\delta\vert_J)\geq 0 \text{ for }j\in J\setminus \{0\}\Big\};
    \]  
    \item[\three] the movable cone of $\Hilb^n(S_K)$ is the simplicial cone
   \[
    \Mov\big(\Hilb^n(S_K)\big)
     = 
     \Big\{\textstyle{\bigotimes_{j\in J} \det\big(\mathcal{T}_j^{[n]}\big)^{\otimes \eta(\rho_j)}} \mid 
     \eta(\rho_j) \geq 0 
     \text{ for }j\neq 0, \; 
     \eta(\delta\vert_J)\geq 0\Big\}
    \]
    with decomposition into Mori chambers determined by the hyperplanes $\{(m\delta_{J}- \alpha)^\perp \mid \alpha\in \Phi_{J \setminus \{0\}}^+, 0\leq m < n\}$, where $\Phi_{J \setminus \{0\}}^+$ is the set of positive roots of the root system of type ADE associated to $\Gamma$ that are supported in $J\setminus \{0\}$.
 \end{enumerate} 
\end{theorem}

The key observation is the relationship between the bundles $\mathcal{R}_j$ and $\mathcal{T}_j^{[n]}$. We have the following commutative diagram:
\begin{equation}
\label{eq:quothilbquiver}
   \begin{tikzcd} \Quot_J^{n \delta} \arrow[r, "\nu"] \arrow[d, "\xi"] & \NQV_{\theta_K}(n\delta,\Lambda_0 ) \arrow[d, "\Tilde{\lambda}"] \\ \Hilb^n(S_K) \arrow[r, "\nu_J"] & \NQV_{\theta'_K}(n\delta,\Lambda_0 ) \end{tikzcd}
\end{equation}
where the horizontal maps are isomorphisms, as in Theorem \ref{thm:isom-quot-nqv}.

\begin{lemma}
\label{lem:TjRj}
 For $j\in J$, the locally-free sheaves $\mathcal{T}_j^{[n]}$ on $\Hilb^n(S_K)$ and $\mathcal{R}_j$ on $\Quot^{n\delta}_J$ satisfy
 \[
 \xi^*(\mathcal{T}_j^{[n]})\cong \mathcal{R}_j.
 \]
\end{lemma}
 \begin{proof}
 We prove this statement by showing the two bundles on $\Quot^{n\delta}_J$ are pulled back from the same bundles on $\NQV_{\theta'_K}(n\delta,\Lambda_0 )$. For this, it is convenient to extend the diagram \eqref{eq:quothilbquiver} slightly to include the fine moduli space $\mathcal{M}_\eta(A_{\Bar{J}},(1, n\delta\vert_J))$ from section \ref{sec:geometric}:
\begin{equation}
\label{eq:quothilbquiverextended}
   \begin{tikzcd} \Quot_J^{n \delta} \arrow[r, "\nu"] \arrow[d, "\xi"] & \NQV_{\theta_K}(n\delta,\Lambda_0) \arrow[d, "\Tilde{\lambda}"] &\\ \Hilb^n(S_K) \arrow[r, "\nu_J"] & \NQV_{\theta'_K}(n\delta,\Lambda_0 ) \arrow[r, hook, "\varphi"]   & \mathcal{M}_\eta(A_{\Bar{J}},(1, n\delta\vert_J))
   \end{tikzcd}
\end{equation}
The fine moduli space $\mathcal{M}_\eta(A_{\Bar{J}},(1, n\delta\vert_J))$ carries a tautological bundle $\mathcal{U}= \bigoplus \mathcal{U}_j$. We show $\Tilde{\lambda}^*(\varphi^*(\mathcal{U}_j))\cong \mathcal{R}_j$ and $\mathcal{T}_j^{[n]}=\nu_J^*(\varphi^*(\mathcal{U}_j))$, thus showing $\xi^*(\mathcal{T}_j^{[n]}) \cong \nu^*\mathcal{R}_j$. Since we can identify the bundles on $\NQV_{\theta_K}(n\delta, \Lambda_0 )$ and on $\Quot^{n\delta}_J$, we will be done.

Each summand $\mathcal{U}_j$ is the descent to $\mathcal{M}_{\eta}(A_{\Bar{J}},(1, n\delta\vert_J))$ of the $G(1, n\delta\vert_J)$-equivariant locally-free sheaf $\varrho_j\otimes \mathcal{O}$ on the $\eta$-stable locus in $\Rep(A_{\Bar{J}},(1, n\delta\vert_J))$, where $\varrho_j\colon G(1, n\delta\vert_J)\to \GL(n\delta_j)$ is defined for each $j\in J$ by setting $\varrho_j(g)=g_j$ for any $g = (g_j)\in  G(1, n\delta\vert_J)$. The closed immersion $\varphi$ is defined in terms of a $G(1, n\delta\vert_J)$-equivariant closed immersion of representations schemes \cite{craw_yamagishi_hilb-can_26}, so $\varphi^*(\mathcal{U}_j)$ is the descent to $\NQV_{\theta'_K}(n\delta, \Lambda_0)$ of the $G(1, n\delta\vert_J)$-equivariant locally-free sheaf $\varrho_j\otimes \mathcal{O}$ on the $\theta'_K$-semistable locus in $\Rep(\Bar{\Pi},v)$. The morphism $\Tilde{\lambda}$ is a VGIT morphism, and so $\Tilde{\lambda}^*(\varphi^*(\mathcal{T}_j^{[n]}))$ is the descent of the $G(v)$-equivariant locally-free sheaf $\varrho_j\otimes \mathcal{O}$ restricted to the $\theta$-stable locus in $\Rep(\Bar{\Pi},v)$.  But this is the locally-free sheaf $\mathcal{R}_j$ on $\NQV_{\theta_K}(n\delta,\Lambda_0)$ by \cite[Proof of Lemma~3.11]{Craw23}, so $\Tilde{\lambda}^*(\varphi^*(\mathcal{U}_j))\cong \mathcal{R}_j$ for all $j\in J$.

The universal morphism $\nu' \colon \Hilb^n(S_K) \rightarrow \mathcal{M}_{\eta}(A_{\Bar{J}},(1, n\delta\vert_J))$ satisfies $\nu'^*(\mathcal{U}_j) = (\mathcal{F}_n)_j = \mathcal{T}_j^{[n]}$ by construction. Therefore, \[ \mathcal{T}_j^{[n]}=\nu'^*(\mathcal{U}_j)=(\varphi \circ \nu_J)^*(\mathcal{U}_j) = \nu_J^*(\varphi^*(\mathcal{U}_j)).\]
By commutativity of the diagram, $\nu^*(\Tilde{\lambda}^*(\varphi^*(\mathcal{U}_j))) = \xi^*(\nu_J^*(\varphi^*(\mathcal{U}_j)))$, so we can conclude $\nu^*\mathcal{R}_j \cong \xi^*(\mathcal{T}_j^{[n]})$ as required.
\end{proof}

Since $\nu$ and $\nu_J$ are isomorphisms, and $\xi$ is identified with $\Tilde{\lambda}$, we can equivalently say $\mathcal{R}_j\cong \Tilde{\lambda}^*\big(\mathcal{T}_j^{[n]}\big)$ as locally-free sheaves on $\NQV_{\theta_K}(n\delta,\Lambda_0 )$.

\begin{proof}[Proof of Theorem \ref{thm:NefHilb}]
    The proof now follows analogously to Theorem 1.3 of \cite{craw_wye_hilb-crepant-partial_25-v2}. For completeness, we provide an explanation. Part \one\ follows once we prove part \two, as the nef cone is top-dimensional in $\Pic(\Hilb^n(S_K))\otimes_\ZZ \QQ$. By the proof of Theorem 1.3 of \cite{craw_wye_hilb-crepant-partial_25-v2}, the map $\Tilde{\lambda}^*$ on rational Picard groups defines an equality
 \[
 \Tilde{\lambda}^*\big(\Nef(\NQV_{\theta'_K}(n\delta,\Lambda_0 ))\big) = \Big\{\bigotimes_{j\in J}\det(\mathcal{R}_j)^{\otimes \eta(\rho_j)} \mid \eta(\rho_j)\geq (n-1)\eta(\delta\vert_J)\geq 0 \text{ for }j\in J\setminus \{0\}\Big\}.
 \]
 Substitute $\mathcal{R}_j\cong \Tilde{\lambda}^*\big(\mathcal{T}_j^{[n]}\big)$ from Lemma~\ref{lem:TjRj} into the right-hand-side. Since $\Tilde{\lambda}^*$ is injective and pullback commutes with taking determinants and tensor products, we obtain 
 \[
 \Nef\!\big(\NQV_{\theta'_K}(n\delta,\Lambda_0 )\big) = \Big\{\bigotimes_{j\in J}\det\big(\mathcal{T}_j^{[n]}\big)^{\otimes \eta(\rho_j)} \mid \eta(\rho_j)\geq (n-1)\eta(\delta\vert_J)\geq 0 \text{ for }j\in J\setminus \{0\}\Big\}.
 \]
 For part \three, let $F'$ denote the minimal face of $F$ containing $\sigma_K$. Again, the argument from the proof of Theorem 1.3 from \cite{craw_wye_hilb-crepant-partial_25-v2} shows that the pullback of the movable cone of $\M_{\theta'_K}(n\delta,\Lambda_0 )$ is isomorphic to
\[
 L_{C_K}(F') = \Big\{\bigotimes_{i \in I}\det(\mathcal{R}_i)^{\otimes \vartheta_i} \mid \vartheta\in F'\Big\}.
 \] 
 The argument from part \two\ shows that we can replace the $\mathcal{R}_j$ with $\Tilde{\lambda}^*(\mathcal{T}_j^{[n]})$, so the movable cone is indeed the decomposition of  
   \[
    \Mov\big(\Hilb^n(S_K)\big)
     = 
     \Big\{\textstyle{\bigotimes_{j\in J} \det\big(\mathcal{T}_j^{[n]}\big)^{\otimes \eta(\rho_j)}} \mid 
     \eta(\rho_j) \geq 0 
     \text{ for }j\neq 0, \; 
     \eta(\delta\vert_J)\geq 0\Big\}
    \]
    determined by the hyperplanes in $\mathcal{A}$ that intersect $\relint(F')$; these are precisely   
    $(m\delta\vert_J- \alpha)^\perp$ for $\alpha\in \Phi_{J \setminus \{0\}}^+$ and $0\leq m < n$, where $\Phi_{J \setminus \{0\}}^+$ is the set of positive roots supported in $J\setminus \{0\}$.
\end{proof}

Theorem~\ref{thm:NefHilb} holds even for $n=1$, but in that case, the integral functor $p_*q^*(-)$ is the identity, so the line bundle $\det(\mathcal{T}_0^{[1]})\cong \mathcal{T}_0$ on $\Hilb^1(S_K)\cong S_K$ coincides with $\mathcal{O}_{S_K}$.

\bibliographystyle{amsplain}
\bibliography{References}

\end{document}